\newtheorem{thm}{Theorem}[section]
\newtheorem{cor}[thm]{Corollary}
\newtheorem{prop}[thm]{Proposition}
\newtheorem{lem}[thm]{Lemma}
\newtheorem{rem}[thm]{Remark}
\newtheorem{defn}[thm]{Definition}
\newtheorem{exa}[thm]{Example}
\DeclareMathOperator{\diam}{diam}
\numberwithin{equation}{section}
\numberwithin{equation}{section}
\newcommand{\N}{\mathbb{N}}
\newcommand{\K}{\mathscr{K}}
\newcommand{\Z}{\mathbb{Z}}
\newcommand{\Zp}{\mathbb{N}_0}
\newcommand{\R}{\mathbb{R}}
\newcommand{\set}[1]{\{#1\}}
\newcommand{\eps}{\varepsilon}
\newcommand{\F}{\mathcal{F}}
\newcommand{\M}{\mathscr{M}}
\begin{document}

\title[On average error in tracing]{On various definitions of shadowing with average error in tracing}

\author{Xinxing Wu}
\address{School of Mathematics, University of Electronic
Science and Technology of China, Chengdu, Sichuan, 611731, People's
Republic of China} \email{wuxinxing5201314@163.com}
\author{Piotr Oprocha${}^*$}
\address{Faculty of Applied Mathematics, AGH University of Science and
Technology, al. A. Mickiewicza 30, 30-059, Krak\'{o}w, Poland}
\email{oprocha@agh.edu.pl}
\author{Guanrong Chen}
\address{Centre for Chaos and Complex Networks, City University of Hong Kong, Hong Kong SAR, People's
Republic of China} \email{eegchen@cityu.edu.hk}
\thanks{${}^*$Piotr Oprocha (Corresponding Author).}

\subjclass[2010]{Primary 37B05; Secondary 54H20, 37B20, 37D45, 37C50.}

\keywords{$\M^{\alpha}$-shadowing property, $\M_{\alpha}$-shadowing property,
(asymptotic) average shadowing property, weak asymptotic average shadowing property,
ergodic pseudo-orbit, chain mixing.}
\maketitle
\centerline{(Communicated by the associate editor name)}
\begin{abstract}
In this paper we present a systematic study of shadowing properties with average error in tracing such as
(asymptotic) average shadowing, $\underline{d}$-shadowing, $\overline{d}$-shadowing and almost specification.
As the main tools we provide a few equivalent characterizations of the average shadowing property,
which also partly apply to other notions of shadowing. We prove that almost specification on the whole space induces this property on the measure center.
Next, we show that always (e.g. without assumption that the map is onto) almost specification implies asymptotic average shadowing,
which in turn implies the average shadowing property and consequently also $\underline{d}$-shadowing and $\overline{d}$-shadowing. Finally, we study connections among sensitivity, transitivity, equicontinuity and (average) shadowing.
\end{abstract}

\section{Introduction}

The theory of shadowing provides tools for fitting real trajectories nearby to approximate trajectories.
The motivation comes from computer simulations, where we always have a numerical error when calculating a trajectory, no matter how small,
but at the same time we want to be sure that what we see on the computer screen is a good approximation of the genuine orbit of the system.
It is a classical notion, which originated from works of Anosov, Bowen and others (see \cite{PilBook} for historical remarks and more recent advances).
Lately, some equivalent conditions for expansive homeomorphisms having the shadowing
property are obtained, e.g. see \cite{KO12, LS2005, PilBook, Sakai2001, Sakai2003}. For example, Lee and Sakai compared various shadowing properties for
(positively) expansive maps in \cite{LS2005, Sakai2001, Sakai2003} and proved that the continuous shadowing property, the
Lipschitz shadowing property, the limit shadowing property and the strong shadowing
property are all equivalent to the (usual) shadowing property for expansive homeomorphisms on compact metric spaces (see also \cite{KO12}).

But it is also easy to imagine situations where we cannot provide an
exact bound for the error in each step of computing the orbit; however, we can ensure that
the average error in the long run is small. This was the motivation of Blank, who introduced
the notion of the average shadowing property (see \cite{Blank1988, Blank1988-1}) and
proved that $f|_{\Lambda}$ has the average shadowing property provided that
$\Lambda$ is a basic set of a diffeomorphism $f$ satisfying Axiom A.
First examples of dynamical systems with the average shadowing property were obtained on
manifolds in the class of diffeomorphisms \cite{Blank1988, Sakai} (and their random perturbations), although this property is much more common.
By results recently published in \cite{KO2010, KO11}, the average shadowing property is present in all mixing interval maps, mixing sofic shifts and all $\beta$-shifts (symbolic descriptions of $\beta$-transformations). In fact, all these systems have the so-called almost specification property, which is a generalization of Bowen's specification property in terms of average tracing. This concept was introduced by Pfister and Sullivan \cite{PS}, and then extended and studied by Climenhaga and Thompson \cite{CT2013} and many others (e.g. see also \cite{CT2012,T,Y09} or \cite{OT}).
The asymptotic average shadowing property was introduced
by Gu \cite{Gu2007-1}, which followed the same framework as Blank in \cite{Blank1988},
but with the limit shadowing property in place of the shadowing property as the starting point for generalization.
Meanwhile, he showed that every surjective dynamical system with asymptotic average shadowing
is chain transitive and every $\mathcal{L}$-hyperbolic homeomorphism with asymptotic average shadowing
is topologically transitive. Then, Honary and Bahabadi \cite{HB2008} proved that
for a dynamical system $(X, f)$, if the chain recurrent set $R(f)$ has more than one chain
component, then $f$ does not have the asymptotic average shadowing property and that
$R(f)$ is the single attractor for $f$ provided that $f$ has the asymptotic average shadowing property.

In some sense, parallel approach to average tracing was initiated in \cite{DH2010} (see also \cite{FG2010}),
where the authors introduced the concept of partial shadowing.
This concept was further developed in \cite{ODH2014}. The main idea is to use Furstenberg families to specify the minimal requirements on the set
of iterations, where a proper $\eps$-tracing takes place. In \cite{ODH2014}, it was also observed that some particular cases of this kind of shadowing can be induced by the average shadowing property.

The present work is inspired by the concepts and results from the papers mentioned above
and is organized as follows.
First, we review some standard notions to be used in the paper. In Section~\ref{sec:Ma}, we
analyze basic properties of $\M_\alpha$ and $\M^\alpha$-shadowing properties (see Definition~\ref{def:F-shadowing}),
which are generalizations of $\underline{d}$-shadowing and $\overline{d}$-shadowing considered in \cite{DH2010} and are special types of $\F$-shadowing from \cite{ODH2014}. In this section we prove that both $\M_\alpha$ and $\M^\alpha$-shadowing properties are preserved by higher iterations of the map
(see Theorem~\ref{M_aiterations}),
which also shows that every surjective dynamical system with $\overline{d}$-shadowing is chain mixing (see
Corollary~\ref{Corollary XX}).

We focus on some natural problems form \cite{KKO14}, which shows that the almost specification property implies
asymptotic average shadowing, consequently implies average shadowing, provided
that the dynamical system is surjective (or even chain mixing).
It is also proved in \cite{KKO14} that if a dynamical system has the average shadowing property, asymptotic average shadowing
property or almost specification property, when restricted at the measure center, then it also satisfies the respective property on the whole space. Then \cite[Question 10.1]{KKO14} asks if the converse is true and the authors also mention that if the answer is positive then the assumption
of surjection can be removed when proving implications between these properties.

In the present paper, we provide only a partial answer to \cite[Question 10.1]{KKO14}. However, we
prove that the assumption of surjection is not essential, that is, the first result mentioned above holds also
in the general case. Precisely speaking, in Theorem~\ref{Theorem 5.1} we show that the
asymptotic average shadowing property implies a property which we call weak asymptotic
average shadowing property. Then, in Section~\ref{sec:equivASP}, we show that the weak asymptotic
average shadowing property in fact coincides with the average shadowing property, and that it can be
equivalently characterized in the language of the $\M_\alpha$-shadowing property (see Theorem~\ref{Theorem 6.5}).

Theorem~\ref{Theorem 7.2} indicates that if $A$ is a closed invariant set containing the measure center then the $\mathscr{M}_{\alpha}$-shadowing property
of $(A, f|_{A})$ guarantees the same property for $(X, f)$.
Theorem~\ref{mc:almost_s} shows that the almost specification property of $(X,f)$ implies almost specification of $(A,f|_A)$, answering (partly) \cite[Question 10.1]{KKO14}. In particular, this implies (by results of \cite{KKO14})
that the almost specification property always implies the asymptotic average shadowing property (it does not matter if $f$ is surjective or not).

The paper ends with Section~\ref{sec:sens}, where we prove that $\underline{d}$-shadowing and $\overline{d}$-shadowing  properties
are strongly related with transitivity and sensitivity. In particular, there are no interesting equicontinuous examples of systems with these properties.
It is also worth recalling at this point that the usual version of the shadowing property is present in all equicontinuous systems on any totally disconnected space.

\section{Preliminaries}
The set of real numbers, integers, natural numbers and nonnegative
integers are denoted, respectively, by $\R$, $\Z$, $\N=\Z \cap
(0,+\infty)$ and $\Zp=\N \cup \set{0}$. If $A$ is a set, then its
complement is denoted $A^c$ and its closure $\overline{A}$.
Cardinality of a set $A$ is denoted $|A|$.
If $A\subset \Zp$ and $j\in \Zp$ then the standard notation will be used
\begin{eqnarray*}
A+j&=&\left\{i+j: i\in A\right\}, \\
A-j&=&\left\{i-j: i\in
A\right\}\cap \Zp,\\
j\cdot A& =& \left\{j\cdot i: i\in A\right\}.
\end{eqnarray*}
Clearly, for any choice of sets $A_{1},\ldots, A_n\subset \Zp$, any $n\in \N$ and any $k\in \Zp$,
\begin{equation}\label{*}
\bigcap_{i=1}^{n}(A_{i}+k)=\left(\bigcap_{i=1}^{n}A_{i}\right)+k.
\end{equation}
and
\begin{equation}\label{**}
\begin{split}
|A_1\cap A_2 \cap\{0, \ldots, n-1\}|&=|A_1\cap\{0, \ldots, n-1\}|
+|A_{2} \cap\{0, \ldots, n-1\}|\\
&-|\left(A_1\cup A_2\right) \cap\{0, \ldots, n-1\}|.
\end{split}
\end{equation}

A set $A\subset\Zp$ is \emph{syndetic} if it has bounded gaps, i.e.,
if there is $k>0$ such that $A\cap [i,i+k]\neq \emptyset$ for all $i\in \Zp$.
The family of all syndetic sets is denoted $\F_s$.

\subsection{Topological dynamics}
A \emph{dynamical system} is a pair $(X, f)$ consisting of a compact metric space
$(X, d)$ and a continuous map $f\colon X\longrightarrow X$.
A dynamical system $(Y, g)$ is a {\it factor} of $(X, f)$ if
there is a continuous surjection $\pi: X\longrightarrow Y$ such that $\pi\circ f= g\circ \pi$.

A set $M\subset X$ is \emph{minimal} if it does not have closed, nonempty and invariant subsets other than itself. The set of minimal points (elements of minimal sets) in $(X,f)$ is denoted $M(f)$.


For any open sets $U, V$, define \emph{the set of transfer times} by
$N_f(U, V)=\set{n\in \Zp : f^n(U) \cap V\neq \emptyset}$.
Similarly, for any $x\in X$, let $N_f(x,V)=\set{n\in \Zp : f^n(x) \cap V\neq \emptyset}$.
When the map $f$ is clear from the context,
we simply write $N(U, V)$ and $N(x, V)$.

The \emph{orbit} of $x\in X$ is the set $\left\{f^{n}(x): n\in \Zp\right\}$. We say that
$f$ is \emph{minimal} if the orbit of every point $x\in X$ is dense in $X$. It is easy
to see that $f$ is minimal if and only if $X$ has no proper, nonempty, closed invariant subset.
The map $f$ is \emph{transitive} (resp. \emph{syndetically transitive})
if, for any pair of nonempty open subsets  $U, V\subset X$, $N_{f}(U, V)\neq \emptyset$
(resp., $N_{f}(U, V)\in \F_{s}$). We say that $f$ is \emph{totally transitive}
if $f^{n}$ is transitive for all $n\in \mathbb{N}$. The map $f$
is \emph{weakly mixing} if $f\times f$ is transitive on $X\times X$.

The map $f$ is \emph{equicontinuous} if, for any $\eps>0$, there exists a $\delta>0$
such that for all $x, y\in X$ with $d(x, y)<\delta$ and all $n\in \Zp$, $d(f^{n}(x),
f^{n}(y))<\eps$.

For $U\subset X$ and $\delta>0$, denote
$$
N_{f}(U, \delta)=\left\{n\in \Zp: \text{there exist } y, z\in U \text{ such that } d(f^{n}(y),
f^{n}(z))>\delta \right\}.
$$

The map $f$ is \emph{sensitive} (resp. \emph{syndetically sensitive}) if there
exists $\delta>0$ such that for every nonempty open subset $U\subset X$, $N_{f}(U, \delta)\neq \emptyset$ (resp. $N_{f}(U, \delta)\in \F_{s}$).

Let $M(X)$ denote the space of all Borel probability measures on $X$.
A measure $\mu\in M(X)$ is {\it invariant} for $f: X\longrightarrow X$ if
$\mu(A) =\mu(f^{-1}(A))$ for any Borel set $A\subset X$.
The classical Krylov-Bogolyubov theorem implies that every compact
dynamical system $(X, f)$ has at least one such measure.

A subset $A\subset X$ is {\it measure saturated} if, for every open set $U$ satisfying
$U\cap A\neq \emptyset$, there exists an invariant measure $\mu$ such that $\mu(U)>0$.
The {\it measure center of $f$} is the largest measure saturated subset.

\subsection{Average tracing of approximate trajectories}

Let $\set{x_i}_{i=0}^\infty$, $\set{y_i}_{i=0}^\infty\subset X$
and fix any $\eps>0$.
We define
\begin{eqnarray*}
\Lambda(\set{x_i}_{i=0}^\infty, \set{y_i}_{i=0}^\infty,f ,
\eps)&:=&\left\{i\in \Zp : d(x_{i}, y_{i})<\eps
\right\},\\
\Lambda^c(\set{x_i}_{i=0}^\infty, \set{y_i}_{i=0}^\infty, f ,\eps)&:=&\Zp \setminus
\Lambda(\set{x_i}_{i=0}^\infty, \set{y_i}_{i=0}^\infty,f ,\eps)
\\
&=& \set{i\in \Zp : d(x_{i}, y_{i})\geq \eps}.
\end{eqnarray*}
When the map $f$ is clear from the context, we simply write
$\Lambda(\set{x_i}_{i=0}^\infty, \set{y_i}_{i=0}^\infty,\eps)$ and $\Lambda^c(\set{x_i}_{i=0}^\infty, \set{y_i}_{i=0}^\infty,\eps)$.
Similarly, we use the following simplified notation (for both
$\Lambda$ and $\Lambda^c$):
\begin{eqnarray*}
\Lambda(\set{x_i}_{i=0}^\infty, f ,\eps)&:=&\Lambda(\set{x_{i+1}}_{i=0}^\infty,
\set{f(x_i)}_{i=0}^\infty, f ,\eps)\\
&=&\set{i\in \Zp : d(f(x_{i}), x_{i+1})<\eps},\\
\Lambda^{c}(\set{x_i}_{i=0}^\infty, f ,\eps)&:=&\Zp
\setminus \Lambda(\set{x_{i+1}}_{i=0}^\infty, f ,\eps)
\\
&=& \set{i\in \Zp : d(f(x_{i}), x_{i+1})\geq\eps},\\
\Lambda(z,\set{x_i}_{i=0}^\infty,f ,\eps)&:=&\Lambda
(\set{f^i(z)}_{i=0}^\infty,\set{x_i}_{i=0}^\infty,f ,\eps)\\
&=& \set{i\in \Zp : d(f^i(z), x_{i})<\eps},\\
\Lambda^c(z,\set{x_i}_{i=0}^\infty,f ,\eps)&:=&\Zp \setminus
\Lambda (z,\set{x_i}_{i=0}^\infty,f ,\eps)\\
&=& \set{i\in \Zp : d(f^i(z), x_{i})\geq \eps}.
\end{eqnarray*}
Finally, we will denote finite blocks in the above sets by
\begin{eqnarray*}
\Lambda_n(\set{x_i}_{i=0}^\infty, \set{y_i}_{i=0}^\infty,f ,\eps)&:=
&[0,n)\cap \Lambda(\set{x_i}_{i=0}^\infty, \set{y_i}_{i=0}^\infty,f ,\eps), \\
\Lambda_n^c(\set{x_i}_{i=0}^\infty, \set{y_i}_{i=0}^\infty,f ,\eps)&:=
&[0,n)\cap \Lambda^c(\set{x_i}_{i=0}^\infty, \set{y_i}_{i=0}^\infty,f ,\eps).\\
\end{eqnarray*}

\begin{defn}
Let $\delta>0$ and let $\xi=\set{x_i}_{i=0}^\infty\subset X$.
We say that $\xi$ is
\begin{enumerate}
\item\label{2.1.1} a \emph{$\delta$-ergodic pseudo-orbit} (of $f$) if
$$
\lim_{n\rightarrow\infty}\frac{1}{n} \left|\Lambda^{c}_{n}(\xi, f,
\delta)\right| =0;
$$

\item\label{2.1.2}
a \emph{$\delta$-average-pseudo-orbit} (of $f$) if there exists $N>0$
such that for all $n\geq N$ and $k\in \Zp$,
$$
\frac{1}{n}\sum_{i=0}^{n-1}d(f(x_{i+k}), x_{i+k+1})<\delta;
$$

\item\label{2.1.3}
a \emph{$\delta$-asymptotic-average-pseudo-orbit} (of $f$) if
$$
\limsup_{n\rightarrow \infty}\frac{1}{n}\sum_{i=0}^{n-1}d(f(x_{i}), x_{i+1})<\delta;
$$

\item\label{2.1.4}
an \emph{asymptotic average pseudo-orbit} (of $f$) if
$$
\lim_{n\rightarrow \infty}\frac{1}{n}\sum_{i=0}^{n-1}d(f(x_{i}), x_{i+1})=0.
$$
\end{enumerate}
\end{defn}

We use the above notions of approximate trajectories to define
three main shadowing properties of the paper.

\begin{defn}
A dynamical system $(X,f)$ has
\begin{enumerate}
\item\label{2.2.1} the \emph{average shadowing property (abbrev. ASP)} if, for
any $\eps>0$ there exists $\delta>0$ such that every
$\delta$-average-pseudo-orbit $\set{x_i}_{i=0}^\infty$ is
$\eps$-shadowed on average by a point $z\in X$, i.e.
$$
\limsup_{n\rightarrow\infty}\frac{1}{n}\sum_{i=0}^{n-1}d(f^{i}(z), x_{i})<\eps;
$$
\item\label{2.2.2} the \emph{asymptotic average shadowing property (abbrev. AASP)}
if every asymptotic average pseudo-orbit $\set{x_i}_{i=0}^\infty$ is
asymptotically shadowed on average by a point $z\in X$, i.e.
$$
\lim_{n\rightarrow\infty}\frac{1}{n}\sum_{i=0}^{n-1}d(f^{i}(z), x_{i})=0;
$$
\item\label{2.2.3} the \emph{weak asymptotic average shadowing
property} if, for any $\varepsilon>0$ and any asymptotic average
pseudo-orbit $\left\{x_{i}\right\}_{i=0}^{\infty}$, there exists
$z\in X$ such that
$$
\limsup_{n\rightarrow\infty}\frac{1}{n}\sum_{i=0}^{n-1}d(f^{i}(z),
x_{i})<\varepsilon.
$$
\end{enumerate}
\end{defn}

A \emph{$\delta$-chain} from $x$ to $y$ is a finite $\delta$-pseudo-orbit between these points, that is,
a sequence $x_1,\ldots, x_{n+1}$ such that $d(f(x_i),x_{i+1})<\delta$ for all $i=1, \ldots, n$, and $x_1=x$, $x_{n+1}=y$.
A map is \emph{chain transitive} if, for any $\delta>0$ and any two points $x,y\in X$ there is a $\delta$-chain from $x$ to $y$.
Chain transitivity is a natural generalization of transitivity. It is clear that if a map is chain transitive
then it must be surjective as well. There is a surprising result \cite{RW2008} which shows that chains do not distinguish between totall
transitivity and mixing. Precisely  speaking, if $(X,f^n)$ is chain transitive for all $n>0$ then it is \emph{chain mixing},
that is, for any $x,y\in X$ and $\delta>0$ there is $N>0$ such that there is a $\delta$-chain from $x$ to $y$ consisting of exactly $n$
elements for every $n>N$.

\subsection{Furstenberg families and tracing}

\emph{A (Furstenberg) family} $\F$ is a collection of subsets of
$\Zp$ which is \emph{upwards hereditary}, that~is
$$
F_1 \in \F \text{ and } F_1 \subset F_2
\quad \Longrightarrow \quad F_2 \in \F.
$$
The {\it dual family} of $\F$ is
$$
\F^*:=\left\{A\subset\Zp :\ \forall \ F\in\F,\ A\cap
F\neq\emptyset\right\}.
$$
A set $A\subset\Zp$ is {\it syndetic} if it has bounded gaps, i.e.
there is $k>0$ such that $A\cap \left[i, i+k\right)\neq \emptyset$
for all $i\geq 0$ and \emph{thick} if it belongs to the dual
family $\F_t=\F^*_s$. Note that a set is thick if it contains
arbitrarily long block of consecutive integers.

For any $A\subset\Zp $, the {\it upper density} of $A$ is defined by

\begin{equation}\label{def:d-}
\overline{d}(A):=\limsup_{n\rightarrow\infty}\frac{1}{n}
\left|A\cap\{0,\,1,\ldots,\,n-1\}\right|.
\end{equation}
Replacing $\limsup$ with $\liminf$ in \eqref{def:d-} gives the
definition of $\underline{d}(A)$, the \emph{lower density} of $A$.
If there exists a number $d(A)$ such that
$\overline{d}(A)=\underline{d}(A)=d(A)$ then we say that the set \emph{$A$ has density $d(A)$}. Fix any $\alpha\in [0,1)$
and denote by $\M_\alpha$ (resp. $\M^\alpha$) the family consisting
of sets $A\subset\Zp$ with $\underline{d}(A)> \alpha$ (resp.
$\overline{d}(A)>\alpha$). We denote by $\hat{\M}_\alpha$ the family
of sets with $\underline{d}(A)\geq \alpha$. Clearly
$\hat{\M}_1$ consists of sets $A$ with $d(A)=1$.
\begin{defn}\label{def:F-shadowing}
A dynamical system $(X,f)$ has \emph{(ergodic) $\F$-shadowing
property} if, for any $\eps>0$ there is $\delta>0$ such that every
$\delta$-ergodic pseudo-orbit $\xi$ is $\F$-$\eps$-shadowed by some
point $z\in X$, i.e.
$$
\Lambda(z,\xi,\eps)\in \F.
$$
In the special case of $\F=\hat{\M}_1$ (resp.,
$\F=\mathscr{M}_{0}$ and $\mathscr{M}^{1/2}$), we say that $(X,f)$ has
the \emph{ergodic shadowing property} (resp.,
\emph{$\underline{d}$-shadowing property} and
\emph{$\overline{d}$-shadowing property}).
\end{defn}

\subsection{The (almost) specification property}

The specification property was first introduced by Bowen \cite{BowenSpec}. It is one of the strongest mixing properties that can be expected from a dynamical system.
A  dynamical system $(X,f)$ has the \emph{strong specification property},
if for any $\eps > 0$ there is a positive integer $M$
such that for any integer $s\geq 2$, any set $\set{y_1,\dots,y_s}$
of $s$ points in $X$, and any sequence $0=j_1\leq k_1 < j_2 \leq k_2
< \dots < j_s \leq k_s$ of $2s$ integers satisfying $j_{m+1} - k_m\geq
M$ for $m= 1,\dots,s-1$, we can find a point $x\in X$ such
that for each positive integer $m\leq s$ and all integers $i$ satisfying
$j_m \leq i \leq k_m$, the following conditions hold:
\begin{eqnarray}
d(f^i(x),f^i(y_m))\!\! &<&\!\! \eps, \label{cond:psp1}\\
 f^n (x)\!\! &=&\!\! x, \;\; \textrm{ where }\; n=M+
k_s.\label{cond:psp2}
\end{eqnarray}
If the only guaranteed condition is \eqref{cond:psp1} (but not necessarily \eqref{cond:psp2}), then
we say that $(X,f)$ has the \emph{specification property}.

Recently, Pfister and Sullivan introduced in \cite{PS} a property called the $g$-almost product property, which generalizes Bowen's specification in terms of average tracing. Inspired by \cite{PS}, Thompson in \cite{T} modified slightly this
definition and proposed to call it the almost specification property,
which in turn generalizes the notion of specification.
In this paper, we adopt the concepts of \cite{T}. First, we introduce some auxiliary notation.

Let $\eps_0 > 0$. A function $g\colon \Zp \times (0, \eps_0] \longrightarrow \N$ is
called a \emph{mistake function} if, for all $\eps\in(0, \eps_0]$ and all $n \in \Zp$,
 we have $g(n,\eps) \leq  g(n+1,\eps)$ and
$$
\lim_{n\to \infty}
\frac{g(n, \eps)}{n}= 0.
$$
Given a mistake function $g$, if $ \eps > \eps_0$, then we define $g(n, \eps) = g(n, \eps_0)$.

For $n$ sufficiently large satisfying $g(n, \eps) < n$, we
define the \emph{set of $(g; n, \eps)$ almost full subsets of $\set{0,\ldots,n-1}$}
as the family $I(g; n, \eps)$ consisting of subsets of $\set{0,1,\ldots,n-1}$
with at least $n - g(n,\eps)$ elements, that is,
$$
I(g; n, \eps) := \left\{A\subset \set{0,1,\ldots, n - 1} :
|A| \geq n - g(n,\eps)\right\}.
$$

For a finite set of indices $A\subset \set{0,1,\ldots, n - 1}$, we define the \emph{Bowen distance between $x,y\in X$ along $A$} by
$d_A(x, y) = \max\set{d(f^j(x), f^j(y)) : j \in A}$ and the \emph{Bowen ball (of radius $\eps$ centered at $x\in X$) along $A$}
by $B_A(x, \eps) = \set{y \in X : d_A(x, y) < \eps}$.
When $g$ is a mistake function and $(n,\eps)$ is such that $g(n,\eps) < n$, we define for $x\in X$ a \emph{$(g;n,\eps)$-Bowen ball of radius $\eps$, center $x$, and length $n$} 
by
$$
B_n(g; x, \eps) := \bigg\{y \in X : y \in B_A(x, \eps) \text{ for some }A\in I(g;n,\eps)\bigg\}
=
\bigcup_{A\in I(g;n,\eps)}
B_A(x, \eps).
$$

Using the above notation, we are able to present the definition of the almost specification property.

\begin{defn}
A dynamical system $(X,f)$ has the \emph{almost specification
property} if there exists a mistake function $g$ and a function $k_g\colon (0,\infty)\longrightarrow \N$ such that
for any $m\geq 1$, any $\eps_1,\ldots,\eps_m > 0$, any points $x_1, \ldots, x_m \in X$, and any integers
$n_1 \geq k_{g}(\eps_1),\ldots,n_m \geq k_{g}(\eps_m)$ setting $n_0=0$ and
$$
l_j=\sum_{s=0}^{j-1}n_s,\,\text{for }j=1, \ldots, m,
$$
one can find a point $z\in X$ such that for every $j=1, \ldots, m$,
$$
f^{l_j}(z)\in B_{n_j}(g;x_j,\eps_j).
$$
In other words,
the appropriate part of the orbit of $z$, $\eps_j$-traces with at most $g(\eps_j,n_j)$, mistakes the orbit of $x_j$,
 $j=1, \ldots, m$.
\end{defn}

\section{$\M^{\alpha}$ and $\M_{\alpha}$-shadowing properties}\label{sec:Ma}
In this section, we prove that both $\mathscr{M}^{\alpha}$ and
$\mathscr{M}_{\alpha}$ are preserved under iterations. As a corollary,
we show that $\overline{d}$-shadowing implies chain mixing under
the assumption of surjection.

The following proposition has a simple proof, which we leave to the reader.

\begin{prop}\label{Proposition 4.1}
If $(X, f)$ is topologically conjugate to $(Y, g)$ then $f$ has $\mathscr{M}^{\alpha}$-shadowing
property or $\mathscr{M}_{\alpha}$-shadowing property for some $\alpha\in \left[0, 1\right)$,
if and only if $g$ does so.
\end{prop}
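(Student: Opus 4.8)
The plan is to exploit the fact that topological conjugacy only rescales the metric in a uniformly controlled way, so that $\delta$-ergodic pseudo-orbits and $\eps$-tracings are carried back and forth between the two systems with only a change in the constants $\delta,\eps$, while the combinatorial condition ``$\Lambda(z,\xi,\eps)\in\F$'' is left completely untouched because $\F$ ($=\M^\alpha$ or $\M_\alpha$) depends only on the set of indices, not on the underlying dynamics. Since the $\M_\alpha$- and $\M^\alpha$-shadowing properties are both instances of ergodic $\F$-shadowing (Definition~\ref{def:F-shadowing}) for a fixed family $\F$, it suffices to prove the invariance of ergodic $\F$-shadowing under conjugacy for an arbitrary Furstenberg family $\F$; the two cases in the statement then follow at once. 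By symmetry of the conjugacy relation we only need to prove one implication.

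First I would fix a conjugacy $\pi\colon X\to Y$ with $\pi\circ f=g\circ\pi$ and $\pi$ a homeomorphism. Since $X$ is compact, both $\pi$ and $\pi^{-1}$ are uniformly continuous: for every $\eps>0$ there is $\gamma(\eps)>0$ such that $d_X(a,b)<\gamma(\eps)\implies d_Y(\pi(a),\pi(b))<\eps$, and likewise for $\pi^{-1}$ with modulus $\gamma'(\eps)$. Assume $(X,f)$ has $\F$-shadowing. Given $\eps>0$, apply uniform continuity of $\pi$ to get $\eps'=\gamma'(\eps)$, then use the $\F$-shadowing of $f$ at level $\eps'$ to obtain a $\delta'>0$; finally set $\delta=\gamma(\delta')$. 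Now take any $\delta$-ergodic pseudo-orbit $\eta=\{y_i\}_{i=0}^\infty$ of $g$ and push it through $\pi^{-1}$: set $x_i=\pi^{-1}(y_i)$. The key computation is that $d_Y(g(y_i),y_{i+1})<\delta=\gamma(\delta')$ forces $d_X(f(x_i),x_{i+1})=d_X(\pi^{-1}(g(y_i)),\pi^{-1}(y_{i+1}))<\delta'$ (using $\pi^{-1}\circ g=f\circ\pi^{-1}$), hence $\Lambda^c(\{x_i\},f,\delta')\subset\Lambda^c(\{y_i\},g,\delta)$, so $\xi=\{x_i\}$ is a $\delta'$-ergodic pseudo-orbit of $f$. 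By hypothesis there is $z\in X$ with $\Lambda(z,\xi,\eps')\in\F$. Put $w=\pi(z)$; then $d_X(f^i(z),x_i)<\eps'=\gamma'(\eps)$ implies $d_Y(g^i(w),y_i)=d_Y(\pi(f^i(z)),\pi(x_i))<\eps$, so $\Lambda(z,\xi,\eps')\subset\Lambda(w,\eta,\eps)$. Since $\F$ is upwards hereditary, $\Lambda(w,\eta,\eps)\in\F$, which is exactly $\F$-$\eps$-shadowing of $\eta$ by $w$.

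There is no serious obstacle here; the only point requiring any care is bookkeeping the order in which the two moduli of uniform continuity are applied, so that the $\delta$ one ends up choosing really does convert $\delta$-ergodic pseudo-orbits of $g$ into $\delta'$-ergodic pseudo-orbits of $f$, and then the tracing constant $\eps'$ for $f$ converts back to the desired $\eps$ for $g$. One should also note explicitly that the conjugacy intertwines all iterates, $\pi\circ f^i=g^i\circ\pi$, which is what makes the inclusions of the index sets $\Lambda$ and $\Lambda^c$ work coordinate-by-coordinate. This is precisely why the author remarks that the proof is simple and leaves it to the reader; I would either omit it or condense it to the two or three sentences above.
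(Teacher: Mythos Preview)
Your proposal is correct and is exactly the standard argument the paper has in mind when it says the proof ``is simple'' and ``left to the reader'': transport pseudo-orbits and tracing points through the conjugacy using uniform continuity of $\pi$ and $\pi^{-1}$, and observe that the family condition $\Lambda(\cdot)\in\F$ is purely combinatorial and hence preserved under inclusion. One small bookkeeping slip (precisely at the point you flag as requiring care): you have swapped the roles of $\gamma$ and $\gamma'$. With your conventions, pushing $d_X<\eps'$ forward through $\pi$ needs $\eps'=\gamma(\eps)$, not $\gamma'(\eps)$, and pulling $d_Y<\delta$ back through $\pi^{-1}$ needs $\delta=\gamma'(\delta')$, not $\gamma(\delta')$; once these are interchanged the argument reads cleanly.
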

\begin{lem}\label{Lemma 2.1}
Let $(X, f)$ be a dynamical system and $\alpha\in \left[0, 1\right)$.
If $f$ has the $\mathscr{M}^{\alpha}$-shadowing property, then $f^{k}$
has the $\mathscr{M}^{\alpha}$-shadowing property for any $k\in \mathbb{N}$.
\end{lem}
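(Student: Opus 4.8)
The plan is to transfer an $\mathscr{M}^{\alpha}$-ergodic-pseudo-orbit of $f^k$ into one of $f$ by interpolating, apply the $\mathscr{M}^{\alpha}$-shadowing property of $f$, and then read off a tracing point for $f^k$ along an index set whose upper density in $\Zp$ is still large enough. Fix $\eps>0$; by uniform continuity of $f,\ldots,f^{k-1}$ choose $\eps'>0$ so that $d(a,b)<\eps'$ implies $d(f^j(a),f^j(b))<\eps$ for all $0\le j<k$. Let $\delta'>0$ be the constant given by the $\mathscr{M}^{\alpha}$-shadowing property of $f$ for this $\eps'$. We must produce $\delta>0$ working for $f^k$; the natural choice will again use uniform continuity of the iterates $f^j$ to pass from control of $d(f^k(x_i),x_{i+1})$ to control of the intermediate steps.

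Given a $\delta$-ergodic pseudo-orbit $\xi=\{x_i\}_{i=0}^{\infty}$ of $f^k$, define a sequence $\eta=\{y_m\}_{m=0}^{\infty}$ for $f$ by setting $y_{ik+j}=f^j(x_i)$ for $0\le j<k$. Then for indices $m=ik+j$ with $0\le j<k-1$ one has $d(f(y_m),y_{m+1})=0$, while for $m=ik+(k-1)$ one has $d(f(y_m),y_{m+1})=d(f^k(x_i),x_{i+1})$. Hence the "bad set" $\Lambda^c(\eta,f,\delta)$ is contained in $k\cdot\Lambda^c(\xi,f^k,\delta)$ (shifted by $k-1$), and a density computation—exactly of the kind used repeatedly in this paper—gives $\overline{d}(\Lambda^c(\eta,f,\delta))\le \tfrac1k\overline{d}(\Lambda^c(\xi,f^k,\delta))$, which is $0$ since $\xi$ is a $\delta$-ergodic pseudo-orbit. (Strictly one first picks $\delta$ small enough that $\delta<\delta'$ and also handles the intermediate steps; since those intermediate steps are exact when $\xi$ is an exact $f^k$-pseudo-orbit but only approximate in general, one takes $\delta>0$ with the property that $d(a,b)<\delta$ forces $d(f^j(a),f^j(b))<\delta'$ for all $j<k$, so that $\Lambda^c(\eta,f,\delta')\subseteq k\cdot\Lambda^c(\xi,f^k,\delta)+\{0,\ldots,k-1\}$, whose upper density is still $0$.) So $\eta$ is a $\delta'$-ergodic pseudo-orbit of $f$.

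By the $\mathscr{M}^{\alpha}$-shadowing property of $f$ there is $z\in X$ with $\Lambda(z,\eta,\eps')\in\mathscr{M}^{\alpha}$, i.e. $\overline{d}\bigl(\{m:d(f^m(z),y_m)<\eps'\}\bigr)>\alpha$. Restricting to indices $m=ik$ divisible by $k$ need not preserve upper density, so instead I observe that if $m=ik+j\in\Lambda(z,\eta,\eps')$ then $d(f^{ik+j}(z),f^j(x_i))<\eps'$, and applying $f^{k-j}$ together with the choice of $\eps'$ (via $f^{k-j}$ a contraction of distances below $\eps'$ up to $\eps$) would give control at $f^{(i+1)k}(z)$ versus $f^k(x_i)=f^0(x_{i+1})$ only up to the pseudo-orbit error—this is the delicate point. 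The clean route, which I expect to be the crux, is to note that for each residue $j$, the set $\{i: ik+j\in\Lambda(z,\eta,\eps')\}$ has, for at least one $j_0\in\{0,\ldots,k-1\}$, upper density $>\alpha$ (since the average over residues of these densities is $\ge\tfrac1k\overline d(\Lambda(z,\eta,\eps'))$... — more carefully, $\overline d$ of a union is at most the sum, so some residue class contributes $>\alpha/k$; to get $>\alpha$ one argues along a subsequence realizing the $\limsup$). Then $w:=f^{j_0}(z)$ satisfies $d(f^{ik}(w),f^{j_0}(x_i)) <\eps'$ for a set of $i$ of upper density $>\alpha$; finally replace $x_i$ by $f^{j_0}(x_i)$ is not what we want—rather, we want to shadow $\{x_i\}$ itself.

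The honest fix is to build $\eta$ so that the divisible-by-$k$ coordinates are exactly the $x_i$: that is what the definition $y_{ik+j}=f^j(x_i)$ already does, since $y_{ik}=x_i$. So it remains only to pass from ``$\overline d(\Lambda(z,\eta,\eps'))>\alpha$'' to ``$\overline d(\{i:ik\in\Lambda(z,\eta,\eps')\})>\alpha$.'' This is the one genuine obstacle, and it is resolved by a standard averaging/pigeonhole argument over the $k$ residue classes combined with the fact that inside each ``good'' block of $\eta$ the membership at the $k$ consecutive positions $ik,\ldots,ik+k-1$ is not independent: because $d(f(y_m),y_{m+1})=0$ for $m$ not $\equiv k-1$, one shows that $ik\in\Lambda(z,\eta,\eps'')$ can be deduced from $ik+j\in\Lambda(z,\eta,\eps')$ for any $j<k$ after shrinking $\eps'$ suitably (push forward/pull back by $f^j$ using uniform continuity), so the good index sets for the $k$ residues essentially coincide up to a change of $\eps$, each of density $>\alpha$. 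Packaging these uniform-continuity choices at the start (choose $\eps'$ so small that $k$-fold forward iteration keeps distances below the target $\eps$) makes the argument go through, and setting $\delta$ from $\delta'$ by the analogous uniform-continuity estimate completes the proof that $f^k$ has the $\mathscr{M}^{\alpha}$-shadowing property.
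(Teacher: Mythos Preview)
Your overall architecture matches the paper's: interpolate the $f^k$-pseudo-orbit $\{e_i\}$ to an $f$-pseudo-orbit via $x_{ik+j}=f^j(e_i)$, check it is $\delta$-ergodic for $f$, shadow it by some $z$, and then transfer the tracing back to $f^k$. The first three steps are fine.

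The genuine gap is in your final step. You claim that from $ik+j\in\Lambda(z,\eta,\eps')$ (i.e.\ $d(f^{ik+j}(z),f^j(x_i))<\eps'$) one can deduce $ik\in\Lambda(z,\eta,\eps'')$, by ``push forward/pull back by $f^j$ using uniform continuity.'' But this inference goes \emph{backward} along the orbit and would require a uniform modulus of continuity for $f^{-j}$, which is not available for a general continuous $f$ (think of $f(x)=2x\bmod 1$: $d(f(a),f(b))<\eps'$ never forces $d(a,b)<1/2$). Uniform continuity only lets you push forward. You actually had the correct idea one paragraph earlier---applying $f^{k-j}$ to land at $(i+1)k$---and dismissed it because of ``the pseudo-orbit error''; but that error is $<\delta$ on a set of density~$1$, so it is harmless after intersecting with $\Lambda(\{e_i\},f^k,\delta)$.

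The paper carries this out in contrapositive form, which makes the bookkeeping cleaner: assuming $\overline d\big(\Lambda(z,\{e_i\},f^k,\eps)\big)\le\alpha$, one shows that for each $i$ with $d((f^k)^i(z),e_i)\ge\eps$ and $d(f^k(e_{i-1}),e_i)<\delta$ (a density-$1$ condition), \emph{forward} iteration forces $d(f^{ik-j}(z),x_{ik-j})\ge\gamma$ for every $j=0,\dots,k-1$. Thus each bad index $i$ for $f^k$ spawns a block of $k$ bad indices for $f$, the blocks are disjoint, and one concludes $\overline d\big(\Lambda(z,\{x_i\},f,\gamma)\big)\le\alpha$, contradicting the choice of $z$. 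Your argument becomes correct once you replace ``good at $ik+j$ implies good at $ik$'' by ``good at $ik+j$ (and $i\in\Lambda(\{e_i\},f^k,\delta)$) implies good at $(i+1)k$,'' and then run the easy density estimate $\overline d\big(\{i:\exists j<k,\ ik+j\in A\}\big)\ge\overline d(A)$.
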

\begin{proof}
Given any fixed $\varepsilon>0$, the uniform continuity of $f$
implies that there exists $\gamma\in \left(0,
\varepsilon/4\right)$ such that, for all $x, y\in X$,
\begin{equation}\label{4.1}
d(x, y)<\gamma \quad \Longrightarrow \quad
d(f^{i}(x),
f^{i}(y))<\frac{\varepsilon}{4} \text{ for }i=0,1,\ldots, k.
\end{equation}

There exists $\delta\in (0, \gamma)$ such that every $\delta$-ergodic
pseudo-orbit of $f$ is $\mathscr{M}^{\alpha}$
-$\gamma$-shadowed by some point in $X$.

Fix any $\delta$-ergodic pseudo-orbit $\left\{e_{i}\right\}_{i=0}^{\infty}$ of $f^{k}$,
and let
\[
x_{ik+j}=f^{j}(e_{i}), \ \forall \ i\in \Zp,
0\leq j<k.
\]
Clearly,
$\set{x_{i}}_{i=0}^{\infty}$ is a
$\delta$-ergodic pseudo-orbit of $f$.
Then, there exists $z\in X$ such that
\begin{equation}\label{4.2}
\limsup_{n\rightarrow\infty}\frac{1}{n}\left|\Lambda_{n}(z, \set{x_{i}}_{i=0}^{\infty},
f, \gamma)\right|
>\alpha.
\end{equation}

It suffices to show that
\begin{equation}
\limsup_{n\rightarrow\infty}\frac{1}{n}\left|\Lambda_{n}
(z, \set{e_{i}}_{i=0}^{\infty}, f^{k}, \varepsilon)\right|
>\alpha.\label{calim:4.3}
\end{equation}
Suppose on the contrary that \eqref{calim:4.3} does not hold.
Then
\begin{eqnarray*}
\xi&:=&\liminf_{n\rightarrow\infty}\frac{1}{n}\left|\Lambda_{n}^{c}(z,
\set{e_{i}}_{i=0}^{\infty}, f^{k}, \varepsilon)\right|\\
&=&1-\limsup_{n\rightarrow\infty}\frac{1}{n}\left|\Lambda_{n}(z,
\set{e_{i}}_{i=0}^{\infty}, f^{k}, \varepsilon)\right|
\geq 1-\alpha.
\end{eqnarray*}
For any fixed $Q\in \N$, we can find $N_{Q}\in \N$
such that for any $n\geq N_{Q}$,
\begin{equation}\label{4.4}
\frac{1}{n}\left|\Lambda_{n}^{c}(z, \set{e_i}_{i=0}^\infty, f^{k},
\varepsilon)\right|\geq\xi-\frac{1}{2Q}.
\end{equation}
By the definition of $\delta$-ergodic pseudo-orbit, we have
$$
\lim_{n\rightarrow\infty}\frac{1}{n}\left|\Lambda_{n}(\set{e_i}_{i=0}^\infty,
f^{k}, \delta)+1 \right|=1.
$$
Combining this with \eqref{**} and \eqref{4.4}, it follows that
$$
\liminf_{n\rightarrow\infty}\frac{1}{n}
\left|\left(\Lambda_{n}(\set{e_i}_{i=0}^\infty, f^{k}, \delta)+1\right) \cap
\Lambda_{n}^{c}(z, \set{e_i}_{i=0}^\infty, f^{k}, \varepsilon)\right|
\geq\xi-\frac{1}{2Q},
$$
which implies that there exists $M_{Q}>N_Q$
such that for any $n\geq M_{Q}$,
$$
\frac{1}{n}\left|\left(\Lambda_{n}(\set{e_i}_{i=0}^\infty, f^{k}, \delta)+1\right)
\cap \Lambda_{n}^{c}(z, \set{e_i}_{i=0}^\infty, f^{k},
\varepsilon)\right|\geq\xi-\frac{1}{Q}.
$$
For each $n>M_Q$, denote
$$
\Omega_{n}:=\left(\Lambda_{n}(\set{e_i}_{i=0}^\infty, f^{k},
\delta)+1\right)\cap \Lambda_{n}^{c}(z, \set{e_i}_{i=0}^\infty, f^{k}, \eps).
$$
Observe that if $i\in \Omega_n$ then for each $1\leq j<k$ we have
$f^{j}(x_{ik-j})=f^{k}(x_{(i-1)k})=f^{k}(e_{(i-1)})$ which, combined with the fact that $i-1\in
\Lambda_{n}(\set{e_i}_{i=0}^\infty, f^{k}, \delta)$, gives $d(f^{j}(x_{ik-j}), x_{ik})
<\delta$.
This together with $i\in \Lambda_{n}^{c}(z, \{e_{i}\}_{i=0}^{\infty},
f^{k}, \eps)$ implies that
\begin{eqnarray*}
\varepsilon &\leq& d(f^{ik}(z), x_{ik})\leq
d(f^{j}(f^{ik-j}(z)), f^{j}(x_{ik-j}))+d(f^{j}(x_{ik-j}), x_{ik})\\
&<&d(f^{j}(f^{ik-j}(z)), f^{j}(x_{ik-j}))+\delta,
\end{eqnarray*}
which gives
$$
d(f^{j}(f^{ik-j}(z)),
f^{j}(x_{ik-j}))>\varepsilon-\delta>\frac{3\varepsilon}{4}.
$$
This together with \eqref{4.1} implies that if we fix any $n>M_Q$, any $i\in \Omega_{n}$ and any $1\leq j<k$, then
\begin{equation}\label{4.5}
d(f^{ik-j}(z), x_{ik-j})\geq\gamma.
\end{equation}
Hence, for any $n\geq M_{Q}$,
$$
\Lambda_{nk}^{c}(z, \set{x_i}_{i=0}^\infty, f, \gamma)\supset
\bigcup_{j=0}^{k-1}\left(k\cdot\Omega_{n}-j\right).
$$
Clearly, $(k\cdot\Omega_{n}+s)\cap (k\cdot\Omega_{n}+t)=\emptyset$ for all $0\leq s < t <k$, so
$
\left|\cup_{j=0}^{k-1}\left(k\cdot\Omega_{n}-j\right)\right|=
k\left|\Omega_{n}\right|,
$
and consequently for every $n\geq M_{Q}$ we obtain that
$$
\frac{\left|\Lambda_{nk}^{c}(z, \set{x_i}_{i=0}^\infty, f,
\gamma)\right|}{nk}\geq \frac{\left|\left(\Lambda_{n}(\set{e_i}_{i=0}^\infty,
f^{k}, \delta)+1\right)\cap \Lambda_{n}^{c}(z, \set{e_i}_{i=0}^\infty, f^{k},
\varepsilon)\right|}{n} \geq\xi-\frac{1}{Q}.
$$
This implies that, for any $m\geq k M_{Q}$ and $s\geq M_Q$ such that $sk\leq m < (s+1)k$, the following condition holds:
\begin{eqnarray*}
\frac{\left|\Lambda_{m}^{c}(z, \set{x_i}_{i=0}^\infty, f, \gamma)\right|}{m}
&\geq& \frac{\left|\Lambda_{sk}^{c} (z, \set{x_i}_{i=0}^\infty, f,
\gamma)\right|}{(s+1)k}
\geq\frac{\left|\Lambda_{(s+1)k}^{c}
(z, \set{x_i}_{i=0}^\infty, f, \gamma)\right|-k}{(s+1)k}\\
&\geq&\frac{\left|\Lambda_{(s+1)k}^{c} (z, \set{x_i}_{i=0}^\infty, f,
\gamma)\right|}{(s+1)k}
-\frac{1}{s+1}\\
&\geq& \xi-\frac{1}{Q}-\frac{k}{m}.
\end{eqnarray*}
This immediately implies that
$$
\liminf_{n\rightarrow\infty}\frac{1}{n} \left|\Lambda_{n}^{c}(z,
\set{x_i}_{i=1}^\infty, f, \gamma)\right|\geq\xi-\frac{1}{Q}.
$$
which, since $Q$ can be arbitrarily large, implies that
$$
\liminf_{n\rightarrow\infty}\frac{1}{n} \left|\Lambda_{n}^{c}(z,
\set{x_i}_{i=1}^\infty, f, \gamma)\right|\geq\xi\geq 1-\alpha.
$$
As a consequence of the above observations, we obtain that
\begin{eqnarray*}
\limsup_{n\rightarrow\infty}\frac{1}{n} \left|\Lambda_{n}(z, \set{x_i}_{i=1}^\infty,
f, \gamma)\right| &=&1-\liminf_{n\rightarrow\infty}
\frac{1}{n}\left|\Lambda_{n}^{c}(z, \set{x_i}_{i=1}^\infty, f,
\gamma)\right|\\
&\leq& 1-\xi\leq \alpha,
\end{eqnarray*}
which contradicts \eqref{4.2}.
Thus, \eqref{calim:4.3} holds, which completes the proof.
\end{proof}
\begin{lem}\label{Lemma 2.2}
Let $(X, f)$ be a dynamical system and $\alpha\in [0, 1)$.
If $f^{k}$ has the $\mathscr{M}^{\alpha}$-shadowing property for
some $k\in \mathbb{N}$, then $f$ has the $\mathscr{M}^{\alpha}$-shadowing
property.
\end{lem}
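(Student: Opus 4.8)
The plan is to run the argument of Lemma~\ref{Lemma 2.1} in the opposite direction: from a pseudo-orbit of $f$ we pass to its ``$k$-sampled'' subsequence, which we will see is a pseudo-orbit of $f^k$; we shadow that with the hypothesis; and then we interpolate the shadowing point back up to the original orbit of $f$.

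Fix $\eps>0$. First, by uniform continuity of $f$ I would choose $\gamma\in(0,\eps)$ so that $d(u,v)<\gamma$ forces $d(f^{j}(u),f^{j}(v))<\eps/2$ for $j=0,1,\ldots,k-1$. Applying the $\M^{\alpha}$-shadowing property of $f^{k}$ to the error $\gamma$ yields some $\delta>0$ such that every $\delta$-ergodic pseudo-orbit of $f^{k}$ is $\M^{\alpha}$-$\gamma$-shadowed; here I would note that if this holds for $\delta$ it holds for every smaller $\delta$ (a $\delta_1$-ergodic pseudo-orbit with $\delta_1<\delta$ is automatically $\delta$-ergodic), so we may assume $\delta<\eps/2$. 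Finally, again by uniform continuity of $f$, pick $\delta'\in(0,\delta)$ with $d(u,v)<\delta'\Rightarrow d(f^{j}(u),f^{j}(v))<\delta/k$ for $j=0,\ldots,k-1$. The claim is that this $\delta'$ witnesses the $\M^{\alpha}$-shadowing property of $f$ for $\eps$.

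Given a $\delta'$-ergodic pseudo-orbit $\{x_i\}_{i=0}^{\infty}$ of $f$, set $e_i=x_{ik}$, let $G=\{l\in\Zp : d(f(x_l),x_{l+1})<\delta'\}$ (so $\Zp\setminus G$ has density $0$), and $T=\{i\in\Zp:\{ik,ik+1,\ldots,ik+k-1\}\subset G\}$. A telescoping estimate through the points $f^{k-l}(x_{ik+l})$, $l=0,\ldots,k$, together with the choice of $\delta'$, gives $d(f^{k}(e_i),e_{i+1})<\delta$ whenever $i\in T$; since $\abs{[0,n)\setminus T}\le\abs{[0,nk)\setminus G}$, the set $\Zp\setminus T$ has density $0$, so $\{e_i\}$ is a $\delta$-ergodic pseudo-orbit of $f^{k}$. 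Hence there is $z\in X$ with $\overline{d}(S)>\alpha$ for $S=\{i:d(f^{ik}(z),x_{ik})<\gamma\}$. For $i\in S\cap T$ and $0\le j<k$, the inequality $d(f^{ik+j}(z),x_{ik+j})\le d(f^{j}(f^{ik}(z)),f^{j}(x_{ik}))+d(f^{j}(x_{ik}),x_{ik+j})$ bounds the first term by $\eps/2$ (choice of $\gamma$) and the second by $j\delta/k\le\delta<\eps/2$ (a telescoping estimate along $G$, using $i\in T$), so $d(f^{ik+j}(z),x_{ik+j})<\eps$. Thus $\Lambda(z,\{x_i\},f,\eps)\supseteq\bigcup_{j=0}^{k-1}\bigl(k\cdot(S\cap T)+j\bigr)$, a disjoint union, whence $\abs{\Lambda_{nk}(z,\{x_i\},f,\eps)}\ge k\,\abs{(S\cap T)\cap[0,n)}$; passing to the subsequence of indices $nk$ and using $\overline{d}(\Zp\setminus T)=0$ (so $\overline{d}(S\cap T)\ge\overline{d}(S)>\alpha$) gives $\overline{d}(\Lambda(z,\{x_i\},f,\eps))>\alpha$, i.e. $\Lambda(z,\{x_i\},f,\eps)\in\M^{\alpha}$.

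The uniform-continuity chains and the density bookkeeping are routine; the one step that needs genuine care is verifying that the $k$-sampled sequence $\{e_i\}$ really is a $\delta$-ergodic pseudo-orbit of $f^{k}$ --- that is, that the indices where $f^{k}(e_i)$ and $e_{i+1}$ fail to be $\delta$-close are confined to $\Zp\setminus T$, and that this set still has density $0$ after the change of scale from $n$ to $nk$. The interpolation step at the end is symmetric and essentially the same computation used in the proof of Lemma~\ref{Lemma 2.1}.
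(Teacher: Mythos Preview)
Your proof is correct and follows essentially the same strategy as the paper: sample the $\delta'$-ergodic pseudo-orbit of $f$ at multiples of $k$, verify this yields a $\delta$-ergodic pseudo-orbit of $f^{k}$, apply the hypothesis to obtain a shadowing point, and then interpolate back via uniform continuity. The only cosmetic differences are that the paper packages the uniform-continuity estimates as a single ``finite-chain shadowing'' choice of $\gamma$ (if $d(f(x_i),x_{i+1})<\gamma$ along a block and $d(z,x_0)<\gamma$ then $d(f^i(z),x_i)<\eps$), whereas you do two separate telescoping sums; and the paper tracks an explicit subsequence $\{n_l\}$ realizing the $\limsup$, while you argue directly that the full upper density dominates the density along the subsequence $nk$. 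Both routes are equally valid.
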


\begin{proof}
Since $f$ is uniformly continuous,
for any $\eps>0$ we can find
$\gamma\in (0, \eps/2)$ such that if $d(f(x_i),x_{i+1})<\gamma$
for all $i=0, \ldots, k$ and $d(z, x_0)<\gamma$
then $d(f^i(z),x_i)<\eps$ for all $i=0, \ldots, k$.
%
%
%

Map $f^{k}$ has the $\mathscr{M}^{\alpha}$-shadowing property, so there
exists $\delta\in (0, \gamma/4)$ such that every
$\delta$-ergodic pseudo-orbit of $f^{k}$ is
$\mathscr{M}^{\alpha}$-$\gamma/ 4$-shadowed by a point
in $X$. Similarly, we can find
$\delta'\in (0, \delta/2)$ such that if $d(f(x_i),x_{i+1})<\delta'$
for all $i=0, \ldots, k$ and $d(z, x_0)<\delta'$
then $d(f^i(z),x_i)<\delta$ for all $i=0, \ldots, k$.

Given any $\delta'$-ergodic pseudo-orbit
$\set{x_{i}}_{i=0}^{\infty}$ of $f$, denote $z_i=x_{ik}$ for all $i\geq 0$.
Denote $\mathscr{C}_{n}:= \bigcap_{j=0}^{k}\left\{0\leq i<n: d(f(x_{ik+j}), x_{ik+j+1})<\delta'
\right\}$ and let  $\mathscr{C}=\cup_{n\in \N}\mathscr{C}_{n}$.
By definition, $\lim_{n\rightarrow\infty}\frac{1}{n}|\Lambda_{n}(\{x_{i}\}_{i=0}
^{\infty}, f, \delta')|=1$ and it is also easy to see that for any $j\in \{0, \ldots,
k\}$,
$$
\lim_{n\rightarrow\infty}
\frac{1}{n}|\left\{0\leq i<n: d(f(x_{ik+j}), x_{ik+j+1})<\delta'\right\}|=1.
$$
By \eqref{**}, we obtain that $\lim_{n\rightarrow\infty}\frac{1}{n}|\mathscr{C}_{n}|=1$.
If $i\in \mathscr{C}$ then $d(f(x_{ik+j}), x_{ik+j+1})<\delta'$ for all $j=0, \ldots, k$, and so,
by the choice of $\delta'$, we immediately obtain that $d(f^{k}(z_{i}), z_{i+1})=
d(f^{k}(x_{ik}), x_{ik+k})<\delta$.
Hence, $\set{z_{i}}_{i=0}^{\infty}$ is a $\delta$-ergodic pseudo-orbit
of $f^{k}$, and consequently, there exists $z\in X$ such that
$$
\limsup_{n\rightarrow\infty}\frac{1}{n} \left|\Lambda_{n}(z,
\set{z_i}_{i=0}^\infty, f^{k}, \gamma/ 4)\right|>\alpha.
$$
In particular, there exist $\xi>\alpha$ and a
strictly increasing sequence $\left\{n_{l}\right\}_{l=1}^{\infty}
\subset\mathbb{N}$ such that for any $l\in \mathbb{N}$ the following condition holds:
\begin{equation}\label{4.6}
\frac{1}{n_{l}} \left|\Lambda_{n_{l}}(z, \set{z_i}, f^{k},
\gamma/4)\right|\geq \xi.
\end{equation}
For $l\in \N$, define
\begin{eqnarray*}
\mathscr{Q}_{n_{l}}&:=& \bigcap_{i=0}^{k-1}\left(\Lambda(\set{x_i}_{i=0}^\infty, f,
\delta')-i \right) \cap k\cdot \Lambda_{n_{l}}(z,
\set{z_i}_{i=1}^\infty, f^{k}, \gamma/4),\\
\mathscr{H}_{l}&:=&[0, l)\cap \bigcap_{i=0}^{k-1} \left(\Lambda(\{x_{i}\}_{i=0}
^{\infty}, f, \delta')-i\right).
\end{eqnarray*}
The above definition can be written as
\begin{equation}
\mathscr{Q}_{n_{l}}=\mathscr{H}_{n_{l}}\cap k\cdot \Lambda_{n_{l}}(z,
\set{z_i}_{i=1}^\infty, f^{k}, \gamma/4).\label{eq:QiH}
\end{equation}

Recall that $\lim_{n\rightarrow\infty}\frac{1}{n}\left|\Lambda_{n}(\{x_{i}\}_{i=0}^{\infty},
f, \delta')\right|=1$ and, therefore,
\begin{equation}\label{4.8}
\lim_{n\rightarrow\infty}\frac{1}{n}\left|\mathscr{H}_{n}\right|=1.
\end{equation}

For $j\in \mathscr{Q}_{n_{l}}$, $d(f^{j}(z),x_{j})
=d((f^{k})^{j/k}(z), z_{j/k})<\gamma/4$ and $d(f(x_{j+i}),x_{j+i+1})<\delta'
<\gamma$ for all $i=0, \ldots, k-1$. Then, by the choice of $\gamma$, we obtain
that $d(f^{j+i}(z),x_{j+i})<\eps$ for all $i=0, \ldots, k-1$
and, as a direct consequence, we obtain that
\begin{equation}\label{4.7}
\Lambda_{(n_{l}+1)k}(z, \{x_{i}\}_{i=0}^{\infty}, f,
\eps)\supset\bigcup_{i=0}^{k-1}
\left(\mathscr{Q}_{n_{l}}+i\right).
\end{equation}
Combining \eqref{eq:QiH},\eqref{4.8} and \eqref{**}, we get that for any $0\leq i<k$,
\[
\begin{split}
\liminf_{l\rightarrow\infty}&\frac{1}{(n_{l}+1)k}
\left|\left(\mathscr{Q}_{n_{l}}+i\right)\right|\\
&=\liminf_{l\rightarrow\infty}\frac{1}{(n_{l}+1)k}\left|\left(k\cdot
\Lambda_{n_{l}}(z, \{z_{i}\}_{i=0}^{\infty}, f^{k}, \gamma/ 4)+i\right)\right|\\
&\geq
\liminf_{l\rightarrow\infty}\frac{n_{l}\xi}{(n_{l}+1)k}=\frac{\xi}{k}.
\end{split}
\]
Clearly, $(\mathscr{Q}_{n_{l}}+s)\cap(\mathscr{Q}_{n_{l}}+t)=\emptyset$
for all $0\leq s<t<k$; hence, applying \eqref{4.7} yields
\[
\begin{split}
\limsup_{n\rightarrow\infty}&\frac{1}{n}\left|\Lambda_{n}(z, \{x_{i}\}_{i=0}^{\infty},
f, \eps)\right|\\
&\geq\liminf_{l\rightarrow\infty}
\frac{1}{(n_{l}+1)k}\left|\Lambda_{(n_{l}+1)k}(z, \{x_{i}\}_{i=0}^{\infty}, f, \eps)\right|\\
&\geq\liminf_{l\rightarrow\infty}\frac{1}{(n_{l}+1)k}
\sum_{i=0}^{k-1}\left|\left(\mathscr{Q}_{n_{l}}+i\right)\right|\\
&\geq\sum_{i=0}^{k-1}\liminf_{l\rightarrow\infty}\frac{1}{(n_{l}+1)k}
\left|\left(\mathscr{Q}_{n_{l}}+i\right)\right|\geq\xi >\alpha.
\end{split}
\]
Indeed, $(X,f)$ has the $\M^\alpha$-shadowing property. The proof is finished.
\end{proof}

Slightly modifying the proofs of Lemma~\ref{Lemma 2.1} and Lemma~\ref{Lemma 2.2},
we can prove the following two Lemmas.
\begin{lem}\label{Lemma 2.3}
Let $(X, f)$ be a dynamical system and $\alpha\in \left[0, 1\right)$.
If $f$ has the $\mathscr{M}_{\alpha}$-shadowing property, then $f^{k}$
has the $\mathscr{M}_{\alpha}$-shadowing property for any $k\in \mathbb{N}$.
\end{lem}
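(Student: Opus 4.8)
The plan is to run the proof of Lemma~\ref{Lemma 2.1} almost verbatim, interchanging the roles of $\underline{d}$ and $\overline{d}$ (equivalently, of $\liminf$ and $\limsup$); in fact the $\mathscr{M}_{\alpha}$ version turns out to be slightly easier. Fix $\eps>0$, and use uniform continuity of $f$ to pick $\gamma\in(0,\eps/4)$ satisfying \eqref{4.1}, i.e.\ $d(x,y)<\gamma$ implies $d(f^{i}(x),f^{i}(y))<\eps/4$ for $i=0,\dots,k$. Since $f$ has the $\mathscr{M}_{\alpha}$-shadowing property, choose $\delta\in(0,\gamma)$ so that every $\delta$-ergodic pseudo-orbit of $f$ is $\mathscr{M}_{\alpha}$-$\gamma$-shadowed by some point of $X$. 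Given a $\delta$-ergodic pseudo-orbit $\{e_{i}\}_{i=0}^{\infty}$ of $f^{k}$, put $x_{ik+j}=f^{j}(e_{i})$ for $i\in\Zp$, $0\le j<k$; then, exactly as in Lemma~\ref{Lemma 2.1}, $\{x_{i}\}_{i=0}^{\infty}$ is a $\delta$-ergodic pseudo-orbit of $f$ (the only possibly bad positions are the $(i+1)k-1$ with $i\notin\Lambda(\{e_{i}\},f^{k},\delta)$, which form a set of density $0$), so there is $z\in X$ with $\underline{d}(\Lambda(z,\{x_{i}\},f,\gamma))>\alpha$ --- the $\mathscr{M}_{\alpha}$-analogue of \eqref{4.2}. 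The goal is to show $\underline{d}(\Lambda(z,\{e_{i}\},f^{k},\eps))>\alpha$, which gives the conclusion.

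Suppose not; then $\xi^{*}:=\overline{d}(\Lambda^{c}(z,\{e_{i}\},f^{k},\eps))=1-\underline{d}(\Lambda(z,\{e_{i}\},f^{k},\eps))\ge 1-\alpha$, so fix a strictly increasing $(n_{l})$ with $\tfrac{1}{n_{l}}|\Lambda_{n_{l}}^{c}(z,\{e_{i}\},f^{k},\eps)|\to\xi^{*}$. Set $\Omega_{n_{l}}:=(\Lambda_{n_{l}}(\{e_{i}\},f^{k},\delta)+1)\cap\Lambda_{n_{l}}^{c}(z,\{e_{i}\},f^{k},\eps)$ as in Lemma~\ref{Lemma 2.1}; since $\Lambda(\{e_{i}\},f^{k},\delta)+1$ has density $1$, \eqref{**} gives $\tfrac{1}{n_{l}}|\Omega_{n_{l}}|\to\xi^{*}$. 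The local estimate is precisely the one from Lemma~\ref{Lemma 2.1}: for $i\in\Omega_{n_{l}}$, combining $i-1\in\Lambda(\{e_{i}\},f^{k},\delta)$ (which forces $d(f^{j}(x_{ik-j}),x_{ik})=d(f^{k}(e_{i-1}),e_{i})<\delta$ for $1\le j<k$) with $d(f^{ik}(z),x_{ik})\ge\eps$ and the contrapositive of \eqref{4.1} yields $d(f^{ik-j}(z),x_{ik-j})\ge\gamma$ for all $0\le j<k$ (trivially for $j=0$); hence $\{ik-j:0\le j<k\}\subset\Lambda^{c}(z,\{x_{i}\},f,\gamma)$, so $\Lambda_{n_{l}k}^{c}(z,\{x_{i}\},f,\gamma)\supset\bigcup_{j=0}^{k-1}(k\cdot\Omega_{n_{l}}-j)$, and since these $k$ translates lie in distinct residue classes mod $k$ they are pairwise disjoint, giving $|\Lambda_{n_{l}k}^{c}(z,\{x_{i}\},f,\gamma)|\ge k|\Omega_{n_{l}}|$. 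Dividing by $n_{l}k$, $\tfrac{1}{n_{l}k}|\Lambda_{n_{l}k}^{c}(z,\{x_{i}\},f,\gamma)|\ge\tfrac{1}{n_{l}}|\Omega_{n_{l}}|\to\xi^{*}\ge 1-\alpha$, so $\overline{d}(\Lambda^{c}(z,\{x_{i}\},f,\gamma))\ge 1-\alpha$, that is $\underline{d}(\Lambda(z,\{x_{i}\},f,\gamma))\le\alpha$ --- contradicting the choice of $z$. Hence $f^{k}$ has the $\mathscr{M}_{\alpha}$-shadowing property.

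I expect no genuine obstacle; this is essentially a transcription of Lemma~\ref{Lemma 2.1}, and it is actually shorter, because bounding the $\limsup$ $\overline{d}(\Lambda^{c}(z,\{x_{i}\},f,\gamma))$ from below only requires a single subsequence, so the interpolation step over the range $sk\le m<(s+1)k$ used in Lemma~\ref{Lemma 2.1} is not needed. The one point deserving care is that passing to the subsequence $(n_{l})$ must be compatible with the intersection defining $\Omega_{n_{l}}$: this is fine since any density-one $C\subset\Zp$ has $|C\cap[0,n)|=n-o(n)$ uniformly in $n$, so $\tfrac{1}{n_{l}}|\Omega_{n_{l}}|$ and $\tfrac{1}{n_{l}}|\Lambda_{n_{l}}^{c}(z,\{e_{i}\},f^{k},\eps)|$ share the common limit $\xi^{*}$. (If one prefers, the argument by contradiction can be avoided altogether: with $B=\Lambda^{c}(z,\{x_{i}\},f,\gamma)$, the set $E=\{i\ge 1:\{(i-1)k+1,\dots,ik\}\subset B\}$ satisfies $\overline{d}(E)\le\overline{d}(B)$ because the blocks $\{(i-1)k+1,\dots,ik\}$, $i\in E$, are pairwise disjoint, while the contrapositive of the local estimate above shows $\Lambda(z,\{e_{i}\},f^{k},\eps)\supset\{i\ge 1:i-1\in\Lambda(\{e_{i}\},f^{k},\delta)\}\setminus E$, whose lower density is at least $1-\overline{d}(B)=\underline{d}(\Lambda(z,\{x_{i}\},f,\gamma))>\alpha$.)
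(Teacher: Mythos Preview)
Your proposal is correct and follows exactly the route the paper intends: the authors do not write out a separate proof but merely state that Lemma~\ref{Lemma 2.3} is obtained by ``slightly modifying'' the proof of Lemma~\ref{Lemma 2.1}, which is precisely what you do. Your observation that the interpolation step over $sk\le m<(s+1)k$ becomes unnecessary (since a single subsequence suffices to bound a $\limsup$ from below) is a genuine simplification, and the alternative direct argument via the set $E$ at the end is also valid and arguably cleaner than the contradiction route.
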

\begin{lem}\label{Lemma 2.4}
Let $(X, f)$ be a dynamical system and $\alpha\in [0, 1)$.
If $f^{k}$ has the $\mathscr{M}_{\alpha}$-shadowing property for
some $k\in \mathbb{N}$, then $f$ has the $\mathscr{M}_{\alpha}$-shadowing
property.
\end{lem}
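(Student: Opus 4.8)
The plan is to adapt the proof of Lemma~\ref{Lemma 2.2} almost verbatim. There, the hypothesis $\overline d>\alpha$ had to be exploited only along a suitable subsequence $\{n_l\}$ of block lengths; here the hypothesis $\underline d>\alpha$ will be used instead in the form that the favourable density bound holds for \emph{all} large block lengths, and one short interpolation step will be added at the end, to pass from block lengths divisible by $k$ to arbitrary ones. I expect that interpolation to be the only place that is not a line-by-line translation of Lemma~\ref{Lemma 2.2}.

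Fix $\eps>0$ and choose $\gamma\in(0,\eps/2)$, $\delta\in(0,\gamma/4)$ (using the $\M_\alpha$-shadowing of $f^k$) and $\delta'\in(0,\delta/2)$ exactly as in the proof of Lemma~\ref{Lemma 2.2}: short windows of one-step $\gamma$-errors of $f$ are $\eps$-traced, every $\delta$-ergodic pseudo-orbit of $f^k$ is $\M_\alpha$-$(\gamma/4)$-shadowed, and $k$ consecutive one-step $\delta'$-errors of $f$ produce one $\delta$-error of $f^k$. Given a $\delta'$-ergodic pseudo-orbit $\{x_i\}_{i=0}^\infty$ of $f$, set $z_i=x_{ik}$. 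Since $\{x_i\}_{i=0}^\infty$ is a $\delta'$-ergodic pseudo-orbit, each of the sets $\{i:d(f(x_{ik+j}),x_{ik+j+1})<\delta'\}$, $j=0,\dots,k-1$, has density $1$, so by \eqref{**} their intersection does too; on it $d(f^k(z_i),z_{i+1})<\delta$, hence $\{z_i\}_{i=0}^\infty$ is a $\delta$-ergodic pseudo-orbit of $f^k$. Therefore there is $z\in X$ with $\underline d\bigl(\Lambda(z,\{z_i\}_{i=0}^\infty,f^k,\gamma/4)\bigr)>\alpha$, and we may fix $\xi$ with $\alpha<\xi<\underline d\bigl(\Lambda(z,\{z_i\}_{i=0}^\infty,f^k,\gamma/4)\bigr)$, so that $\bigl|\Lambda_N(z,\{z_i\}_{i=0}^\infty,f^k,\gamma/4)\bigr|\ge N\xi$ for all sufficiently large $N$. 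This use of \emph{all} large $N$ is precisely where $\liminf$ replaces the subsequence $\{n_l\}$ of Lemma~\ref{Lemma 2.2}.

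Next run the combinatorial estimate of Lemma~\ref{Lemma 2.2} on the blocks $[0,Nk)$: put $\mathscr H=\bigcap_{i=0}^{k-1}\bigl(\Lambda(\{x_i\}_{i=0}^\infty,f,\delta')-i\bigr)$, which has density $1$ by \eqref{**}, and $\mathscr Q_N=\mathscr H\cap k\cdot\Lambda_N(z,\{z_i\}_{i=0}^\infty,f^k,\gamma/4)$. For $j\in\mathscr Q_N$ one has $d(f^j(z),x_j)<\gamma/4$ and $d(f(x_{j+i}),x_{j+i+1})<\delta'<\gamma$ for $i=0,\dots,k-1$, hence $d(f^{j+i}(z),x_{j+i})<\eps$ for $i=0,\dots,k-1$ by the choice of $\gamma$; since $\mathscr Q_N\subset k\cdot\Zp$ the translates $\mathscr Q_N,\mathscr Q_N+1,\dots,\mathscr Q_N+(k-1)$ are pairwise disjoint subsets of $[0,Nk)$, so $\bigl|\Lambda_{Nk}(z,\{x_i\}_{i=0}^\infty,f,\eps)\bigr|\ge k\bigl|\mathscr Q_N\bigr|$. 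As $k\cdot\Lambda_N(z,\{z_i\}_{i=0}^\infty,f^k,\gamma/4)\subset[0,Nk)$ has at least $N\xi$ elements while $\bigl|[0,Nk)\setminus\mathscr H\bigr|=o(Nk)$, we get $\bigl|\mathscr Q_N\bigr|\ge N\xi-o(Nk)$ and therefore $\tfrac1{Nk}\bigl|\Lambda_{Nk}(z,\{x_i\}_{i=0}^\infty,f,\eps)\bigr|\ge\xi-\eta_N$ with $\eta_N\to0$.

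Finally, the interpolation: given large $m$, put $N=\lfloor m/k\rfloor$, so that $Nk\le m<(N+1)k$ and $\Lambda_m(z,\{x_i\}_{i=0}^\infty,f,\eps)\supset\Lambda_{Nk}(z,\{x_i\}_{i=0}^\infty,f,\eps)$; hence $\tfrac1m\bigl|\Lambda_m(z,\{x_i\}_{i=0}^\infty,f,\eps)\bigr|\ge\tfrac{N}{N+1}(\xi-\eta_N)\to\xi$, so $\underline d\bigl(\Lambda(z,\{x_i\}_{i=0}^\infty,f,\eps)\bigr)\ge\xi>\alpha$. Thus $\{x_i\}_{i=0}^\infty$ is $\M_\alpha$-$\eps$-shadowed by $z$, and since $\eps>0$ was arbitrary, $(X,f)$ has the $\M_\alpha$-shadowing property. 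The only ingredient beyond Lemma~\ref{Lemma 2.2} is this last elementary estimate, which is in keeping with the remark before the statement that the proof is a slight modification of that of Lemma~\ref{Lemma 2.2}.
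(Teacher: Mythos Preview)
Your proposal is correct and follows precisely the route the paper intends: the paper does not write out a separate proof of Lemma~\ref{Lemma 2.4} but simply remarks that it is obtained by slightly modifying the proof of Lemma~\ref{Lemma 2.2}, and your write-up is exactly that modification---replacing the subsequence $\{n_l\}$ by all large $N$ (since $\underline d>\alpha$ gives the bound eventually for every $N$) and adding the elementary interpolation from block lengths $Nk$ to arbitrary $m$. Nothing further is needed.
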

Combining together Lemmas \ref{Lemma 2.1}--\ref{Lemma 2.4}, we obtain the following result.
\begin{thm}\label{M_aiterations}
Let $(X, f)$ be a dynamical system and $\alpha\in [0, 1)$.
Then the following statements are equivalent:
\begin{enumerate}
\item\label{3.6.1}
$f$ has the $\mathscr{M}^{\alpha}$-shadowing property (resp.,
$\mathscr{M}_{\alpha}$-shadowing property);

\item\label{3.6.2} $f^{k}$ has the $\mathscr{M}^{\alpha}$-shadowing property (resp.,
$\mathscr{M}_{\alpha}$-shadowing property) for
any $k\in \mathbb{N}$;

\item\label{3.6.3} $f^{k}$ has the $\mathscr{M}^{\alpha}$-shadowing property (resp.,
$\mathscr{M}_{\alpha}$-shadowing property) for
some $k\in \mathbb{N}$.
\end{enumerate}
\end{thm}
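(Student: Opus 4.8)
The plan is to obtain Theorem~\ref{M_aiterations} as a purely formal consequence of the four preceding lemmas, so that no new dynamical argument is required. The implication \eqref{3.6.1}$\Rightarrow$\eqref{3.6.2} is exactly Lemma~\ref{Lemma 2.1} (for the $\M^\alpha$ case) and Lemma~\ref{Lemma 2.3} (for the $\M_\alpha$ case), applied verbatim with the given $k$. The implication \eqref{3.6.2}$\Rightarrow$\eqref{3.6.3} is trivial, since $\N\neq\emptyset$: simply specialize ``for any $k$'' to, say, $k=1$ (or any fixed $k$). Finally, \eqref{3.6.3}$\Rightarrow$\eqref{3.6.1} is Lemma~\ref{Lemma 2.2} (resp. Lemma~\ref{Lemma 2.4}): if $f^k$ has the $\M^\alpha$-shadowing property for some particular $k$, then $f$ does. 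Chaining these three implications closes the cycle \eqref{3.6.1}$\Rightarrow$\eqref{3.6.2}$\Rightarrow$\eqref{3.6.3}$\Rightarrow$\eqref{3.6.1}, establishing equivalence of all three.

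One bookkeeping point worth spelling out: the statement contains two parallel assertions, one for $\M^\alpha$-shadowing and one for $\M_\alpha$-shadowing, and the two must not be mixed. I would simply note that the $\M^\alpha$ chain uses Lemmas~\ref{Lemma 2.1} and~\ref{Lemma 2.2}, while the $\M_\alpha$ chain uses Lemmas~\ref{Lemma 2.3} and~\ref{Lemma 2.4}; both chains have identical logical shape, so it suffices to run the argument once and remark that the other case is verbatim. I would also remark that one could equally well cite Proposition~\ref{Proposition 4.1} to transfer between $f^k$ acting on $X$ and a conjugate copy, but this is not needed here since the lemmas already speak directly about powers of $f$.

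I do not anticipate any genuine obstacle: all the dynamical content — the careful density estimates relating $\delta$-ergodic pseudo-orbits of $f$ and of $f^k$, and the counting arguments with \eqref{**} — has already been carried out inside the proofs of Lemmas~\ref{Lemma 2.1}--\ref{Lemma 2.4}. The only thing to be careful about is quantifier order in the ``some $k$'' versus ``any $k$'' clauses, which is exactly what makes the three statements non-trivially equivalent rather than tautologically identical. Concretely, the proof reads: \eqref{3.6.1}$\Rightarrow$\eqref{3.6.2} by Lemma~\ref{Lemma 2.1} (resp. Lemma~\ref{Lemma 2.3}); \eqref{3.6.2}$\Rightarrow$\eqref{3.6.3} trivially; \eqref{3.6.3}$\Rightarrow$\eqref{3.6.1} by Lemma~\ref{Lemma 2.2} (resp. Lemma~\ref{Lemma 2.4}).

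\begin{proof}
The equivalence follows immediately by combining the preceding lemmas. Indeed, \eqref{3.6.1}$\Rightarrow$\eqref{3.6.2} is Lemma~\ref{Lemma 2.1} in the case of $\mathscr{M}^{\alpha}$-shadowing, and Lemma~\ref{Lemma 2.3} in the case of $\mathscr{M}_{\alpha}$-shadowing. The implication \eqref{3.6.2}$\Rightarrow$\eqref{3.6.3} is trivial, since $\N\neq\emptyset$. Finally, \eqref{3.6.3}$\Rightarrow$\eqref{3.6.1} is Lemma~\ref{Lemma 2.2} in the case of $\mathscr{M}^{\alpha}$-shadowing, and Lemma~\ref{Lemma 2.4} in the case of $\mathscr{M}_{\alpha}$-shadowing. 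This closes the cycle and proves the theorem.
\end{proof}
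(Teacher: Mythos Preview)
Your proposal is correct and matches the paper's approach exactly: the paper simply states that the result follows by combining Lemmas~\ref{Lemma 2.1}--\ref{Lemma 2.4}, which is precisely the cycle \eqref{3.6.1}$\Rightarrow$\eqref{3.6.2}$\Rightarrow$\eqref{3.6.3}$\Rightarrow$\eqref{3.6.1} you spell out.
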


\begin{cor}\label{dshad_higher}
For a dynamical system $(X, f)$, the following statements are equivalent:
\begin{enumerate}
\item\label{4.7.1} $f$ has the $\overline{d}$-shadowing property (resp.,
$\underline{d}$-shadowing property);

\item\label{4.7.2} $f^{k}$ has the $\overline{d}$-shadowing property (resp.,
$\underline{d}$-shadowing property) for any $k\in \N$;

\item\label{4.7.3} $f^{k}$ has the $\overline{d}$-shadowing property (resp.,
$\underline{d}$-shadowing property) for some $k\in \N$.
\end{enumerate}
\end{cor}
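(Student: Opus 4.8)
The plan is to derive this corollary directly from Theorem~\ref{M_aiterations} by specializing the parameter $\alpha$. Recall from Definition~\ref{def:F-shadowing} that the $\overline{d}$-shadowing property is by definition the $\F$-shadowing property for $\F=\mathscr{M}^{1/2}$, that is, precisely the $\mathscr{M}^{\alpha}$-shadowing property with $\alpha=1/2$; and that the $\underline{d}$-shadowing property is the $\F$-shadowing property for $\F=\mathscr{M}_{0}$, that is, the $\mathscr{M}_{\alpha}$-shadowing property with $\alpha=0$. Since $1/2\in[0,1)$ and $0\in[0,1)$, Theorem~\ref{M_aiterations} applies with each of these two values.

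Concretely, for the $\overline{d}$-shadowing statement I would invoke Theorem~\ref{M_aiterations} with $\alpha=1/2$: the equivalence of its items \ref{3.6.1}, \ref{3.6.2}, \ref{3.6.3} for $\mathscr{M}^{1/2}$-shadowing is exactly the equivalence of \ref{4.7.1}, \ref{4.7.2}, \ref{4.7.3} for $\overline{d}$-shadowing. For the $\underline{d}$-shadowing statement I would do the same with $\alpha=0$ and the $\mathscr{M}_{\alpha}$-version. This exhausts the content of the corollary; the only thing used beyond Theorem~\ref{M_aiterations} is the translation of notation fixed in Definition~\ref{def:F-shadowing}.

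There is essentially no obstacle here, since the work has already been carried out in Lemmas~\ref{Lemma 2.1}--\ref{Lemma 2.4}. Should one want a proof that does not pass through Theorem~\ref{M_aiterations}, the same block-grouping scheme would be used: given a $\delta'$-ergodic pseudo-orbit of $f$ one cuts it into consecutive blocks of length $k$ to manufacture, after shrinking the error constants by the uniform continuity of $f,\ldots,f^{k-1}$, a $\delta$-ergodic pseudo-orbit of $f^{k}$; one then shadows the latter and ``spreads out'' the shadowing point over the blocks, controlling the densities of the relevant index sets in $[0,n)$ versus $[0,\lfloor n/k\rfloor)$ by means of \eqref{*} and \eqref{**} exactly as in those lemmas. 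But this is unnecessary once Theorem~\ref{M_aiterations} is available.
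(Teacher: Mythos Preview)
Your proposal is correct and is exactly the approach the paper takes: the corollary is stated without proof immediately after Theorem~\ref{M_aiterations}, and specializing $\alpha=1/2$ in the $\mathscr{M}^{\alpha}$-case and $\alpha=0$ in the $\mathscr{M}_{\alpha}$-case (via Definition~\ref{def:F-shadowing}) is precisely what is intended.
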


\begin{cor}\label{Corollary XX}
Let $f: X\longrightarrow X$ be a surjection. If $(X, f)$ has
the $\overline{d}$-shadowing property, then it is chain mixing.
\end{cor}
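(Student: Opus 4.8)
The plan is to reduce the statement to chain transitivity and then bootstrap. First I would prove the auxiliary claim that \emph{every surjection with the $\overline d$-shadowing property is chain transitive}. Granting this, the corollary follows quickly: by Corollary~\ref{dshad_higher} each iterate $f^n$ still has the $\overline d$-shadowing property, and it is obviously surjective, so $(X,f^n)$ is chain transitive for every $n\ge 1$; the theorem of Richeson and Wiseman recalled in Section~2 (\cite{RW2008}) then forces $(X,f)$ to be chain mixing.

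To prove the claim I would fix $x,y\in X$ and $\delta>0$ and build, from a carefully chosen ergodic pseudo-orbit, a $\delta$-chain from $x$ to $y$. Using uniform continuity, choose $\eps\in(0,\delta)$ so that $d(a,b)<\eps$ implies $d(f(a),f(b))<\delta$, and let $\delta_0\in(0,\delta)$ witness the $\overline d$-shadowing property for this $\eps$. Then I would construct a pseudo-orbit $\xi=\set{x_i}_{i\ge 0}$ made of consecutive \emph{phases}, the $k$-th phase being an honest orbit block $x,f(x),\dots,f^{k-1}(x)$ (an ``$x$-segment'') followed by an honest orbit block $w_k,f(w_k),\dots,f^{k}(w_k)=y$ (a ``$y$-segment''), where $w_k$ is picked using surjectivity of $f$ so that $f^{k}(w_k)=y$; the next phase restarts at $x$. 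Since all steps inside a segment are exact, the only indices $i$ with $d(f(x_i),x_{i+1})\ge\delta_0$ are the two junctions of each phase, and since there are only $O(\sqrt n)$ phases among the first $n$ coordinates these have density $0$; hence $\xi$ is a $\delta_0$-ergodic pseudo-orbit, and I obtain $z\in X$ with $\overline d\big(\Lambda(z,\xi,\eps)\big)>\tfrac12$.

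The heart of the argument is then a density count. Writing $\Lambda:=\Lambda(z,\xi,\eps)$ and letting $E_x,E_y,J$ be the sets of coordinates lying in an $x$-segment, in a $y$-segment, and at a junction, respectively, the linear growth of the phases gives $d(E_x)=d(E_y)=\tfrac12$ and $d(J)=0$. Subadditivity of upper density applied to $\Lambda\subseteq(\Lambda\cap E_x)\cup(\Lambda\cap E_y)\cup(\Lambda\cap J)$ then forces $\overline d(\Lambda\cap E_x)\ge\overline d(\Lambda)-d(E_y)-d(J)>0$ and, symmetrically, $\overline d(\Lambda\cap E_y)>0$; in particular both sets are infinite. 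The coordinates sitting at the very start of an $x$-segment, and those sitting at the very end of a $y$-segment, occur at most once per phase and hence have density $0$; discarding them I can still pick $i\in\Lambda\cap E_x$ with $x_i=f^{l}(x)$, $l\ge 1$, and then $j>i$ in $\Lambda\cap E_y$ with $x_j=f^{p}(w_{k})$ for its phase $k$, $p\le k-1$, so $f^{k-p}(x_j)=y$ with $k-p\ge 1$. Concatenating the $\delta$-chain $x,f(x),\dots,f^{l-1}(x),f^i(z)$ (last step of length $d(f^{l}(x),f^i(z))=d(x_i,f^i(z))<\eps<\delta$), the honest orbit segment $f^i(z),f^{i+1}(z),\dots,f^{j}(z)$, and the $\delta$-chain $f^{j}(z),f(x_j),f^2(x_j),\dots,f^{k-p}(x_j)=y$ (first step of length $d(f(f^{j}(z)),f(x_j))<\delta$ by the choice of $\eps$, since $d(f^{j}(z),x_j)<\eps$) produces a $\delta$-chain from $x$ to $y$, establishing chain transitivity.

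I expect the main obstacle to be the density bookkeeping rather than any isolated hard step: the phases must be shaped so that the junctions stay sparse (so that $\xi$ is genuinely a $\delta_0$-ergodic pseudo-orbit) while the $x$-part and the $y$-part each acquire asymptotic density exactly $\tfrac12$, which is precisely what is needed for a set of upper density strictly above $\tfrac12$ to be forced to meet both of them infinitely often. Surjectivity of $f$ enters only once, to arrange that every $y$-segment terminates precisely at $y$.
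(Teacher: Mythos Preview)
Your overall strategy matches the paper's exactly: show that $(X,f^n)$ is chain transitive for every $n\ge 1$ via Corollary~\ref{dshad_higher}, and then invoke the Richeson--Wiseman result \cite{RW2008}. The paper's proof is a two-line affair, simply citing \cite[Theorem~2.2]{DH2010} for the fact that a surjective system with the $\overline d$-shadowing property is chain transitive, whereas you supply a direct, self-contained argument for this step. Your argument is correct---the density bookkeeping and the chain concatenation both check out---and is in the same spirit as the constructions the paper itself uses later, in Theorem~\ref{Theorem 8.2} and Lemma~\ref{Lemma 8.6} (building an ergodic pseudo-orbit from alternating orbit blocks of two points, with block lengths growing linearly so that each half has density $\tfrac12$ and a set of upper density $>\tfrac12$ is forced to meet both). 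So your route is just a more elementary, citation-free version of the same proof.
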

\begin{proof}
Dynamical system $(X,f^n)$ is chain transitive for every $n\geq 1$, by \cite[Theorem 2.2]{DH2010} and Corollary \ref{dshad_higher}.
So it is chain mixing by \cite[Corollary 12]{RW2008}.
\end{proof}
\begin{exa}\label{Example 4.4}
Let $X=\{a_{1}, a_{2}\}$ be any two distinct points with the discrete metric $d$ and let
$f: X\longrightarrow X$ be the identity map. It is easy to see that for every
sequence $\{x_{i}\}_{i=0}^{\infty}\subset X$, there exists $j\in \{1, 2\}$ such that $\overline{d}\left(\left\{i\in \Zp:
x_{i}=a_{j}\right\}\right)\geq 1/2$. Hence, $\overline{d}(\Lambda(a_{j}, \{x_{i}\}_{i=0}^{\infty},
f, \eps))\geq 1/2$ holds for any $\eps >0$. This means that $f$ has the $\mathscr{M}^{\alpha}$-shadowing
property for any $\alpha\in\left[0, 1/2\right)$. However, it is clear that $f$
is not chain mixing.
\end{exa}

\begin{rem}Example~\ref{Example 4.4}
shows that, in general, the $\mathscr{M}^{\alpha}$-shadowing property does not
imply chain mxing when $\alpha\in \left[0, 1/2\right)$. By Corollary~\ref{Corollary XX}, however,
the $\mathscr{M}_{\alpha}$-shadowing property implies chain mixing, provided that $f$ is surjective and
$\alpha\in\left[1/2, 1\right).$
\end{rem}

\section{AASP implies ASP}
The aim of this section is to prove that the average shadowing
property is a consequence of the asymptotic average shadowing
property. The result works for general cases (no assumption that the
map is onto, see \cite[Theorem 3.7]{KKO14}).

\begin{thm}\label{Theorem 5.1}
If a dynamical system $(X, f)$ has the weak asymptotic average
shadowing property, then it also has the average shadowing property.
\end{thm}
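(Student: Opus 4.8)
The plan is to deduce the average shadowing property (ASP) from the weak asymptotic average shadowing property by turning each average-pseudo-orbit into an asymptotic average pseudo-orbit that the latter can trace, and then transferring the trace back. Fix $\eps>0$. Using uniform continuity of $f$, choose $\gamma\in(0,\eps/4)$ such that $d(x,y)<\gamma$ forces $d(f^j(x),f^j(y))<\eps/4$ for all $j$ in a fixed finite range, and let $\delta\in(0,\gamma)$ be a parameter to be pinned down at the very end. Let $\xi=\{x_i\}_{i=0}^\infty$ be a $\delta$-average-pseudo-orbit with threshold $N$, so that $\sum_{i=k}^{k+m-1}d(f(x_i),x_{i+1})<\delta m$ for \emph{every} $k$ and every $m\geq N$; it is this uniformity in $k$, absent for arbitrary asymptotic average pseudo-orbits, that the construction will exploit. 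We must produce $z\in X$ with $\limsup_n\frac1n\sum_{i<n}d(f^i(z),x_i)<\eps$.

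I cut $\xi$ into consecutive blocks of lengths $L_1<L_2<\cdots\to\infty$ (all $\geq N$) and assemble an asymptotic average pseudo-orbit $\eta=\{y_i\}$ out of these blocks, interspersed with genuine $f$-orbit segments of rapidly growing lengths $R_k$; since a genuine orbit has zero error, only the $\xi$-blocks contribute to $\frac1n\sum_{i<n}d(f(y_i),y_{i+1})$, and by the uniform window bound that contribution is at most $\delta\cdot(\sum_{j\leq k}L_j)/(\sum_{j\leq k}(L_j+R_j))$ on the relevant ranges, plus $\diam X$ per junction; with $R_k$ chosen to outgrow the partial sums of the $L_j$, this tends to $0$, so $\eta$ is genuinely asymptotic. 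Applying weak asymptotic average shadowing to $\eta$ with a tolerance $\eps'$ proportional to $\eps$ yields $z\in X$ with $\frac1n\sum_{i<n}d(f^i(z),y_i)<\eps'$ for all large $n$. Writing $P_k$ for the start of the $k$-th $\xi$-block, so that $y_{P_k+t}=x_t$ for $t<L_k$, and putting $w_k:=f^{P_k}(z)$, subtracting the window estimate at $n=P_k$ from the one at $n=P_{k+1}$ gives $\frac1{L_k}\sum_{t<L_k}d(f^t(w_k),x_t)<\frac{P_k+L_k}{L_k}\eps'$; when the growth rates are arranged so that each block dominates the accumulated history, this is $<\eps$ for large $k$. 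A limit point $w$ of $\{w_k\}$, together with continuity of $f$ and the triangle inequality, then gives $\limsup_n\frac1n\sum_{i<n}d(f^i(w),x_i)<\eps$. Collecting the constraints on $\delta$, $\eps'$ and the growth rates used along the way supplies the $\delta$ demanded by ASP, with no surjectivity hypothesis anywhere.

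The step I expect to be the main obstacle is the tension built into the construction of $\eta$. To make $\eta$ a \emph{genuine} asymptotic average pseudo-orbit --- error density tending to $0$, not merely bounded by $\delta$ --- the $\xi$-blocks, which carry $\approx\delta$ error per unit length, must occupy a vanishing fraction of $[0,n)$; but to pull a trace of $\eta$ back to a trace of $\xi$ one wants the $\xi$-blocks \emph{not} too sparse, because a small average of $d(f^i(z),y_i)$ over a long window $[0,P_{k+1})$ says nothing about a much shorter sub-window, nor about windows further along. (That one cannot bypass this by simply finding an asymptotic average pseudo-orbit uniformly close on average to $\xi$ is already shown by the identity on the circle together with $x_i=i\delta\bmod 1$, which is at positive average distance from every asymptotic average pseudo-orbit.) Reconciling the two requirements --- via a carefully staged hierarchy of growth rates so that each $\xi$-block swamps everything before it, and, should the bare limit-point argument prove too weak, via a second application of weak asymptotic average shadowing after gluing the genuine $f$-orbit segments of the $w_k$ into a fresh asymptotic average pseudo-orbit whose errors again live on a density-zero set --- is the only part of the argument that is not routine, and it is precisely where the uniformity in $k$ of the defining inequality for $\delta$-average-pseudo-orbits does real work.
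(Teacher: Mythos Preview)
Your approach is genuinely different from the paper's, and the tension you correctly flag in your last paragraph is not a mere technical nuisance but a real obstruction that your outline does not overcome. Concretely: for $\eta$ to be an \emph{asymptotic} average pseudo-orbit, the $\xi$-blocks (each carrying average error $\approx\delta$) must occupy a vanishing fraction of $[0,n)$, so that along the $k$-th $\xi$-block one has $L_k=o(P_k)$. But your extraction of a useful bound on the sub-window, $\frac{1}{L_k}\sum_{t<L_k}d(f^t(w_k),x_t)<\frac{P_k+L_k}{L_k}\eps'$, is only useful when $P_k=O(L_k)$. These two requirements are incompatible for a \emph{fixed} $\delta>0$: if $L_k$ dominates $P_k$, then at the right end of each $\xi$-block the running average of $d(f(y_i),y_{i+1})$ is bounded below by a positive multiple of $\delta$ and does not tend to $0$. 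No choice of growth rates resolves this, and your ``second application'' fix inherits the same defect, since the $w_k$ it feeds in were obtained only with the blown-up tolerance $\frac{P_k+L_k}{L_k}\eps'$. The limit-point step is also not justified: good averages for $w_k$ on $[0,L_k)$ do not pass to a limit because continuity controls only finitely many iterates at once.

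The paper sidesteps this entirely by arguing by contradiction. Assuming ASP fails at level $\eps$, one gets for each $k$ a $1/k$-average-pseudo-orbit $\beta^{(k)}$ that \emph{no} point $\eps$-shadows on average. A compactness argument then produces, for each $k$, a finite list of lengths $L_1^{(k)},\ldots,L^{(k)}_{k_k}$ such that every $z\in X$ fails the $\eps/2$-average test on at least one of them. Concatenating initial segments of the $\beta^{(k)}$ (with rapidly growing lengths $m_k$) yields a single sequence $\xi$; because the $k$-th piece has error $\approx 1/k$, the average error of $\xi$ tends to $0$ automatically---no sparseness is needed, and the $\xi$-blocks in fact occupy essentially all of $[0,n)$. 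The finite witness lists then let one show that no point $\eps/2$-shadows $\xi$ on average, contradicting weak AASP. The key idea your plan lacks is precisely this: let the parameter $\delta$ vary along the construction (via the sequence $\beta^{(k)}$) rather than fixing it, so that the asymptotic-average condition comes for free and the quantifier ``for every $\delta$'' is handled by the contradiction, not by any single tracing.
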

\begin{proof}
Suppose that $f$ does not have the average shadowing property. Then,
there exists $\eps>0$ such that for any $k\in \N$,
there exists a $1/k$-average-pseudo-orbit
$\beta^{(k)}:=\set{\beta^{(k)}_{i}}_{i=0}^{\infty}$ which
is not $\eps$-shadowed on average by any point in $X$, i.e.,
\begin{equation}\label{5.1}
\forall z\in X,\ \ \limsup_{k\rightarrow\infty}
\frac{1}{n}\sum_{i=0}^{n-1}d(f^{i}(z), \beta^{(k)}_{i})\geq \eps.
\end{equation}

Now, we construct an asymptotic average pseudo-orbit $\xi$
which is not $\eps/2$-shadowed on average by any
point in $X$. For every $k\in \N$, the sequence $\beta^{(k)}$ is a
 $1/k$-average-pseudo-orbit; hence, there exists $N_{k}\in \N$
 such that for all integers $n\geq N_{k}$ and $i\geq 0$,
\begin{equation}\label{5.2}
\frac{1}{n}\sum_{j=0}^{n-1}d(f(\beta_{j+i}^{(k)}), \beta_{j+i+1}^{(k)})<\frac{1}{k}.
\end{equation}
Clearly, we may assume that $\set{N_{k}}_{k=1}^{\infty}$
is a strictly increasing sequence.

Put $m_{1}=2^{N_{2}}$ and define inductively a sequence $m_{2},
m_{3}, \ldots$ in the following way. Suppose that we have already
defined $m_n$ for some $n\geq 1$. Observe that by \eqref{5.1}, for
any $z\in X$ and any $k,N>0$ there exist $l>N$
and $\eta>0$ such that if $d(z,y)<\eta$ then
$$\frac{1}{l+1}\sum_{i=0}^{l}d(f^{i}(y), \beta^{(k)}_{i})\geq \frac{\eps}{2}.$$
In particular, by compactness of $X$ there exist
$k_{n+1}\in \N$ and positive integers
$$
L^{(n+1)}_{1}, L^{(n+1)}_{2}, \ldots, L^{(n+1)}_{k_{n+1}}\geq 2^{(n+1)m_n},
$$
such that for any $z\in X$ there exists
$1\leq i \leq k_{n+1}$ such that
\begin{equation}\label{5.3}
\frac{1}{L^{(n+1)}_{i}+1}\sum_{j=0}^{L^{(n+1)}_{i}}d(f^{j}(z),
\beta_{j}^{(n+1)})\geq \frac{\eps}{2}.
\end{equation}
Denote
$$
m_{n+1}=\max\left\{2^{N_{n+2}}, L^{(n+1)}_{1},
L^{(n+1)}_{2}, \ldots, L^{(n+1)}_{k_{n+1}}\right\}.
$$
And consider the sequence
$$
\xi=\left\{\xi_{i}\right\}_{i=0}^{\infty}=\beta_{0}^{(1)}
\beta_{1}^{(1)}\cdots\beta_{m_{1}}^{(1)}
\beta_{0}^{(2)}\beta_{1}^{(2)}\cdots
\beta_{m_{2}}^{(2)}\cdots\beta_{0}^{(n)}
\beta_{1}^{(n)}\cdots\beta_{m_{n}}^{(n)} \cdots.
$$
We claim that $\xi$ is an asymptotic average pseudo-orbit of $f$.
Denote $M_{0}=0$ and for $n>0$ put $M_{n}=\sum_{i=1}^n (m_i+1)$.
Observe that $M_n\leq n(m_n+1)\leq (n+1)m_n$ and hence $M_{n+1}\geq
m_{n+1} \geq 2^{M_n}$. Similarly $M_n\geq 2^{N_{n+1}}> N_{n+1}$.

Fix an $n>0$ and an integer $j\in [M_n,M_{n+1})$.
By the definition of $\xi$ we obtain
\begin{eqnarray*}
&&\frac{1}{j}\sum_{i=0}^{j-1} d(f(\xi_{i}), \xi_{i+1})\\
&&\quad = \frac{1}{j}\left[
\sum_{k=1}^{n}\sum_{i=M_{k-1}}^{M_{k}-2}d(f(\xi_{i}), \xi_{i+1})+\sum_{i=1}
^{n}d(f(\xi_{M_{i}-1}), \xi_{M_{i}})+
\sum_{i=M_{n}}^{j-1}d(f(\xi_{i}), \xi_{i+1})\right].
\end{eqnarray*}
Note that
\begin{eqnarray*}
&&\frac{1}{j}
\sum_{k=1}^{n}\sum_{i=M_{k-1}}^{M_{k}-2}d(f(\xi_{i}), \xi_{i+1})
+\frac{1}{j}\sum_{i=1}
^{n}d(f(\xi_{M_{i}-1}), \xi_{M_{i}})\\
&&\quad\quad  \leq
\sum_{k=1}^{n}\frac{m_{k}}{j}
\frac{1}{m_{k}}\sum_{i=M_{k-1}}^{M_{k}-2}d(f(\xi_{i}), \xi_{i+1})
+\frac{n \diam X}{j}\\
&&\quad\quad  \leq \sum_{k=1}^{n}\frac{m_{k}}{j k}+\frac{n \diam X}{2^n}
\leq \frac{1}{n}+\sum_{k=1}^{n-1}\frac{m_{k}}{j k}+\frac{n \diam X}{2^n}\\
&&\quad\quad  \leq \frac{1}{n}+\frac{M_{n-1}}{M_n}+\frac{n \diam X}{2^n}
\leq \frac{1}{n}+\frac{M_{n-1}}{2^{M_{n-1}}}+\frac{n \diam X}{2^n}.
\end{eqnarray*}
Additionally, observe that if $j\leq M_n + N_{n+1}$ then
$$
\frac{1}{j}\sum_{i=M_{n}}^{j-1}d(f(\xi_{i}), \xi_{i+1})\leq
\frac{N_{n+1}}{j}\diam X\leq \frac{N_{n+1}}{2^{N_{n+1}}}\diam X,
$$
and in the second case of $j> M_n + N_{n+1}$, by the choice of
$N_{n+1}$ we immediately obtain that
$$
\frac{1}{j}\sum_{i=M_{n}}^{j-1}d(f(\xi_{i}), \xi_{i+1})\leq
\frac{1}{j}\sum_{i=M_{n}}^{j-1}d(f(\beta^{(n+1)}_{i-M_n}), \beta^{(n+1)}_{i-M_n+1})
\leq \frac{1}{n+1}.
$$
We have just proved that $\lim_{j\to \infty} \frac{1}{j}
\sum_{i=0}^{j-1} d(f(\xi_{i}), \xi_{i+1})=0$, so indeed $\xi$ is an
asymptotic average pseudo-orbit and the claim holds.

Fix $z\in X$. By \eqref{5.3}, for any $n\in \mathbb{N}$ and
point $f^{M_n}(z)$ we can select $1\leq i_{n} \leq k_{n+1}$ such
that
\[
\frac{1}{L^{(n+1)}_{i_{n}}+1}
\sum_{j=M_{n}}^{M_{n}+L_{i_n}^{(n+1)}} d(f^{j}(z), \xi_{j})=\frac{1}{L^{(n+1)+1}_{i_n}}
\sum_{j=0}^{L^{(n+1)}_{i_n}}d(f^{j}(f^{M_{n}}(z)),
\beta_{j}^{(n+1)})\geq \frac{\eps}{2}.
\]
Therefore,
\begin{eqnarray*}
&&\limsup_{n\rightarrow\infty}\frac{1}{n}
\sum_{j=0}^{n-1}d(f^{j}(z), \xi_{j})\\
&&\quad\quad\ \geq\ \limsup_{n\rightarrow\infty}
\frac{1}{M_{n}+L_{i_n}^{(n+1)}+1}
\sum_{j=0}^{M_{n}+L_{i_n}^{(n+1)}}d(f^{j}(z), \xi_{j})\\
&&\quad\quad\ \geq\ \limsup_{n\rightarrow\infty}
\frac{L^{(n+1)}_{i_n}+1}{M_{n}+L_{i_n}^{(n+1)}+1}\frac{1}{L^{(n+1)}_{i_n}+1}
\sum_{j=M_{n}}^{M_{n}+L_{i_n}^{(n+1)}}d(f^{j}(z), \xi_{j})\\
&&\quad\quad\ \geq\ \frac{\eps}{2}\limsup_{n\rightarrow\infty}
\frac{L^{(n+1)}_{i_n}+1}{M_{n}+L_{i_n}^{(n+1)}+1}
\\
&&\quad\quad\ \geq\ \frac{\eps}{2}\limsup_{n\rightarrow\infty}
\frac{2^{M_{n}}}{M_{n}+2^{M_{n}}+1}= \frac{\eps}{2}.
\end{eqnarray*}
This means that $\xi$ is not $\eps/2$-shadowed on
average by any point in $X$. Hence, $f$ does not have
the weak average shadowing property.
\end{proof}
\begin{thm}\label{Theorem 5.2}
If a dynamical system $(X, f)$ has the asymptotic average
shadowing property, then it also has the average shadowing property.
\end{thm}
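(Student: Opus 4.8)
The plan is to obtain this as an immediate consequence of Theorem~\ref{Theorem 5.1}, after the trivial observation that the asymptotic average shadowing property is formally stronger than the weak asymptotic average shadowing property. So the only thing to check is the implication AASP $\Rightarrow$ weak AASP, and then Theorem~\ref{Theorem 5.1} does the rest.

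First I would argue that $(X,f)$ has the weak asymptotic average shadowing property. Let $\varepsilon>0$ and let $\xi=\set{x_i}_{i=0}^\infty$ be an asymptotic average pseudo-orbit of $f$. By the asymptotic average shadowing property there is a point $z\in X$ with
$$
\lim_{n\rightarrow\infty}\frac{1}{n}\sum_{i=0}^{n-1}d(f^{i}(z), x_{i})=0.
$$
Since a sequence converging to $0$ has $\limsup$ equal to $0$, in particular
$$
\limsup_{n\rightarrow\infty}\frac{1}{n}\sum_{i=0}^{n-1}d(f^{i}(z), x_{i})=0<\varepsilon,
$$
so the same $z$ witnesses the required inequality. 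As $\varepsilon>0$ and $\xi$ were arbitrary, $(X,f)$ has the weak asymptotic average shadowing property.

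Then I would simply apply Theorem~\ref{Theorem 5.1} to conclude that $(X,f)$ has the average shadowing property, which finishes the proof. There is essentially no obstacle in this statement: all the substance of the implication AASP $\Rightarrow$ ASP is packed into Theorem~\ref{Theorem 5.1} (the construction of the concatenated pseudo-orbit $\xi$ with block lengths $m_n$ growing doubly exponentially, chosen so that each window of length $M_n+L^{(n+1)}_{i_n}$ already forces an average tracing error $\geq \eps/2$). The present theorem is a purely formal corollary, using only that the defining condition of AASP gives a limit, hence a $\limsup$ below every positive threshold, i.e. precisely the defining condition of weak AASP.
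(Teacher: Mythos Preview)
Your proof is correct and matches the paper's own argument essentially verbatim: the paper also observes that AASP trivially implies the weak asymptotic average shadowing property by definition, and then invokes Theorem~\ref{Theorem 5.1}.
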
\label{Theorem 5.3}
\begin{proof}
It directly follows from the definition that the asymptotic average
shadowing property implies the weak asymptotic average shadowing
property, hence the result immediately follows by
Theorem~\ref{Theorem 5.1}.
\end{proof}

Careful readers can check that a slight change in the proof of Theorem~\ref{Theorem 5.1} leads to the following
theorem.
\begin{thm}
If a dynamical system $(X, f)$ has the weak asymptotic average
shadowing property, then for any $\eps>0$, there exists $\delta>0$
such that every $\delta$-asymptotic-average-pseudo-orbit is $\eps$-shadowed
on average by some point in $X$.
\end{thm}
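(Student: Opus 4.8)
The plan is to repeat the proof of Theorem~\ref{Theorem 5.1} almost verbatim, after making the following observation: the hypothesis there that each approximate orbit $\beta^{(k)}$ is a $1/k$-average-pseudo-orbit is used \emph{only} through the unshifted Ces\`aro bound $\frac1n\sum_{j=0}^{n-1}d(f(\beta^{(k)}_j),\beta^{(k)}_{j+1})<1/k$ for all large $n$; the shift parameter appearing in the definition of an average-pseudo-orbit is never actually invoked (every $\beta$-sum occurring in the estimates starts at the first index of a block). This unshifted bound is exactly what the definition of a $1/k$-asymptotic-average-pseudo-orbit provides, so the entire construction carries over.

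First I would argue by contradiction. If the conclusion fails, there is $\eps>0$ such that for every $k\in\N$ there is a $1/k$-asymptotic-average-pseudo-orbit $\beta^{(k)}=\{\beta^{(k)}_i\}_{i=0}^\infty$ that is not $\eps$-shadowed on average by any point, i.e.\ $\limsup_{n\to\infty}\frac1n\sum_{i=0}^{n-1}d(f^i(z),\beta^{(k)}_i)\geq\eps$ for every $z\in X$; this is the analogue of~\eqref{5.1}. By the definition of an asymptotic-average-pseudo-orbit, pick a strictly increasing sequence $\{N_k\}_{k=1}^\infty$ with $\frac1n\sum_{j=0}^{n-1}d(f(\beta^{(k)}_j),\beta^{(k)}_{j+1})<1/k$ whenever $n\geq N_k$. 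Then, exactly as in Theorem~\ref{Theorem 5.1}, set $m_1=2^{N_2}$, and having chosen $m_n$, use compactness of $X$ to get $k_{n+1}\in\N$ and integers $L^{(n+1)}_1,\ldots,L^{(n+1)}_{k_{n+1}}\geq 2^{(n+1)m_n}$ for which~\eqref{5.3} holds, put $m_{n+1}=\max\{2^{N_{n+2}},L^{(n+1)}_1,\ldots,L^{(n+1)}_{k_{n+1}}\}$, and form $\xi=\beta^{(1)}_0\cdots\beta^{(1)}_{m_1}\beta^{(2)}_0\cdots\beta^{(2)}_{m_2}\cdots$ with $M_0=0$, $M_n=\sum_{i=1}^n(m_i+1)$.

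The verification that $\xi$ is an asymptotic average pseudo-orbit is then word-for-word that of Theorem~\ref{Theorem 5.1}: each block of $\xi$ starts at index $0$ of the corresponding $\beta^{(k)}$ and $m_k>N_k$, so the within-block average equals $\frac1{m_k}\sum_{l=0}^{m_k-1}d(f(\beta^{(k)}_l),\beta^{(k)}_{l+1})<1/k$; the $n$ inter-block jumps contribute at most $n\diam X/j\to0$ (since $j\geq M_n\geq 2^n$); and the final partial block is handled by splitting into the cases $j\leq M_n+N_{n+1}$ and $j>M_n+N_{n+1}$ exactly as before. Likewise, fixing $z\in X$ and applying~\eqref{5.3} to $f^{M_n}(z)$ produces $1\leq i_n\leq k_{n+1}$ with $\frac1{L^{(n+1)}_{i_n}+1}\sum_{j=M_n}^{M_n+L^{(n+1)}_{i_n}}d(f^j(z),\xi_j)\geq\eps/2$; comparing this with the Ces\`aro average up to time $M_n+L^{(n+1)}_{i_n}$ gives $\limsup_{n\to\infty}\frac1n\sum_{j=0}^{n-1}d(f^j(z),\xi_j)\geq\eps/2$ for every $z\in X$, so $\xi$ is not $\eps/2$-shadowed on average. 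This contradicts the weak asymptotic average shadowing property, completing the argument.

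The only point needing care — the \emph{slight change} alluded to before the statement — is the first observation: one must check that in the proof of Theorem~\ref{Theorem 5.1} the shift parameter in the average-pseudo-orbit condition is genuinely never used, so that replacing average-pseudo-orbits by asymptotic-average-pseudo-orbits (which control only the $i=0$ Ces\`aro sums) leaves every estimate intact. Since both the inductive construction of the $m_n$ and the two lower-bound estimates involve only Ces\`aro averages taken from the start of a block, no further modification is required.
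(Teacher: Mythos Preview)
Your proposal is correct and matches the paper's own approach: the paper simply remarks that ``a slight change in the proof of Theorem~\ref{Theorem 5.1}'' yields the result, and the change you identify---that the shift parameter in the average-pseudo-orbit condition \eqref{5.2} is never actually used, so one may replace it by the unshifted Ces\`aro bound furnished by the asymptotic-average-pseudo-orbit definition---is precisely the intended modification. Your verification that every $\beta$-sum in the estimates begins at index $0$ of its block is the only point requiring attention, and you have addressed it.
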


\section{Further studies on the $\mathscr{M}_{\alpha}$-shadowing property and ASP}\label{sec:equivASP}

In this section, we continue our investigation of the $\mathscr{M}_{\alpha}$-shadowing property, and as a byproduct also
the average shadowing property. Recently, it was proved in \cite[Theorem 5]{ODH2014} that for a surjection, the
average shadowing property implies the $\underline{d}$-shadowing
property and the authors also provided an example \cite[Example 20]{ODH2014} showing that
this implication can not be reversed.

In what follows, we provide a few equivalent conditions to the $\mathscr{M}_{\alpha}$-shadowing property (see Theorem \ref{Theorem 6.5}),
which will reveal the reasons why \cite[Theorem 5]{ODH2014} actually holds and why there is no chance for reverse implication.
It was proved in \cite[Theorem 3.6]{KKO14} that, under the
assumption of chain mixing, the average shadowing property is all
about average shadowing of pseudo-orbits.

Wherever we cannot guarantee chain mixing, the situation is more complex.
Still, Theorem \ref{Theorem 6.5} allows us to pass from average pseudo-orbits to asymptotic average pseudo-orbits
shadowed with the spatial scale (Theorem \ref{Theorem 6.5}~\eqref{6.5.3}) or
ergodic pseudo-orbits shadowed with the time scale (Theorem
\ref{Theorem 6.5}~\eqref{6.5.2}). With this tool at hand, we are able to characterize the average shadowing property
equivalently from both space (weak asymptotic average shadowing
property) and time ($\M_{\alpha}$-shadowing property) perspectives.

Before we prove the main results of this section, we need the following.

\begin{lem}\label{Lemma 6.1}
Let $(X, f)$ be a dynamical system. Then, for any $\delta>0$ and any
$\delta/2$-ergodic pseudo-orbit $\set{x_{i}}_{i=0}^{\infty}$ of $f$,
there exists a $\delta$-average-pseudo-orbit
$\left\{y_{i}\right\}_{i=0}^{\infty}$ of $f$ such that
$d\left(\left\{i\in \mathbb{N}_{0}: x_{i}\neq
y_{i}\right\}\right)=0$.
\end{lem}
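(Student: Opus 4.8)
The plan is to convert a $\delta/2$-ergodic pseudo-orbit $\{x_i\}$ into a genuine $\delta$-average-pseudo-orbit $\{y_i\}$ by surgically removing the "bad" steps—those indices $i$ where $d(f(x_i),x_{i+1})\geq\delta/2$—while changing $\{x_i\}$ only on a set of density zero. The difficulty is that one cannot simply delete bad coordinates (that would re-index the sequence); instead, when $i$ is a bad index, I would like to overwrite a short block of coordinates so that the sequence locally follows a true orbit segment and the jump is absorbed. Concretely, let $B=\{i\in\Zp: d(f(x_i),x_{i+1})\geq\delta/2\}=\Lambda^c(\{x_i\},f,\delta/2)$; by hypothesis $d(B)=0$ (in fact $\lim_n\frac1n|B\cap[0,n)|=0$). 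The key combinatorial fact I would record first is that since $B$ has density zero, one can choose an increasing sequence of "window lengths" so that around each bad index a correction window can be inserted whose total length is still negligible in density; more precisely, for each $N$ the bad indices in $[0,N)$ number $o(N)$, so even replacing a block of length $\ell_i$ around the $i$-th bad index—with $\ell_i$ growing slowly—keeps the modified set of density zero, provided the windows are spread out, which they are because $B$ itself is sparse.

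The construction I would carry out: process the bad indices $i_1<i_2<\cdots$ in order. At a bad index $i$, replace the coordinates $x_i, x_{i+1},\ldots$ on a window $[i, i+\ell)$ by the orbit segment $f(x_i), f^2(x_i),\ldots, f^\ell(x_i)$ of the point $f(x_i)$ (so the first new value at position $i+1$ is $f(x_i)$, matching $f(y_i)$ exactly, and each subsequent step inside the window is exact), and then at position $i+\ell$ we incur at most a jump of size $\diam X$ back to $x_{i+\ell}$ (or to whatever the next window dictates). Choosing $\ell$ large—say depending on the position $i$, e.g. $\ell=\ell(i)\to\infty$ slowly enough that the windows cover only a density-zero set but $\ell(i)\to\infty$—guarantees that in any long block $[k, k+n)$ the $d(f(y_j),y_{j+1})$ sum is dominated by the contribution of the window "exit" jumps, of which there are at most $|B\cap[k,k+n+\max\ell)|$ many, each bounded by $\diam X$, plus the exact (zero-cost) interior steps, plus the original good steps which are each $<\delta/2$. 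Since the number of exit jumps in a window of length $n$ is $o(n)$ uniformly (here one uses that $B$ is density zero together with $\ell(i)\to\infty$ to control the windows hitting a given interval), the average $\frac1n\sum_{j=k}^{k+n-1}d(f(y_j),y_{j+1})$ is eventually below $\delta$, uniformly in $k$—which is exactly the definition of a $\delta$-average-pseudo-orbit.

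For the density statement $d(\{i: x_i\neq y_i\})=0$: the set of modified coordinates is contained in $\bigcup_j [i_j, i_j+\ell(i_j))$. To see this has density zero, note $|\{i_j: i_j<n\}|=|B\cap[0,n)|=o(n)$, and with the windows chosen either of bounded individual length or with $\ell(i_j)$ growing but with the bad indices spaced so sparsely that the union's counting function is still $o(n)$ (for instance, fix $\ell$ to be a large enough constant depending only on $\delta$ and how fast $|B\cap[0,n)|/n\to 0$—actually a constant $\ell$ does not suffice to make a single exit jump of size $\diam X$ small, so one genuinely wants $\ell\to\infty$; but then one argues: given $\eta>0$, the bad indices beyond some point $i_{j_0}$ use windows of length $\geq L$, and on each interval $[0,n)$ the measure of the union coming from these is at most $\sum_{j\geq j_0, i_j<n}\ell(i_j)$, and one arranges in the inductive choice of $\ell$ that this stays $\leq\eta n$ for large $n$, which is possible precisely because $|B\cap[0,n)|=o(n)$ allows the "budget" $\ell(i_j)$ to be taken as large as any prescribed slowly-growing function). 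I expect the main obstacle to be this bookkeeping: simultaneously making each window long enough that its single exit jump is negligible on average, yet making the union of all windows density zero. The resolution is a diagonal/inductive choice of the window lengths $\ell(i_j)$ against the known rate $|B\cap[0,n)|=o(n)$, exactly in the spirit of the inductive construction of $\{m_n\}$ in the proof of Theorem~\ref{Theorem 5.1}.
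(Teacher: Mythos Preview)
Your overall strategy---cover the bad set $B=\Lambda^c(\{x_i\},f,\delta/2)$ by windows on which you overwrite $\{x_i\}$ by a genuine orbit segment---is the right idea and matches the paper's. But there is a genuine misconception in your analysis that sends you down an unnecessarily complicated path with a real gap at the end.

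You write that ``a constant $\ell$ does not suffice to make a single exit jump of size $\diam X$ small, so one genuinely wants $\ell\to\infty$.'' This is the error. A single exit jump need not be small; what must be small is the \emph{average} over every interval $[k,k+n)$ with $n$ large, so the relevant quantity is the \emph{number} of exit jumps in such an interval divided by $n$. The paper obtains this with a \emph{fixed} window length $N$, chosen once so that $4\,\diam X/N<\delta/2$, together with one further idea you are missing: rather than placing a window at every bad index, one selects a sparse subsequence $k_1<k_2<\cdots$ of bad indices with $k_{n+1}$ the first element of $B$ that is $\geq k_n+N$. The windows $[k_n,k_n+N)$ then (i) automatically cover \emph{all} of $B$, (ii) are pairwise disjoint, and (iii)---this is the point---in \emph{any} interval $[k,k+n)$ there are at most $n/N+1$ of them, hence at most $2(n/N+1)\leq 4n/N$ indices $i$ with $d(f(y_i),y_{i+1})\geq\delta/2$. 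This gives
\[
\frac{1}{n}\sum_{i=k}^{k+n-1}d(f(y_i),y_{i+1})\leq \frac{4}{N}\,\diam X+\frac{\delta}{2}<\delta
\]
uniformly in $k$ for all $n\geq N$, and the set of modified indices $\mathcal K=\bigcup_{j=0}^{N-1}(K+j)$ with $K=\{k_n\}\subset B$ has density zero since $d(B)=0$.

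Your variant (windows at every $i_j\in B$ with $\ell(i_j)\to\infty$) has a genuine gap at the uniformity step. The hypothesis only gives $|B\cap[0,n)|=o(n)$ for \emph{initial} segments; it says nothing about $|B\cap[k,k+n)|$ uniformly in $k$, and bad indices can cluster heavily in $[k,k+n)$ when $k$ is large. Your assertion that ``the number of exit jumps in a window of length $n$ is $o(n)$ uniformly'' is therefore unjustified, and you give no mechanism to control overlapping windows when bad indices cluster. The paper's device of \emph{spacing} the window anchors at least $N$ apart is precisely what delivers the required uniformity in $k$ without any appeal to a rate on $B$; no diagonal argument in the style of Theorem~\ref{Theorem 5.1} is needed.
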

\begin{proof}
Fix a $\delta>0$ and a $\delta/2$-ergodic pseudo-orbit
$\set{x_{i}}_{i=0}^{\infty}$. Take any sufficiently large positive
integer $N$ to ensure that $4\diam X/N <\delta/2$. If there is $n>0$
such that $\set{x_{i}}_{i=n}^{\infty}$ is a $\delta/2$-pseudo orbit
then clearly $\set{x_{i}}_{i=0}^{\infty}$ is a
$\delta$-average-pseudo-orbit and there is nothing to prove. Hence,
assume that $\Lambda^c(\set{x_i}_{i=0}^\infty,\delta/2)$ is infinite
and let $\set{n_i}_{i=1}^\infty$ be a strictly increasing sequence
such that $\Lambda^c(\set{x_i}_{i=0}^\infty,\delta/2)=\set{n_i :
i\geq 1}$. We put $k_1=n_1$ and inductively define a sequence
$\set{k_i}_{i=1}^\infty$ by the formula
$$
k_{n+1}=\min \set{j\in \Lambda^{c}(\set{x_i}_{i=0}^\infty, \delta/2): j\geq k_{n}+N},
$$
where $n=1,2,\ldots$. Let $K=\set{k_{n}: n\in \N}$ and $\K=\cup_{i=0}^{N-1}(K+i)$.
It is not hard to verify that the following conditions are satisfied:
\begin{enumerate}[(i)]
\item sets $K$, $K+1$, $\ldots$, $K+(N-1)$ are mutually disjoint;

\item $\Lambda^{c}(\set{x_i}_{i=0}^\infty, \delta/2)=\set{n_{1}, n_{2}, \ldots}\subset\K$;

\item $d(\K)=0$.
\end{enumerate}

Let $\set{y_{i}}_{i=0}^{\infty}$ be a sequence defined by
$$
y_{i}=\begin{cases}
f^{i-k_{n}}(x_{k_{n}}), & \text{ when } i\in \left[k_{n}, k_{n}+N\right)
\text{ for some } n\in \N,\\
x_{i}, & \text{ otherwise},
\end{cases}
$$
and denote (for integers $k\geq 0$, $n>0$)
$$
A^n_k=\left\{i\in \left[k, k+n\right): d(f(y_{i}), y_{i+1})\geq
\frac{\delta}{2}\right\}.
$$
Observe that $|A_k^n |\leq 2\left(\frac{n}{N}+1\right)$.

We claim that $\set{y_i}_{i=0}^\infty$ is a
$\delta$-average-pseudo-orbit. To prove the claim, fix
an $n\geq N$ and a $k\geq 0$. First, observe that if $[k,
k+n)\cap \mathscr{K}=\emptyset$, then
$$
\frac{1}{n}\sum_{i=0}^{n-1} d(f(y_{i+k}), y_{i+k+1})<\frac{\delta}{2}<\delta.
$$
But if $[k, k+n)\cap \mathscr{K}\neq\emptyset$ then
$$
\left|\left\{i\in \left[k, k+n\right): d(f(y_{i}), y_{i+1})\geq
\frac{\delta}{2}\right\}\right|=\left|A_k^n\right|\leq \frac{2(n+N)}{N}\leq \frac{4n}{N},
$$
and hence
\begin{eqnarray*}
\frac{1}{n}\sum_{i=0}^{n-1}
d(f(y_{i+k}), y_{i+k+1})&=&\frac{1}{n}\sum_{i\in A_k^n}d(f(y_{i}), y_{i+1})+
\frac{1}{n}\sum_{i\in \left[k, k+n\right)\setminus A_k^n}d(f(y_{i}), y_{i+1})\\
&\leq&\frac{|A_k^n|}{n}\diam X+\frac{\delta}{2}\leq \frac{4}{N}\diam X+\frac{\delta}{2}\\
&<&\delta.
\end{eqnarray*}
This proves the claim and ends the proof at the same time, since $\set{i\in \N : x_i\neq y_i}\subset \K$.
\end{proof}

\begin{thm}\label{Theorem 6.2}
Let $(X, f)$ be a dynamical system and $\alpha\in \left[0, 1\right)$.
The following statements are equivalent:
\begin{enumerate}
\item\label{6.2.1} $(X, f)$ has the $\mathscr{M}_{\alpha}$-shadowing
property;

\item\label{6.2.2} for every $\eps>0$ and every
asymptotic average pseudo-orbit $\left\{x_{i}\right\}_{i=0}^{\infty}$
of $f$, there exists $z\in X$ such that $\left\{x_{i}\right\}_{i=0}^{\infty}$
is $\mathscr{M}_{\alpha}$-$\eps$-shadowed
by $z$;

\item\label{6.2.3} for every $\eps>0$, there exists $\delta>0$ such that every
$\delta$-average-pseudo-orbit of $f$ is $\mathscr{M}_{\alpha}$-$\eps$-shadowed
by some point in $X$;

\item\label{6.2.4} for every $\eps>0$, there exists $\delta>0$ such that every
$\delta$-asymptotic-average-pseudo-orbit of $f$ is $\mathscr{M}_{\alpha}$-$\eps$-shadowed
by some point in $X$.
\end{enumerate}
\end{thm}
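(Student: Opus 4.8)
The plan is to establish the cycle of implications \eqref{6.2.1} $\Rightarrow$ \eqref{6.2.2} $\Rightarrow$ \eqref{6.2.4} $\Rightarrow$ \eqref{6.2.3} $\Rightarrow$ \eqref{6.2.1}. Three of the four implications are short; the only substantial one is \eqref{6.2.2} $\Rightarrow$ \eqref{6.2.4}, which is a variant of the proof of Theorem~\ref{Theorem 5.1}.

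The three short implications go as follows. For \eqref{6.2.1} $\Rightarrow$ \eqref{6.2.2}, fix $\eps>0$, take $\delta>0$ from the $\M_\alpha$-shadowing property, and let $\set{x_i}_{i=0}^\infty$ be an asymptotic average pseudo-orbit; since $\delta\,|\Lambda^{c}_{n}(\set{x_i}_{i=0}^\infty,f,\delta)|\le\sum_{i=0}^{n-1}d(f(x_i),x_{i+1})$ and the right-hand side is $o(n)$, it is a $\delta$-ergodic pseudo-orbit and is therefore $\M_\alpha$-$\eps$-shadowed. For \eqref{6.2.4} $\Rightarrow$ \eqref{6.2.3}, given $\eps>0$ let $\delta_0>0$ be provided by \eqref{6.2.4} and set $\delta=\delta_0/2$; taking $k=0$ in the definition, every $\delta$-average-pseudo-orbit satisfies $\limsup_{n\to\infty}\frac1n\sum_{i=0}^{n-1}d(f(x_i),x_{i+1})\le\delta<\delta_0$, hence is a $\delta_0$-asymptotic-average-pseudo-orbit and is $\M_\alpha$-$\eps$-shadowed. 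For \eqref{6.2.3} $\Rightarrow$ \eqref{6.2.1}, given $\eps>0$ let $\delta>0$ come from \eqref{6.2.3}; by Lemma~\ref{Lemma 6.1} any $\delta/2$-ergodic pseudo-orbit $\set{x_i}_{i=0}^\infty$ coincides, off a set of indices of density zero, with some $\delta$-average-pseudo-orbit $\set{y_i}_{i=0}^\infty$, which is $\M_\alpha$-$\eps$-shadowed by some $z\in X$; since the symmetric difference of $\Lambda(z,\set{x_i}_{i=0}^\infty,\eps)$ and $\Lambda(z,\set{y_i}_{i=0}^\infty,\eps)$ is contained in that density-zero set, these sets have equal lower density, so $z$ also $\M_\alpha$-$\eps$-shadows $\set{x_i}_{i=0}^\infty$, and $\delta/2$ works for all $\delta/2$-ergodic pseudo-orbits at once.

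For the main implication \eqref{6.2.2} $\Rightarrow$ \eqref{6.2.4} I argue by contradiction, following the scheme of Theorem~\ref{Theorem 5.1}. Suppose there is $\eps_0>0$ such that for every $k\in\N$ there is a $1/k$-asymptotic-average-pseudo-orbit $\beta^{(k)}=\set{\beta^{(k)}_i}_{i=0}^\infty$ which is not $\M_\alpha$-$\eps_0$-shadowed by any point, i.e. $\overline{d}(\Lambda^{c}(w,\beta^{(k)},\eps_0))\ge 1-\alpha$ for every $w\in X$. Fix $k$. For each $w\in X$ choose $n(w)$ arbitrarily large with $\frac1{n(w)}|\Lambda^{c}(w,\beta^{(k)},\eps_0)\cap[0,n(w))|>1-\alpha-\frac1{2k}$; by uniform continuity of $f^0,\dots,f^{n(w)-1}$ there is a neighbourhood $U_w$ of $w$ on which $d(f^i(\cdot),f^i(w))<\eps_0/2$ for all $i<n(w)$, so for every $w'\in U_w$ we get $\frac1{n(w)}|\set{0\le i<n(w): d(f^i(w'),\beta^{(k)}_i)\ge\eps_0/2}|>1-\alpha-\frac1k$. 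A finite subcover $U_{w^{(1)}},\dots,U_{w^{(j_k)}}$ of $X$ yields a finite list $L^{(k)}_1,\dots,L^{(k)}_{j_k}$ (where $L^{(k)}_r=n(w^{(r)})$, chosen as large as we wish relative to everything fixed at stage $k$) such that every $w\in X$ admits an $r$ with
\[
\frac{1}{L^{(k)}_r}\,|\set{0\le i<L^{(k)}_r: d(f^i(w),\beta^{(k)}_i)\ge\eps_0/2}|>1-\alpha-\tfrac1k .
\]
Now form $\xi=\beta^{(1)}_0\cdots\beta^{(1)}_{m_1}\beta^{(2)}_0\cdots\beta^{(2)}_{m_2}\cdots$ exactly as in Theorem~\ref{Theorem 5.1}, with $M_0=0$, $M_n=\sum_{j=1}^n(m_j+1)$, where each $m_k$ is taken so large that $m_k\ge\max_r L^{(k)}_r$, every window of $\beta^{(k)}$ of length at least $m_k$ has average error below $1/k$, and $m_k$ dominates $M_{k-1}$ and the relevant thresholds of $\beta^{(k+1)}$. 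With these choices the computation of Theorem~\ref{Theorem 5.1} gives $\lim_{j\to\infty}\frac1j\sum_{i=0}^{j-1}d(f(\xi_i),\xi_{i+1})=0$, so $\xi$ is an asymptotic average pseudo-orbit. Finally, fix $z\in X$; for each $k$, applying the list above to $w=f^{M_{k-1}}(z)$ and using $f^{M_{k-1}+i}(z)=f^i(w)$ and $\xi_{M_{k-1}+i}=\beta^{(k)}_i$ for $i\le m_k$, we find $r$ with
\[
\frac{1}{M_{k-1}+L^{(k)}_r}\,|\Lambda^{c}(z,\xi,\eps_0/2)\cap[0,M_{k-1}+L^{(k)}_r)|>\frac{L^{(k)}_r}{M_{k-1}+L^{(k)}_r}\left(1-\alpha-\tfrac1k\right).
\]
Since $L^{(k)}_r$ was chosen huge relative to $M_{k-1}$, letting $k\to\infty$ gives $\overline{d}(\Lambda^{c}(z,\xi,\eps_0/2))\ge 1-\alpha$, hence $\underline{d}(\Lambda(z,\xi,\eps_0/2))\le\alpha$, so $z$ does not $\M_\alpha$-$(\eps_0/2)$-shadow $\xi$. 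As $z$ was arbitrary, $\xi$ contradicts \eqref{6.2.2} applied with $\eps=\eps_0/2$.

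The main obstacle is this last implication. The delicate points are resolving the quantifiers in the right order (the witness lengths $L^{(k)}_r$ must be fixed after $M_{k-1}$ but before $m_k$), keeping the concatenation $\xi$ a genuine asymptotic average pseudo-orbit, and, above all, producing a single sequence $\xi$ that defeats every point $z$ simultaneously; the compactness step that turns the pointwise estimate $\overline{d}(\Lambda^{c}(w,\beta^{(k)},\eps_0))\ge 1-\alpha$ into the uniform finite list $L^{(k)}_1,\dots,L^{(k)}_{j_k}$ is exactly what makes this possible. The remaining estimates are routine bookkeeping, essentially identical to the proof of Theorem~\ref{Theorem 5.1}.
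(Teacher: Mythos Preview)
Your proof is correct and follows essentially the same approach as the paper: the three easy implications are identical, and your main implication \eqref{6.2.2}$\Rightarrow$\eqref{6.2.4} uses the same compactness-plus-concatenation scheme that the paper deploys for \eqref{6.2.2}$\Rightarrow$\eqref{6.2.1} (the paper then derives \eqref{6.2.1}$\Rightarrow$\eqref{6.2.3} and \eqref{6.2.1}$\Rightarrow$\eqref{6.2.4} by repeating that argument, so your single cycle is actually tidier). One small slip to fix: for a $1/k$-\emph{asymptotic}-average-pseudo-orbit you cannot arrange that ``every window of $\beta^{(k)}$ of length at least $m_k$ has average error below $1/k$''---only windows starting at index $0$ are controlled---but since each block of your concatenation $\xi$ is an initial segment of some $\beta^{(k)}$, this weaker statement is all the computation from Theorem~\ref{Theorem 5.1} actually uses, and the argument goes through unchanged.
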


\begin{proof}
Clearly every asymptotic average pseudo-orbit is $\delta$-ergodic pseudo-orbit for any
$\delta>0$ and every $\delta$-average-pseudo-orbit is a $2\delta$-asymptotic-average-pseudo-orbit,
hence $\eqref{6.2.1}\Longrightarrow \eqref{6.2.2}$ and $\eqref{6.2.4}\Longrightarrow
\eqref{6.2.3}$. By Lemma~\ref{Lemma 6.1},
we obtain $\eqref{6.2.3}\Longrightarrow \eqref{6.2.1}$.

First, we show $\eqref{6.2.2}\Longrightarrow \eqref{6.2.1}$. Suppose on the contrary that $f$ does not have the $\mathscr{M}_{\alpha}$-shadowing property.
Then, there exists $\varepsilon>0$ such that for any $k\in \N$, there exists a
$1/2k$-ergodic pseudo-orbit $\{\beta_{i}^{(k)}\}_{i=0}^{\infty}$ which is not
$\mathscr{M}_{\alpha}$-$\varepsilon$-shadowed by any point in $X$. By Lemma~\ref{Lemma 6.1},
for any $k>0$ there exists a $1/k$-average-pseudo-orbit
$\set{\lambda_{i}^{(k)}}_{i=0}^{\infty}$ such that $d(\{i\in \Zp: \beta^{(k)}_{i}\neq \lambda^{(k)}_{i}\})=0$.
Note that for every $z\in X$ we have
\begin{eqnarray*}
\limsup_{n\rightarrow\infty}\frac{1}{n}\left|\Lambda^{c}_{n}(z, \set{\lambda^{(k)}_i}_{i=0}^\infty, f, \varepsilon)\right|
&=&1-\liminf_{n\rightarrow\infty}\frac{1}{n}\left|\Lambda_{n}(z, \set{\lambda^{(k)}_i}_{i=0}^\infty, f, \varepsilon)\right|\\
&\geq& 1-\alpha.
\end{eqnarray*}

Repeating the argument in the proof of Theorem \ref{Theorem 5.1},
we can find an increasing sequence $\left\{l_{n}\right\}_{n=1}^{\infty}$
such that if we denote $\mathscr{L}_{n}=\sum_{j=1}^{n}(l_{j}+1)$ then the following conditions are satisfied:
\begin{enumerate}[(i)]
\item for any $n\geq 2$, there exists $L_{1}^{(n)}, L_{2}^{(n)},
\ldots, L_{k_{n}}^{(n)}\in \left[2^{\mathscr{L}_n},
+\infty\right)$ such that for any $z\in X$, there exists $1\leq j \leq
k_{n}$ satisfying
$$
\frac{1}{L_{j}^{(n)}+1}\left|\Lambda^{c}_{L_{j}^{(n)}+1}(z, \set{\lambda^{(n)}_i}_{i=0}^\infty, f, \varepsilon/2)
\right|\geq 1-\alpha-\frac{1}{2n};
$$

\item $l_{n}\geq \max\left\{L_{1}^{(n)}, L_{2}^{(n)},
\ldots, L_{k_{n}}^{(n)}\right\}$;

\item sequence
$$
\left\{\zeta_{i}\right\}_{i=0}^{\infty}=
\lambda_{0}^{(1)}\lambda_{1}^{(1)}\cdots \lambda_{l_{1}}^{(1)}
\lambda_{0}^{(2)}\lambda_{1}^{(2)}\cdots \lambda_{l_{2}}^{(2)}\cdots
\lambda_{0}^{(n)}\lambda_{1}^{(n)}\cdots \lambda_{l_{n}}^{(n)}\cdots$$
is an asymptotic average pseudo-orbit.
\end{enumerate}

Fix a $z\in X$ and an $n\geq 2$.
There exists $1\leq i_{n}\leq k_{n}$, such that
$$
\frac{1}{L_{i_{n}}^{(n)}+1}\left|\Lambda^{c}_{L_{i_{n}+1}^{(n)}}(f^{\mathscr{L}_{n-1}}(z), \set{\lambda^{(n)}_i}_{i=0}^\infty, f, \varepsilon/2)
\right|\geq 1-\alpha-\frac{1}{2n}.
$$
Using all the above calculations, we obtain that
\begin{eqnarray*}
&&\limsup_{n\rightarrow\infty}\frac{1}{n}\left|\Lambda^{c}_{n}(z, \set{\zeta_i}_{i=0}^\infty, f, \varepsilon/2)\right|\\
&&\quad\quad\quad \geq \limsup_{n\rightarrow\infty}\frac{1}{\mathscr{L}_{n-1}+L_{i_n}^{(n)}+1}
\left|\Lambda^{c}_{\mathscr{L}_{n-1}+L_{i_n}^{(n)}+1}(z, \set{\zeta_i}_{i=0}^\infty, f, \varepsilon/2)\right|\\
&&\quad\quad\quad \geq \limsup_{n\rightarrow\infty}\frac{L^{(n)}_{i_n}+1}{\mathscr{L}_{n-1}+L_{i_n}^{(n)}+1}\frac{1}{L^{(n)}_{i_n}+1}
\left|\Lambda^{c}_{L_{i_{n}}^{(n)}+1}(f^{\mathscr{L}_{n-1}}(z), \set{\lambda^{(n)}_i}_{i=0}^\infty, f, \varepsilon/2)
\right|\\
&&\quad\quad\quad\geq \limsup_{n\rightarrow\infty}\frac{1}{\frac{\mathscr{L}_{n-1}}{L^{(n)}_{i_n}+1}+1}\left(1-\alpha-\frac{1}{2n}\right)
\geq \limsup_{n\rightarrow\infty}\frac{1}{\frac{\mathscr{L}_{n-1}}{2^{\mathscr{L}_{n-1}}}+1}\left(1-\alpha-\frac{1}{2n}\right)\\
&&\quad\quad\quad=
1-\alpha.
\end{eqnarray*}
As a consequence, we obtain that
$$
\liminf_{n\rightarrow\infty}\frac{1}{n}\left|\Lambda_{n}(z, \set{\zeta_i}_{i=0}^\infty, f, \eps/2)\right|
=1-\limsup_{n\rightarrow\infty}\frac{1}{n}\left|\Lambda^{c}_{n}(z, \set{\zeta_i}_{i=0}^\infty, f, \eps/2)\right|
\leq \alpha.
$$
But $z$ was arbitrary, hence there is no point $z\in X$ satisfying \eqref{6.2.2} with respect to the asymptotic average pseudo-orbit $\set{\zeta_i}_{i=1}^\infty$.
This is a contradiction.

Second, to prove $\eqref{6.2.1} \Longrightarrow \eqref{6.2.3}$ suppose that there exists $\varepsilon>0$
such that for any $k\in \mathbb{N}$, there exists a $1/k$-average-pseudo-orbit
of $f$ which is not $\mathscr{M}_{\alpha}$-$\varepsilon$-shadowed by any point in $X$.
 Repeating the argument in the proof of $\eqref{6.2.2}\Longrightarrow \eqref{6.2.1}$, we see that there exists an
asymptotic average pseudo-orbit $\xi=\left\{\xi_{i}\right\}_{i=0}^{\infty}$
of $f$ which is not $\mathscr{M}_{\alpha}$-$\varepsilon/2$-shadowed by any point
in $X$. This contradicts assumption \eqref{6.2.1} as $\xi$ is also a $\delta$-ergodic
pseudo-orbit for any $\delta>0$.

Finally, similarly to the proof of $\eqref{6.2.1} \Longrightarrow \eqref{6.2.3}$, it is not
difficult to prove that $\eqref{6.2.1} \Longrightarrow \eqref{6.2.4}$.
\end{proof}

\begin{lem}\label{Lemma 6.3}
If a dynamical system $(X, f)$ has the average shadowing property,
then it also has the $\M_{\alpha}$-shadowing property for every
$\alpha\in \left[0, 1\right)$.
\end{lem}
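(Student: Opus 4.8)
The plan is to deduce the lemma from the characterization of the $\mathscr{M}_{\alpha}$-shadowing property given in Theorem~\ref{Theorem 6.2}. By the equivalence $\eqref{6.2.1}\Longleftrightarrow\eqref{6.2.3}$ there, it suffices to prove that the average shadowing property implies the following: for every $\eps>0$ there is $\delta>0$ such that every $\delta$-average-pseudo-orbit of $f$ is $\mathscr{M}_{\alpha}$-$\eps$-shadowed by some point of $X$. Routing through Theorem~\ref{Theorem 6.2} (or, equivalently, through Lemma~\ref{Lemma 6.1}) is what bridges the gap between the average-pseudo-orbits appearing in the definition of ASP and the ergodic pseudo-orbits appearing in the definition of $\mathscr{M}_{\alpha}$-shadowing.

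So fix $\alpha\in[0,1)$ and $\eps>0$, and set $\eps'=\eps(1-\alpha)>0$. Using the average shadowing property at the scale $\eps'$, choose $\delta>0$ so that every $\delta$-average-pseudo-orbit $\set{x_i}_{i=0}^{\infty}=:\xi$ is $\eps'$-shadowed on average by some $z\in X$, i.e. $\limsup_{n\to\infty}\frac{1}{n}\sum_{i=0}^{n-1}d(f^{i}(z),x_{i})<\eps'$.

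The only computation needed is a Markov-type estimate: for every $n$, since $d(f^{i}(z),x_{i})\geq\eps$ for each $i\in\Lambda_{n}^{c}(z,\xi,\eps)$, we have
\[
\frac{\eps}{n}\,\bigl|\Lambda_{n}^{c}(z,\xi,\eps)\bigr|\ \leq\ \frac{1}{n}\sum_{i=0}^{n-1}d(f^{i}(z),x_{i}).
\]
Passing to the $\limsup$ gives $\overline{d}\bigl(\Lambda^{c}(z,\xi,\eps)\bigr)\leq\frac{1}{\eps}\limsup_{n\to\infty}\frac{1}{n}\sum_{i=0}^{n-1}d(f^{i}(z),x_{i})<\eps'/\eps=1-\alpha$, hence $\underline{d}\bigl(\Lambda(z,\xi,\eps)\bigr)=1-\overline{d}\bigl(\Lambda^{c}(z,\xi,\eps)\bigr)>\alpha$, so $\Lambda(z,\xi,\eps)\in\mathscr{M}_{\alpha}$ and $\xi$ is $\mathscr{M}_{\alpha}$-$\eps$-shadowed by $z$. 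This verifies condition \eqref{6.2.3}, and Theorem~\ref{Theorem 6.2} then yields the $\mathscr{M}_{\alpha}$-shadowing property.

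I expect no serious obstacle here: the argument is essentially a single application of the Markov inequality, once the passage from ergodic pseudo-orbits to average-pseudo-orbits has been absorbed into Theorem~\ref{Theorem 6.2}. The only point that requires care is the choice $\eps'=\eps(1-\alpha)$, which is precisely what converts the bound on the average tracing error into the required lower bound $\alpha$ for the lower density of the tracing times; note that this is exactly where the hypothesis $\alpha<1$ enters, ensuring $\eps'>0$.
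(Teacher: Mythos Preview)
Your proof is correct and follows essentially the same approach as the paper: both use the Markov-type inequality with the choice $\eps'=(1-\alpha)\eps$ to convert average shadowing into $\mathscr{M}_{\alpha}$-$\eps$-shadowing of a $\delta$-average-pseudo-orbit. The only cosmetic difference is that you invoke Theorem~\ref{Theorem 6.2} to pass from average-pseudo-orbits back to ergodic pseudo-orbits, whereas the paper applies Lemma~\ref{Lemma 6.1} directly for that step; since Theorem~\ref{Theorem 6.2} itself rests on Lemma~\ref{Lemma 6.1}, the two arguments are the same in substance.
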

\begin{proof}
Fix an $\alpha\in \left[0, 1\right)$, an $\eps>0$ and denote
$\gamma=(1-\alpha)\varepsilon$. There exists $\delta>0$ such that
every $\delta$-average-pseudo-orbit is $\gamma$-shadowed on average
by some point in $X$. Fix a $\delta/2$-ergodic pseudo-orbit
$\set{x_{i}}_{i=0}^{\infty}$ and let $\set{y_{i}}_{i=0}^{\infty}$ be
a $\delta$-average-pseudo-orbit provided by Lemma~\ref{Lemma 6.1}. Then,
by the average shadowing property of $f$, there exists $z\in X$ such
that
\begin{eqnarray*}
\gamma=(1-\alpha)\eps& >&\limsup_{n\rightarrow\infty}\frac{1}{n}\sum
_{i=0}^{n-1}d(f^{i}(z), y_{i})\geq \limsup_{n\rightarrow\infty}
\frac{\eps}{n}\left|\Lambda_{n}^{c}(z, \left\{y_{i}\right\}_{i=0}^{\infty}, \eps)\right|\\
& =&\eps\left(1-\liminf_{n\rightarrow\infty}
\frac{1}{n}\left|\Lambda_{n}(z, \left\{y_{i}\right\}_{i=0}^{\infty},\eps)\right|\right).
\end{eqnarray*}
This immediately implies that
$$
\liminf_{n\rightarrow\infty}
\frac{1}{n}\left|\Lambda_{n}(z, \left\{y_{i}\right\}_{i=0}^{\infty}, \eps)\right|>\alpha.
$$
But, by Lemma~\ref{Lemma 6.1}, if we denote $\K=\set{i : x_i \neq y_i}$ then
 $d(\K)=0$ and, as a consequence,
\begin{eqnarray*}
\liminf_{n\rightarrow\infty}
\frac{1}{n}\left|\Lambda_{n}(z, \set{x_i}_{i=0}^\infty, \eps)\right|&\geq&
\liminf_{n\rightarrow\infty}
\frac{1}{n}\left(\left|\Lambda_{n}(z, \left\{y_{i}\right\}_{i=0}^{\infty}, \eps)\right|-|\K \cap [0,n)|\right)\\
&=&\liminf_{n\rightarrow\infty}
\frac{1}{n}\left|\Lambda_{n}(z, \left\{y_{i}\right\}_{i=0}^{\infty}, \eps)\right|-d(\K)\\
&>&\alpha.
\end{eqnarray*}
Therefore, every $\delta/2$-ergodic pseudo-orbit is
$\M_{\alpha}$-$\varepsilon$-shadowed by a point in $X$. But $\eps$
was arbitrary, so indeed $f$ has the $\mathscr{M}_{\alpha}$-shadowing
property.
\end{proof}

\begin{lem}\label{Lemma 6.4}
If a dynamical system $(X, f)$ has the
$\M_{\alpha}$-shadowing property for every $\alpha\in
\left[0, 1\right)$, then $(X,f)$ has the average shadowing property.
\end{lem}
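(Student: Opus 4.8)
The plan is to extract the average shadowing property from one sufficiently strong instance of the $\M_{\alpha}$-shadowing property, and to use Theorem~\ref{Theorem 6.2} to pass from ergodic pseudo-orbits to average pseudo-orbits. If $X$ is a single point there is nothing to prove, so assume $\diam X>0$ and fix $\eps>0$. First I would choose $\alpha\in[0,1)$ so close to $1$ that $(1-\alpha)\diam X<\eps/2$; this is possible precisely because $\diam X>0$, and it is here that we need the hypothesis for $\alpha$ arbitrarily near $1$. By assumption $(X,f)$ has the $\M_{\alpha}$-shadowing property for this $\alpha$, so by the implication $\eqref{6.2.1}\Longrightarrow\eqref{6.2.3}$ of Theorem~\ref{Theorem 6.2} there is $\delta>0$ such that every $\delta$-average-pseudo-orbit of $f$ is $\M_{\alpha}$-$(\eps/2)$-shadowed by some point of $X$.

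The core is then a standard splitting estimate. Let $\xi=\set{x_i}_{i=0}^{\infty}$ be an arbitrary $\delta$-average-pseudo-orbit and let $z\in X$ satisfy $\Lambda(z,\xi,\eps/2)\in\M_{\alpha}$, so that, since $\Lambda_n$ and $\Lambda_n^{c}$ partition $[0,n)$,
$$
\limsup_{n\to\infty}\frac{1}{n}\bigl|\Lambda_n^{c}(z,\xi,\eps/2)\bigr|
=1-\liminf_{n\to\infty}\frac{1}{n}\bigl|\Lambda_n(z,\xi,\eps/2)\bigr|<1-\alpha .
$$
For each $n$, estimating $d(f^{i}(z),x_i)$ by $\eps/2$ for $i\in\Lambda_n(z,\xi,\eps/2)$ and by $\diam X$ for $i\in\Lambda_n^{c}(z,\xi,\eps/2)$ gives
$$
\frac{1}{n}\sum_{i=0}^{n-1}d(f^{i}(z),x_i)\le\frac{\eps}{2}+\frac{\bigl|\Lambda_n^{c}(z,\xi,\eps/2)\bigr|}{n}\,\diam X ,
$$
and taking $\limsup_{n\to\infty}$ together with the previous bound yields
$$
\limsup_{n\to\infty}\frac{1}{n}\sum_{i=0}^{n-1}d(f^{i}(z),x_i)\le\frac{\eps}{2}+(1-\alpha)\diam X<\eps .
$$
Thus $\xi$ is $\eps$-shadowed on average by $z$, and since $\eps>0$ was arbitrary (with $\delta$ obtained from it), $(X,f)$ has the average shadowing property.

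I do not expect a genuine obstacle; once Theorem~\ref{Theorem 6.2} is available the whole matter is the elementary density estimate above. The only points that need a little care are disposing of the degenerate case $\diam X=0$ and tuning $\alpha$ to both $\eps$ and $\diam X$ so that the large-distance contribution $(1-\alpha)\diam X$ is absorbed into $\eps/2$. In principle one might argue directly from the definition of $\M_{\alpha}$-shadowing, but that would force one to relate $\delta$-average-pseudo-orbits to $\delta'$-ergodic pseudo-orbits — which is exactly what Theorem~\ref{Theorem 6.2} packages — and a $\delta$-average-pseudo-orbit is in general not a $\delta$-ergodic pseudo-orbit, so invoking the theorem is the clean route.
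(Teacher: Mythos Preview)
Your proof is correct. The core estimate is identical to the paper's: choose $\alpha$ so that $(1-\alpha)\diam X$ is dominated by a fraction of $\eps$, then split the average distance into the contribution from $\Lambda_n$ and from $\Lambda_n^{c}$.

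The packaging differs slightly. The paper argues by contradiction: assuming ASP fails, it invokes Theorem~\ref{Theorem 5.1} (failure of ASP implies failure of weak AASP) to produce a single asymptotic average pseudo-orbit $\{x_i\}$ that no point can $\eps$-shadow on average; since an asymptotic average pseudo-orbit is automatically $\gamma$-ergodic for every $\gamma$, one may apply the raw $\M_{\alpha}$-shadowing definition directly to $\{x_i\}$ and derive a contradiction via the same splitting estimate. You instead prove ASP directly, using the equivalence $\eqref{6.2.1}\Leftrightarrow\eqref{6.2.3}$ of Theorem~\ref{Theorem 6.2} to upgrade $\M_{\alpha}$-shadowing to a statement about $\delta$-average-pseudo-orbits. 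Your route is cleaner and avoids the contrapositive, though it is not logically more economical: the implication $\eqref{6.2.1}\Rightarrow\eqref{6.2.3}$ in Theorem~\ref{Theorem 6.2} is itself established by repeating the argument of Theorem~\ref{Theorem 5.1}, so both proofs ultimately rest on the same construction.
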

\begin{proof}
Suppose on the contrary that $(X,f)$ does not have the average shadowing
property. Applying Theorem~\ref{Theorem 5.1} implies that
there exist $\eps>0$ and an asymptotic average
pseudo-orbit $\set{x_{i}}_{i=0}^{\infty}$ such
that
\begin{equation}\label{4.1}
\forall z\in X,\  \ \limsup_{n\rightarrow\infty}\frac{1}{n}\sum_{i=0}^{n-1}
d(f^{i}(z), x_{i})\geq \varepsilon.
\end{equation}
Fix $\alpha\in \left[0, 1\right)$ such that
$(1-\alpha)\diam X<\eps/4$. The
$\M_{\alpha}$-shadowing property of $f$ implies that there
exists $\gamma>0$ such that every
$\gamma$-ergodic pseudo-orbit is
$\M_{\alpha}$-$\eps/4$-shadowed by a point in $X$.
Clearly, $\set{x_i}_{i=0}^\infty$ is a $\delta$-ergodic pseudo-orbit for any
$\delta>0$. By Lemma~\ref{Lemma 6.1}, there
exists a $\gamma$-ergodic pseudo-orbit
$\set{y_{i}}_{i=0}^{\infty}$ such that
$d(\K)=0$, where $\K=\set{i: x_{i}\neq y_{i}}$. Then,
there exists $y \in X$ such that
$\liminf_{n\rightarrow\infty}\frac{1}{n}\left|\Lambda_{n}(y,
\set{y_i}_{i=0}^\infty, \eps/4)\right|>\alpha$. This implies that
there exists $N>0$ such that for any $n\geq N$,
$$
\frac{1}{n}\left|\Lambda_{n}^{c}(y, \set{y_{i}}_{i=0}^{\infty}, \eps/4)\right|=1-
\frac{1}{n}\left|\Lambda_{n}(y, \set{y_{i}}_{i=0}^{\infty}, \eps/4)\right|\leq 1-\alpha,
$$
which leads to the following:
\begin{eqnarray*}
\frac{1}{n}\sum_{i=0}^{n-1}d(f^{i}(y), y_{i})&=&\frac{1}{n}
\sum_{i\in \Lambda_{n}(y, \set{y_{i}}_{i=0}^{\infty}, \eps/4)}d(f^{i}(y), y_{i})+
\frac{1}{n}
\sum_{i\in \Lambda^c_{n}(y, \set{y_{i}}_{i=0}^{\infty}, \eps/4)}d(f^{i}(y), y_{i})\\
&\leq& \frac{\eps}{4}+\frac{\left|\Lambda^c_{n}(y, \set{y_{i}}_{i=0}^{\infty}, \eps/4)\right|}{n}\diam X\\
&\leq& \frac{\eps}{4}+(1-\alpha)\diam X<\frac{\varepsilon}{2}.
\end{eqnarray*}
Now, combining \eqref{4.1} with the fact that $d(\K)=0$ we obtain that
\begin{equation}\label{5.2}
\begin{split}
\eps &\ \leq\ \limsup_{n\rightarrow\infty}
\frac{1}{n}\sum_{i=0}^{n-1}d(f^{i}(y), x_{i})\\
&\ =\ \limsup_{n\rightarrow\infty}\frac{1}{n}\left(\sum_{i\in \left[0, n\right)
\cap \Zp\setminus\mathscr{K}}d(f^{i}(y), y_{i})+\sum_{i\in \left[0, n\right)\cap
\mathscr{K}}d(f^{i}(y), x_{i})\right)\\
&\ \leq\  \limsup_{n\rightarrow\infty}\frac{1}{n}\left(\sum_{i=0}^{n-1}d(f^{i}(y), y_{i})
+\sum_{i\in \left[0, n\right)\cap
\mathscr{K}} \mathrm{diam}X\right)\\
&\ <\  \frac{\eps}{2}+d(\K)\diam X =\frac{\varepsilon}{2},
\end{split}
\end{equation}
which is impossible since $\eps>0$. The proof is completed.
\end{proof}

\begin{thm}\label{Theorem 6.5}
Let $(X, f)$ be a dynamical system. Then the following statements are equivalent:
\begin{enumerate}
\item\label{6.5.1} $f$ has the average shadowing property;

\item\label{6.5.2} $f$ has the $\M_{\alpha}$-shadowing property for every $\alpha\in
\left[0, 1\right)$;

\item\label{6.5.3} $f$ has the weak asymptotic average shadowing property;

\item\label{6.5.4} for any $\alpha\in [0, 1)$, any $\eps>0$ and any
asymptotic average pseudo-orbit $\left\{x_{i}\right\}_{i=0}^{\infty}$
of $f$, there exists $z\in X$ such that $\left\{x_{i}\right\}_{i=0}^{\infty}$
is $\mathscr{M}_{\alpha}$-$\eps$-shadowed by $z$;

\item\label{6.5.5} for any $\alpha\in [0, 1)$ and any $\eps>0$, there exists $\delta>0$ such that every
$\delta$-average-pseudo-orbit of $f$ is $\mathscr{M}_{\alpha}$-$\eps$-shadowed
by some point in $X$;

\item\label{6.5.6} for any $\eps>0$, there exists $\delta>0$
such that every $\delta$-asymptotic-average-pseudo-orbit is $\eps$-shadowed
on average by some point in $X$.
\end{enumerate}
\end{thm}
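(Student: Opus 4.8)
The plan is to obtain the full equivalence by assembling the results already established, since almost every one of the six conditions has, in effect, already been shown equivalent to the average shadowing property or to $\M_\alpha$-shadowing. I would organize the argument around one large cluster, handled by earlier lemmas, and a short bridge to the remaining two conditions.

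First the cluster $\eqref{6.5.1}$, $\eqref{6.5.2}$, $\eqref{6.5.4}$, $\eqref{6.5.5}$. Fix $\alpha\in[0,1)$. Theorem~\ref{Theorem 6.2} already gives $\eqref{6.2.1}\Leftrightarrow\eqref{6.2.2}\Leftrightarrow\eqref{6.2.3}$. Now $\eqref{6.5.4}$ (resp.\ $\eqref{6.5.5}$) is literally the assertion that $\eqref{6.2.2}$ (resp.\ $\eqref{6.2.3}$) holds for every $\alpha$, while $\eqref{6.5.2}$ says that $\eqref{6.2.1}$ holds for every $\alpha$; so quantifying the equivalences of Theorem~\ref{Theorem 6.2} over $\alpha\in[0,1)$ yields $\eqref{6.5.2}\Leftrightarrow\eqref{6.5.4}\Leftrightarrow\eqref{6.5.5}$ with no extra work. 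Lemmas~\ref{Lemma 6.3} and~\ref{Lemma 6.4} then give $\eqref{6.5.1}\Leftrightarrow\eqref{6.5.2}$, so these four are all equivalent.

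Now the bridge to $\eqref{6.5.3}$ and $\eqref{6.5.6}$. Theorem~\ref{Theorem 5.1} is exactly $\eqref{6.5.3}\Rightarrow\eqref{6.5.1}$, and the theorem proved just after Theorem~\ref{Theorem 5.2} (weak asymptotic average shadowing forces, for each $\eps$, a $\delta$ with $\delta$-asymptotic-average-pseudo-orbits $\eps$-shadowed on average) is exactly $\eqref{6.5.3}\Rightarrow\eqref{6.5.6}$. For $\eqref{6.5.6}\Rightarrow\eqref{6.5.1}$ I would recall the observation from the proof of Theorem~\ref{Theorem 6.2} that every $\delta$-average-pseudo-orbit is a $2\delta$-asymptotic-average-pseudo-orbit: given $\eps>0$, take $\delta_0$ from $\eqref{6.5.6}$ and set $\delta=\delta_0/2$; any $\delta$-average-pseudo-orbit, being a $\delta_0$-asymptotic-average-pseudo-orbit, is then $\eps$-shadowed on average. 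The only step that is not pure bookkeeping is $\eqref{6.5.1}\Rightarrow\eqref{6.5.3}$. Here I would fix $\eps>0$ and an asymptotic average pseudo-orbit $\set{x_i}_{i=0}^\infty$, choose $\delta$ from the average shadowing property for $\eps$, and note that $\lim_n\frac1n\sum_{i=0}^{n-1}d(f(x_i),x_{i+1})=0$ forces $d(\set{i:\,d(f(x_i),x_{i+1})\ge\eta})=0$ for every $\eta>0$ (Chebyshev), so $\set{x_i}$ is in particular a $\delta/2$-ergodic pseudo-orbit. Lemma~\ref{Lemma 6.1} then produces a $\delta$-average-pseudo-orbit $\set{y_i}$ with $d(\K)=0$ for $\K=\set{i:\,x_i\ne y_i}$; the average shadowing property gives $z\in X$ with $\limsup_n\frac1n\sum_{i=0}^{n-1}d(f^i(z),y_i)<\eps$; and since $\diam X<\infty$ and $d(\K)=0$, the Cesàro averages along $\set{x_i}$ and along $\set{y_i}$ differ by at most $\frac{2\diam X}{n}\abs{\K\cap[0,n)}\to0$, so the same $z$ satisfies $\limsup_n\frac1n\sum_{i=0}^{n-1}d(f^i(z),x_i)<\eps$, which is $\eqref{6.5.3}$.

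Putting the pieces together gives $\eqref{6.5.1}\Leftrightarrow\eqref{6.5.2}\Leftrightarrow\eqref{6.5.4}\Leftrightarrow\eqref{6.5.5}$ and $\eqref{6.5.1}\Rightarrow\eqref{6.5.3}\Rightarrow\eqref{6.5.6}\Rightarrow\eqref{6.5.1}$, closing all six into a single equivalence class. I do not expect a genuine obstacle: the two substantive ingredients — building an average-pseudo-orbit close to an ergodic one (Lemma~\ref{Lemma 6.1}) and the reverse implication (Theorem~\ref{Theorem 5.1}) — are already in hand. The only point requiring care is the $\eqref{6.5.1}\Rightarrow\eqref{6.5.3}$ step, namely the twin observations that an asymptotic average pseudo-orbit is simultaneously a $\delta$-ergodic pseudo-orbit for \emph{every} $\delta>0$, and that altering a sequence on a density-zero set of coordinates leaves every Cesàro $\limsup$ of bounded quantities unchanged.
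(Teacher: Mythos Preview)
Your proposal is correct and follows essentially the same route as the paper: the cluster $\eqref{6.5.1}\Leftrightarrow\eqref{6.5.2}\Leftrightarrow\eqref{6.5.4}\Leftrightarrow\eqref{6.5.5}$ via Theorem~\ref{Theorem 6.2} and Lemmas~\ref{Lemma 6.3}--\ref{Lemma 6.4}, the implications $\eqref{6.5.3}\Rightarrow\eqref{6.5.1}$ and $\eqref{6.5.3}\Rightarrow\eqref{6.5.6}$ from Section~4, the easy $\eqref{6.5.6}\Rightarrow\eqref{6.5.1}$, and the only substantive new step $\eqref{6.5.1}\Rightarrow\eqref{6.5.3}$ through Lemma~\ref{Lemma 6.1} and a density-zero correction of the Ces\`aro average. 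The paper argues identically, with the cosmetic difference that it takes $\delta$ for accuracy $\eps/2$ rather than $\eps$ (your version is fine since a density-zero perturbation does not change the $\limsup$ at all).
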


\begin{proof}
As a consequence of Theorem~\ref{Theorem 5.1}, Theorem~\ref{Theorem 5.3}, Theorem~\ref{Theorem 6.2},
Lemma~\ref{Lemma 6.3} and Lemma~\ref{Lemma 6.4}, we obtain that
$\eqref{6.5.6}\Longleftarrow\eqref{6.5.3}\Longrightarrow\eqref{6.5.1}$ and
$\eqref{6.5.1}\Longleftrightarrow \eqref{6.5.2}\Longleftrightarrow
 \eqref{6.5.4} \Longleftrightarrow \eqref{6.5.5}$.
Since every $\delta$-average-pseudo-orbit is also a
 $2\delta$-asymptotic-average-pseudo-orbit, it follows that $\eqref{6.5.6}\Longrightarrow
 \eqref{6.5.1}$. Therefore, we only
need to show that $\eqref{6.5.1}\Longrightarrow \eqref{6.5.3}$.

Fix an $\eps>0$ and an asymptotic average pseudo-orbit $\set{x_i}_{i=0}^\infty$.
The average shadowing property of $f$ implies that there
exists $\delta>0$ such that every $\delta$-average-pseudo-orbit is $\eps/2$-
shadowed on average by some point in $X$.

Clearly, $\set{x_i}_{i=0}^\infty$ is a $\gamma$-ergodic pseudo-orbit
for any $\gamma>0$, in particular for $\gamma=\delta/2$. Hence, by
Lemma~\ref{Lemma 6.1}, there exists a $\delta$-average pseudo-orbit
$\set{y_i}_{i=0}^\infty$ such that the set $\K=\set{i : x_i\neq
y_i}$ has density zero (i.e. $d(\K)=0$). By the choice of $\delta$,
there is a point $z\in X$ such that
$$
\limsup_{n\rightarrow\infty}\frac{1}{n}\sum_{i=0}^{n-1}d(f^{i}(z), y_{i})<\frac{\eps}{2}.
$$
But, similarly to the proof of \eqref{5.2}, we obtain that
$$
\limsup_{n\rightarrow\infty}\frac{1}{n}\sum_{i=0}^{n-1}d(f^{i}(z), x_{i})<\frac{\eps}{2}.
$$

We have just proved that $(X,f)$ has the weak average asymptotic
shadowing property, and so the proof is finished.
\end{proof}

\begin{cor}
If a dynamical system $(X,f)$ has the ergodic shadowing property, then it also has the average shadowing property.
\end{cor}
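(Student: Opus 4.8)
The plan is to deduce this from Theorem~\ref{Theorem 6.5}, specifically from the equivalence of its conditions \eqref{6.5.1} and \eqref{6.5.2}. Recall that by Definition~\ref{def:F-shadowing} the ergodic shadowing property is precisely the $\hat{\M}_1$-shadowing property, i.e.\ for every $\eps>0$ there is $\delta>0$ such that every $\delta$-ergodic pseudo-orbit $\xi$ is $\hat{\M}_1$-$\eps$-shadowed by some $z\in X$; this means $\underline{d}(\Lambda(z,\xi,\eps))\geq 1$, hence $d(\Lambda(z,\xi,\eps))=1$.

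First I would record the elementary inclusion $\hat{\M}_1\subseteq\M_\alpha$, valid for every $\alpha\in[0,1)$: if $A\subseteq\Zp$ satisfies $\underline{d}(A)\geq 1$, then a fortiori $\underline{d}(A)>\alpha$, so $A\in\M_\alpha$. Therefore, for any fixed $\alpha\in[0,1)$ and any $\eps>0$, the same $\delta$ provided by the ergodic shadowing property already witnesses the $\M_\alpha$-shadowing property, because a point $z$ that $\hat{\M}_1$-$\eps$-shadows a $\delta$-ergodic pseudo-orbit $\xi$ also $\M_\alpha$-$\eps$-shadows it. Consequently $(X,f)$ has the $\M_\alpha$-shadowing property for every $\alpha\in[0,1)$.

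Finally, applying the implication \eqref{6.5.2}$\Longrightarrow$\eqref{6.5.1} of Theorem~\ref{Theorem 6.5} yields that $(X,f)$ has the average shadowing property. There is essentially no obstacle here: the only point requiring attention is the orientation of the density inequality in $\hat{\M}_1\subseteq\M_\alpha$ (one must use the lower density $\underline{d}$, not $\overline{d}$), after which the conclusion is an immediate invocation of the already-proved characterization. Alternatively one could argue directly in the spirit of the proof of Lemma~\ref{Lemma 6.4}, producing from a putative bad average-pseudo-orbit an asymptotic average pseudo-orbit that is $\hat{\M}_1$-badly-traced, but passing through Theorem~\ref{Theorem 6.5} is shorter and cleaner.
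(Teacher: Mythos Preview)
Your proposal is correct and follows essentially the same approach as the paper's proof: the paper also observes that $\hat{\M}_1\subset\M_\alpha$ for every $\alpha\in[0,1)$ and then invokes Theorem~\ref{Theorem 6.5}. Your write-up is simply more detailed (and your closing remark about the alternative route via Lemma~\ref{Lemma 6.4} is apt but not needed).
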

\begin{proof}
Simply, the family $\hat{\M}_1$ is contained in the
family $\M_\alpha$ for any $\alpha \in [0,1)$. Then, the result
follows directly by Theorem~\ref{Theorem 6.5}.
\end{proof}

Clearly, $\M_{0}$ and $\M^{1/2}$ are subsets of $\M_{1/2}$. Hence, Theorem~\ref{Theorem 6.5} immediately implies the following.
\begin{cor}\label{Cor_d}
If a dynamical system $(X,f)$ has the average shadowing property, then
it also has the $\underline{d}$-shadowing and $\overline{d}$-shadowing properties.
\end{cor}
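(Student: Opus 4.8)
The plan is to deduce this directly from Theorem~\ref{Theorem 6.5}, using nothing more than the elementary inequality $\underline{d}(A)\le\overline{d}(A)$ valid for every $A\subset\Zp$, together with the obvious monotonicity of $\F$-shadowing: if $\F\subseteq\F'$ and $f$ has the $\F$-shadowing property, then $f$ has the $\F'$-shadowing property (the very same $\delta$ works, since $\Lambda(z,\xi,\eps)\in\F\subseteq\F'$).

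First I would recall from Definition~\ref{def:F-shadowing} that the $\underline{d}$-shadowing property is exactly the $\M_0$-shadowing property and the $\overline{d}$-shadowing property is exactly the $\M^{1/2}$-shadowing property. By Theorem~\ref{Theorem 6.5}, the average shadowing property implies that $f$ has the $\M_\alpha$-shadowing property for every $\alpha\in[0,1)$. Taking $\alpha=0$ immediately gives the $\M_0$-shadowing property, i.e.\ the $\underline{d}$-shadowing property, which settles the first half.

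For the second half I would take $\alpha=1/2$ in Theorem~\ref{Theorem 6.5} to obtain that $f$ has the $\M_{1/2}$-shadowing property, and then invoke the inclusion $\M_{1/2}\subseteq\M^{1/2}$: if $\underline{d}(A)>1/2$, then $\overline{d}(A)\ge\underline{d}(A)>1/2$, so $A\in\M^{1/2}$. By the monotonicity remark above, the $\M_{1/2}$-shadowing property therefore forces the $\M^{1/2}$-shadowing property, i.e.\ the $\overline{d}$-shadowing property. There is essentially no obstacle here: the entire content sits in Theorem~\ref{Theorem 6.5}, and what remains is only the trivial observation that $\M_{1/2}$ is contained in both $\M_0$ and $\M^{1/2}$, which follows from $\underline{d}\le\overline{d}$.
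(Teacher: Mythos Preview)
Your proof is correct and follows essentially the same approach as the paper: both derive the corollary from Theorem~\ref{Theorem 6.5} together with the elementary inclusions $\M_{1/2}\subseteq\M_0$ and $\M_{1/2}\subseteq\M^{1/2}$ and the monotonicity of $\F$-shadowing. (In fact the sentence preceding the corollary in the paper has the inclusions written backwards---a typo---whereas you state them in the correct direction.)
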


\section{Dynamics on measure center and shadowing}

\subsection{The $\M_\alpha$-shadowing property on the measure center}
Motivated by \cite{KO11, KKO14}, this section is devoted to proving that the
$\mathscr{M}_{\alpha}$-shadowing property of a dynamical system restricted on
its measure center can ensure the same dynamical property of the entire system.
\begin{lem}\label{7.1}
Let $A\subset X$ be a closed invariant set containing the measure
center of a compact dynamical system $(X, f)$. Then, for every
asymptotic average pseudo-orbit $\left\{x_{i}\right\}_{i=0}^{\infty}$ of $f$,
there exists an asymptotic average pseudo-orbit $\left\{y_{i}\right\}_{i=0}^{\infty}
\subset A$ of $f|_{A}$ and a set $J\subset \Zp$ with density zero such that
$\lim_{i\notin J}d(x_{i}, y_{i})=0$.
\end{lem}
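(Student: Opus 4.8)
The plan is to approximate the asymptotic average pseudo-orbit $\{x_i\}$ by pulling each coordinate into $A$ while controlling both the density of the set of times where a change is made and the resulting error along the averaged-tracing sums. First I would fix a countable dense sequence of scales $\eps_k\downarrow 0$ and, for each point $x\in X$ that is within distance $\eps_k$ of $A$, choose (measurably, or simply by a fixed selection) a nearby point of $A$; the key geometric input is that the measure center is contained in $A$, so the ``bad'' set $B_k=\{x\in X:d(x,A)\geq \eps_k\}$ is a closed set disjoint from the measure center, hence $B_k$ is not measure saturated — more precisely, there is a finite open cover of $B_k$ by sets $U$ with $\mu(U)=0$ for every invariant measure $\mu$. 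By the Birkhoff ergodic theorem (or rather the uniform version obtained from compactness of the set of invariant measures, as in \cite{KO11, KKO14}), for \emph{every} $x\in X$ the set of times $i$ with $f^i(x)\in B_k$ has upper density $0$; this is exactly where the measure-center hypothesis is used.

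Next I would handle the discrepancy between $\{x_i\}$ and $\{f^i(z)\}$ for a witnessing point $z$. Since $\{x_i\}$ is an asymptotic average pseudo-orbit, $\frac1n\sum_{i=0}^{n-1}d(f(x_i),x_{i+1})\to 0$. I do \emph{not} have a shadowing hypothesis here (the lemma is unconditional), so I cannot replace $\{x_i\}$ by a genuine orbit. Instead, the construction is direct: for each $k$ pick a threshold $N_k$ past which I start using scale $\eps_k$, set $y_i$ to be the chosen point of $A$ near $x_i$ when $x_i\notin B_k$ (with $k$ the current stage) and when $d(x_i,A)<\eps_k$, and otherwise set $y_i$ to be $f(y_{i-1})$ (i.e., continue the $f|_A$-orbit), recording these ``forced'' indices into the exceptional set $J$. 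Because $A$ is invariant, $\{y_i\}\subset A$ automatically; one checks $\{y_i\}$ is an asymptotic average pseudo-orbit of $f|_A$ by splitting $\sum_{i=0}^{n-1}d(f(y_i),y_{i+1})$ into indices where both $i$ and $i+1$ are ``good'' (there $f(y_i)$ and $y_{i+1}$ are both close to $f(x_i)$ resp. $x_{i+1}$, which are close to each other on average), indices where $i+1$ is ``forced'' (there $d(f(y_i),y_{i+1})=0$), and the remaining indices, whose density tends to $0$ by the $B_k$ estimate together with uniform continuity of $f$.

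The set $J$ of exceptional times is the union over $k$ of: the block $[0,N_k)$ intersected appropriately, the times $i$ with $f^i$-image of the relevant point in $B_k$, and the ``forced'' indices; by a standard diagonal argument (choosing $N_k$ growing fast enough that the density of $J$ up to $N_{k+1}$ is at most, say, $1/k$ contributed by stages $\leq k$, exactly as the $m_n$ are chosen in the proof of Theorem~\ref{Theorem 5.1}) one gets $d(J)=0$, and for $i\notin J$ the distance $d(x_i,y_i)$ is at most the current $\eps_k\to 0$, giving $\lim_{i\notin J}d(x_i,y_i)=0$. The main obstacle I expect is the bookkeeping that keeps $\{y_i\}$ a genuine asymptotic average pseudo-orbit of $f|_A$ while simultaneously forcing it into $A$: the ``forced'' indices must be rare (density zero) \emph{and} one must ensure that after a forced index the orbit of $f|_A$ drifts back to within $\eps_k$ of $x_i$ for a positive-density set of later times — but in fact this last point is not needed, since all we must prove about $\{y_i\}$ is that it is an asymptotic average pseudo-orbit (automatic once forced indices have density zero and $f$ is uniformly continuous) and that it agrees with $\{x_i\}$ off a density-zero set. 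So the real content is entirely the measure-center estimate $\overline d(\{i:f^i(x)\in B_k\})=0$ together with the diagonalization over $k$.
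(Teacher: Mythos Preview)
Your plan follows essentially the same route as the paper, which itself simply defers to \cite{KO11,KKO14}: isolate the measure-center estimate, project $x_i$ to a nearby point of $A$ whenever $d(x_i,A)<\eps_k$, continue along the $f|_A$-orbit otherwise, and diagonalize over scales $\eps_k\downarrow0$. The construction of $\{y_i\}$ and the case-split verifying that it is an asymptotic average pseudo-orbit of $f|_A$ are correct as you describe.

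There is, however, one real gap. The measure-center estimate you state is for \emph{orbits}: for every $x\in X$, the set $\{i:f^i(x)\in B_k\}$ has upper density $0$. But your ``forced'' indices are the set $\{i:x_i\in B_k\}$, where $\{x_i\}$ is only an asymptotic average pseudo-orbit; your phrase ``the $f^i$-image of the relevant point'' has no clear referent, since there is no single point whose orbit the pseudo-orbit follows. This transfer from orbits to pseudo-orbits is precisely the technical content that the paper delegates to \cite[Lemmas~3.4--3.9]{KO11} and the first part of the proof of \cite[Theorem~3.3]{KO11}, and without it your diagonal choice of $N_k$ cannot be made. One clean way to fill the gap is measure-theoretic: every weak-$*$ limit of the empirical measures $\mu_n=\tfrac{1}{n}\sum_{i<n}\delta_{x_i}$ is $f$-invariant (the asymptotic-average-pseudo-orbit condition forces $\int\phi\,d\mu_n-\int\phi\circ f\,d\mu_n\to0$ for every continuous $\phi$), hence supported on the measure center $\subset A$; since $B_k$ is closed and disjoint from $A$, portmanteau then gives $\mu_n(B_k)\to 0$, i.e., $d(\{i:x_i\in B_k\})=0$. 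Alternatively, use the \emph{uniform} estimate of \cite[Corollary~2.4]{KKO14} on blocks of a fixed length $N$, choosing $\delta$ by uniform continuity so that a $\delta$-pseudo-orbit of length $N$ stays $\eps_k/2$-close to the genuine orbit of its initial point; the density-zero jump set of $\{x_i\}$ then makes almost all blocks ``good''. With $d(\{i:x_i\in B_k\})=0$ in hand, the rest of your argument goes through as written.
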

\begin{proof}
The proof is very technical but has a standard idea, hence we decided only to present how to derive it from existing results.
A careful reader should be able to write down a complete proof.

Corollary~2.4 in \cite{KKO14} shows that if a set $A$ contains the measure center of a compact
then for any $\eps > 0$ there exists $N>0$ such that
for any $x \in X$ and $n< N$ we have
$$
\frac{1}{n}|\set{ 0\leq i < n : d(f^i(x),A) < \eps}|>1-\eps.
$$
Then, $A$ satisfies the standing assumption of Lemmas 3.4--Lemma 3.9 in \cite{KO11} and hence we can repeat the first part of the proof in
\cite[Theorem 3.3]{KO11} on pp.42--43, obtaining an asymptotic average pseudo-orbit $\{y_{i}\}_{i=0}^{\infty}
\subset A$ of $f$ and a set $J'\subset \Zp$ with density zero such that
$\lim_{i\notin J'}d(x_{i}, y_{i})=0$.
\end{proof}

\begin{thm}\label{Theorem 7.2}
Let $A\subset X$ be a closed invariant set containing the measure
center of a compact dynamical system $(X, f)$. If $f|_{A}$ has the
$\mathscr{M}_{\alpha}$-shadowing property for some
$\alpha\in \left[0, 1\right)$ on $A$, then so does $f$ on $X$.
\end{thm}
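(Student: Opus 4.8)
The plan is to combine Lemma~\ref{7.1} with the characterization of the $\M_\alpha$-shadowing property given in Theorem~\ref{Theorem 6.2}, specifically the equivalence between $\eqref{6.2.1}$ and $\eqref{6.2.2}$: a system has the $\M_\alpha$-shadowing property if and only if every asymptotic average pseudo-orbit is $\M_\alpha$-$\eps$-shadowed. So it suffices to show that every asymptotic average pseudo-orbit $\set{x_i}_{i=0}^\infty$ of $f$ in $X$ is $\M_\alpha$-$\eps$-shadowed by some point of $X$, for each $\eps>0$. First I would fix such an $\eps$ and an asymptotic average pseudo-orbit $\set{x_i}_{i=0}^\infty$ and pass to the continuity scale: by uniform continuity there is nothing subtle, we just work with $\eps$ directly but keep a little room, say we will actually produce an $\eps$-shadowing point.

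Next I would invoke Lemma~\ref{7.1} to obtain an asymptotic average pseudo-orbit $\set{y_i}_{i=0}^\infty \subset A$ of $f|_A$ together with a density-zero set $J\subset\Zp$ such that $\lim_{i\notin J} d(x_i,y_i)=0$. Since $f|_A$ has the $\M_\alpha$-shadowing property, Theorem~\ref{Theorem 6.2} (applied to the system $(A,f|_A)$) gives a point $z\in A\subset X$ that $\M_\alpha$-$(\eps/2)$-shadows $\set{y_i}_{i=0}^\infty$, i.e.
$$
\underline{d}\bigl(\Lambda(z,\set{y_i}_{i=0}^\infty, f|_A, \eps/2)\bigr) > \alpha.
$$
Because $z\in A$ and $A$ is invariant, $f^i(z)=(f|_A)^i(z)$, so this is the same as $\underline{d}(\Lambda(z,\set{y_i}_{i=0}^\infty, f, \eps/2))>\alpha$ in $X$.

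It then remains to transfer this from $\set{y_i}$ back to $\set{x_i}$. Since $\lim_{i\notin J} d(x_i,y_i)=0$, there is $i_0$ such that $d(x_i,y_i)<\eps/2$ for all $i\notin J$ with $i\geq i_0$; hence for all but finitely many $i\notin J$, if $d(f^i(z),y_i)<\eps/2$ then $d(f^i(z),x_i)<\eps$ by the triangle inequality. Consequently
$$
\Lambda(z,\set{x_i}_{i=0}^\infty, f, \eps) \supset \Lambda(z,\set{y_i}_{i=0}^\infty, f, \eps/2) \setminus \bigl(J \cup [0,i_0)\bigr),
$$
and since $d(J)=0$ and a finite set has density zero, taking lower densities gives
$$
\underline{d}\bigl(\Lambda(z,\set{x_i}_{i=0}^\infty, f, \eps)\bigr) \geq \underline{d}\bigl(\Lambda(z,\set{y_i}_{i=0}^\infty, f, \eps/2)\bigr) > \alpha.
$$
Thus $\set{x_i}_{i=0}^\infty$ is $\M_\alpha$-$\eps$-shadowed by $z\in X$, and since $\eps$ was arbitrary, condition $\eqref{6.2.2}$ holds for $(X,f)$; by Theorem~\ref{Theorem 6.2} this is equivalent to $(X,f)$ having the $\M_\alpha$-shadowing property. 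I do not expect a genuine obstacle here: the only point needing a little care is the elementary fact that removing a density-zero set and a finite set from a set of lower density $>\alpha$ leaves lower density still $>\alpha$ (this uses $|(B\setminus C)\cap[0,n)|\geq |B\cap[0,n)|-|C\cap[0,n)|$ together with $\underline{d}(B\setminus C)\geq \underline{d}(B)-\overline{d}(C)$ when $\overline d(C)=0$). All the hard work—producing the shadowing orbit inside $A$ and the pseudo-orbit approximation—has already been packaged into Theorem~\ref{Theorem 6.2} and Lemma~\ref{7.1} respectively.
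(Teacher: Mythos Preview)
Your proposal is correct and follows essentially the same route as the paper's proof: invoke Lemma~\ref{7.1} to replace the given asymptotic average pseudo-orbit by one inside $A$, shadow it there with accuracy $\eps/2$, and use the triangle inequality together with the density-zero exceptional set to transfer the $\M_\alpha$-$\eps$-shadowing back to $\{x_i\}$, concluding via Theorem~\ref{Theorem 6.2}. The only cosmetic difference is that the paper absorbs your $[0,i_0)$ into a single density-zero set $J'\supset J$, and it applies the $\M_\alpha$-shadowing property of $f|_A$ directly (an asymptotic average pseudo-orbit is already a $\delta$-ergodic pseudo-orbit for every $\delta$) rather than routing through Theorem~\ref{Theorem 6.2} on the $A$ side.
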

\begin{proof}
For any asymptotic average pseudo-orbit $\left\{x_{i}\right\}_{i=0}^{\infty}$ of $f$
and any $\varepsilon>0$, it follows from Lemma~\ref{7.1} that
there exists an asymptotic average pseudo-orbit $\left\{y_{i}\right\}_{i=0}^{\infty}
\subset A$ of $f|_{A}$ and a set $J\subset \Zp$ with density zero such that
$\lim_{i\notin J}d(x_{i}, y_{i})=0$. This implies that there exists a set $J'\supset J$
with density zero such that for any $i\in \Zp\setminus J'$ we have $d(x_{i}, y_{i})
<\varepsilon/2$. The $\M_{\alpha}$-shadowing property of $f|_{A}$ implies that there exists
a point $z\in A$ such that
$$
\liminf_{n\rightarrow\infty}\frac{1}{n}\left|\Lambda_{n}(z, \left\{y_{i}\right\}_{i=0}^{\infty},
f|_{A}, \varepsilon/2)\right|>\alpha.
$$
But, for every
$$
i\in \Lambda(z, \left\{y_{i}\right\}_{i=0}^{\infty},
f, \eps/2) \setminus J',
$$
we have that
$$
d(f^{i}(z), x_{i})\leq d(f^{i}(x), y_{i})
+d(y_{i}, x_{i})<\eps,
$$
and hence
\begin{eqnarray*}
\liminf_{n\rightarrow\infty}\frac{1}{n}\left|\Lambda_{n}(z, \left\{x_{i}\right\}_{i=0}^{\infty},
f, \varepsilon)\right| &\geq& \liminf_{n\rightarrow\infty}\frac{1}{n}\left|\Lambda_{n}(z, \left\{y_{i}\right\}_{i=0}^{\infty},
f|_{A}, \varepsilon/2)\setminus J'\right|\\
&>&\alpha.
\end{eqnarray*}

It shows that every asymptotic average pseudo-orbit is
$\mathscr{M}_{\alpha}$-$\eps$-shadowed by a point in $X$, and then the result follows by
Theorem~\ref{Theorem 6.2}.
\end{proof}

As an immediate corollary, we obtain the following result which is \cite[Theorem 5.5]{KKO14}.
\begin{cor}\label{Corollary 7.3}
Let $A\subset X$ be a closed invariant set containing the measure
center of a compact dynamical system $(X, f)$. If $f|_{A}$ has the average shadowing
property on $A$, then so does $f$ on $X$.
\end{cor}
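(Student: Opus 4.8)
The plan is to deduce the statement with no new argument, by stringing together Theorem~\ref{Theorem 6.5} and Theorem~\ref{Theorem 7.2}. The observation that makes this work is that Theorem~\ref{Theorem 6.5} is stated for an arbitrary dynamical system, and $(A, f|_{A})$ is one: $A$ is a closed subset of the compact space $X$, hence compact, and $f|_{A}\colon A\longrightarrow A$ is well defined and continuous because $A$ is invariant. So first I would apply the equivalence $\eqref{6.5.1}\Longleftrightarrow\eqref{6.5.2}$ of Theorem~\ref{Theorem 6.5} to the system $(A,f|_{A})$, turning the hypothesis ``$f|_{A}$ has the average shadowing property on $A$'' into: $f|_{A}$ has the $\M_{\alpha}$-shadowing property on $A$ for every $\alpha\in[0,1)$.

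Next I would fix an arbitrary $\alpha\in[0,1)$ and invoke Theorem~\ref{Theorem 7.2}. Its hypotheses are met: $A$ is a closed invariant set containing the measure center of $(X,f)$, and $f|_{A}$ has the $\M_{\alpha}$-shadowing property on $A$ by the previous step. Hence $f$ has the $\M_{\alpha}$-shadowing property on $X$. Since $\alpha\in[0,1)$ was arbitrary, $f$ has the $\M_{\alpha}$-shadowing property on $X$ for every $\alpha\in[0,1)$.

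Finally I would apply the reverse implication $\eqref{6.5.2}\Longrightarrow\eqref{6.5.1}$ of Theorem~\ref{Theorem 6.5}, this time to the full system $(X,f)$, and conclude that $f$ has the average shadowing property on $X$, which is exactly what is claimed.

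I do not expect any real obstacle: all the substance has already been absorbed into Theorems~\ref{Theorem 6.5} and~\ref{Theorem 7.2}. The only points deserving a word of care are that Theorem~\ref{Theorem 6.5} must be used twice, once for the subsystem $(A,f|_{A})$ and once for $(X,f)$, and that Theorem~\ref{Theorem 7.2} must be applied for every $\alpha\in[0,1)$ rather than for one fixed value — but this is unproblematic, since the ``for some $\alpha$'' in the hypothesis of Theorem~\ref{Theorem 7.2} is weaker than what we have available, namely the $\M_{\alpha}$-shadowing property of $f|_{A}$ on $A$ for all $\alpha$.
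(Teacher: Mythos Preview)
Your proposal is correct and is essentially the same as the paper's own proof: both invoke Theorem~\ref{Theorem 6.5} to translate ASP into $\M_\alpha$-shadowing for all $\alpha\in[0,1)$, apply Theorem~\ref{Theorem 7.2} for each $\alpha$ to transfer the property from $A$ to $X$, and then invoke Theorem~\ref{Theorem 6.5} again to return to ASP. Your write-up is simply more explicit about the two applications of Theorem~\ref{Theorem 6.5} (once on the subsystem, once on the full system) and about quantifying over $\alpha$.
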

\begin{proof}
By Theorem~\ref{Theorem 7.2}, for any $\alpha \in [0,1)$,
if $f|_{A}$ has the $\M_\alpha$-shadowing property then $f$ has
the $\M_\alpha$-shadowing property. Then, the result follows by Theorem~\ref{Theorem 6.5}.
\end{proof}
We also obtain the following result, since the $\underline{d}$-shadowing
property is in fact the $\M_0$-shadowing property.

\begin{cor}\label{Corollary 7.4}
Let $A\subset X$ be a closed invariant set containing the measure
center of a compact dynamical system $(X, f)$. If $f|_{A}$ has
the $\underline{d}$-shadowing property on $A$,
then so does $f$ on $X$.
\end{cor}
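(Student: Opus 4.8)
The plan is to observe that the $\underline{d}$-shadowing property is, by definition, nothing else than the $\M_0$-shadowing property: recall from Definition~\ref{def:F-shadowing} that $\underline{d}$-shadowing corresponds precisely to the choice $\F=\M_0$ in the $\F$-shadowing scheme. With this identification in hand, the statement becomes an immediate instance of Theorem~\ref{Theorem 7.2} applied with $\alpha=0$.

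More precisely, first I would note that the hypothesis ``$f|_A$ has the $\underline{d}$-shadowing property on $A$'' means exactly that $f|_A$ has the $\M_0$-shadowing property on $A$. Since $A$ is a closed invariant set containing the measure center of the compact system $(X,f)$, Theorem~\ref{Theorem 7.2} with $\alpha=0$ then gives that $f$ has the $\M_0$-shadowing property on $X$. Unwinding Definition~\ref{def:F-shadowing} once more, this is precisely the assertion that $f$ has the $\underline{d}$-shadowing property on $X$, which finishes the proof.

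There is no genuine obstacle in this argument: all of the substance has already been absorbed into Theorem~\ref{Theorem 7.2}, and the only point to check is the purely terminological equivalence $\underline{d}$-shadowing $=\M_0$-shadowing, which is immediate from the definitions. If a self-contained proof were desired, one could instead copy the proof of Theorem~\ref{Theorem 7.2} verbatim with $\alpha=0$: use Lemma~\ref{7.1} to replace an asymptotic average pseudo-orbit of $f$ by one of $f|_A$ agreeing with it off a density-zero set $J$, apply the $\M_0$-shadowing property of $f|_A$ to trace the latter along a set of positive lower density by some $z\in A$, and then note that deleting $J$ (and an additional density-zero error set) does not lower the lower density below $0$; invoking Theorem~\ref{Theorem 7.2} directly is, however, the cleanest route.
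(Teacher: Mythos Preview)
Your proposal is correct and matches the paper's own approach exactly: the paper simply remarks (immediately before stating the corollary) that the $\underline{d}$-shadowing property is the $\M_0$-shadowing property, so the result is a direct instance of Theorem~\ref{Theorem 7.2} with $\alpha=0$.
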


\subsection{The (almost) specification properties and the measure center}
To proceed , we need the following important result, which is attributed to
Auslander and Ellis (see \cite[Theorem~8.7]{FurBook}).
\begin{lem}\label{lem:prox_min}
Let $(X,f)$ be a dynamical system and fix an $x\in X$. There exists $y\in M(f)$
such that $x,y$ are proximal, i.e. $\liminf_{n\to \infty} d(f^n(x),f^n(y))=0$.
\end{lem}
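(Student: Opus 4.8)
\textbf{Proof plan for Lemma~\ref{lem:prox_min} (Auslander--Ellis).}
The statement is the classical Auslander--Ellis theorem, so the plan is to reproduce the standard Zorn's-lemma argument based on proximal cells and minimal sets. First I would fix $x\in X$ and consider the \emph{orbit closure} $Y=\overline{\{f^n(x):n\in\Zp\}}$, which is a nonempty closed invariant subset of the compact space $X$. Working inside $Y$, I would introduce the collection $\mathcal{A}$ of all nonempty closed invariant sets $A\subset Y$ with the property that $x$ is proximal to every point of $A$ in a uniform sense along a common net of times --- more precisely, following Furstenberg, one looks at the family of closed invariant sets $A$ such that $x$ and some point of $A$ lie in the proximal relation witnessed by iterates tending to the diagonal. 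The cleanest route is to use the enveloping semigroup or, equivalently, a minimal idempotent: take a minimal left ideal $I$ in the enveloping semigroup $E(Y)$ and an idempotent $u\in I$; then $y:=ux$ is the desired point.

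The key steps, in order, are: (1) recall that $E(Y)$, the closure of $\{f^n:n\in\Zp\}$ in $Y^Y$ with the product topology and composition, is a compact right-topological semigroup; (2) invoke Ellis's theorem that such a semigroup contains a minimal left ideal $I$, and that $I$ contains an idempotent $u=u^2$ (this uses Zorn's lemma applied to compact subsemigroups); (3) set $y=ux$ and show $y\in M(f)$: since $Eu\subset I$ is a minimal left ideal and $y=ux$, its orbit closure $\overline{Ey}=Ey$ equals $Iy$ which is minimal, so $y$ is a minimal point; (4) show $x$ and $y$ are proximal: because $u$ is an idempotent, $uy=u(ux)=u^2x=ux=y$, and since $u$ lies in the closure of $\{f^n\}$ there is a net $n_\lambda$ with $f^{n_\lambda}\to u$ pointwise, hence $f^{n_\lambda}(x)\to ux=y$ and $f^{n_\lambda}(y)\to uy=y$, so $d(f^{n_\lambda}(x),f^{n_\lambda}(y))\to 0$ along this net, which gives $\liminf_{n\to\infty} d(f^n(x),f^n(y))=0$.

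Alternatively, to avoid the enveloping semigroup machinery entirely, I would give the direct Zorn's-lemma proof: consider the poset of nonempty closed subsets $B\subset\overline{\{f^n(x)\}}$ such that $(x,b)$ is a proximal pair for every $b\in B$, ordered by reverse inclusion, and containing a $\delta$-proximal structure; show every chain has an upper bound by compactness (finite intersection property), extract a minimal element, and argue that a minimal element must be a singleton $\{y\}$ with $y$ a minimal point --- if $y$ were not minimal, its orbit closure would contain a proper minimal subset $M$, and one uses the proximality of $(x,y)$ together with a point of $M$ to produce a strictly smaller element of the poset, contradicting minimality.

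The main obstacle is step (3)/(4), i.e. simultaneously arranging that the limit point $y$ is \emph{both} minimal \emph{and} proximal to $x$; neither property alone is hard (proximality to some orbit-closure point is automatic, and minimal sets exist by Zorn), but coupling them requires the idempotent, whose existence is the genuinely nontrivial input (Ellis's lemma). Since this is a well-known theorem cited as \cite[Theorem~8.7]{FurBook}, in the paper I would simply state it and refer to Furstenberg's book rather than reproducing either argument in full.
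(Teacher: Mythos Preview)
Your proposal is correct, and in fact you anticipate exactly what the paper does: it does not prove this lemma at all but simply states it as the Auslander--Ellis theorem and refers to \cite[Theorem~8.7]{FurBook}. Your enveloping-semigroup sketch via a minimal idempotent is the standard argument and is sound; there is nothing to add.
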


A point $x\in X$ is \emph{non-wandering} if there is a sequence $\set{x_n}_{n=1}^\infty$ and an increasing sequence of positive integers $\set{k_n}_{n=1}^\infty$ such that $\lim_{n\to \infty}x_n=x$ and $\lim_{n\to \infty}f^{k_n}(x_n)=x$.
As usual, we denote by $\Omega(f)$ the set of all non-wandering points.

\begin{thm}\label{thm:spec}
If $(X,f)$ has the specification property then the following conditions hold:
\begin{enumerate}
\item $M(f)$ is dense in $\Omega(f)$, in particular
$f(\Omega(f))=\Omega(f)$ and $\Omega(f)$ is the measure center for $f$;

\item $f|_{\Omega(f)}$ has the specification property.
\end{enumerate}
\end{thm}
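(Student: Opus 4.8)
The plan is to reduce statement (1) to the single claim that $M(f)$ is dense in $\Omega(f)$, prove that by a specification‑driven construction of minimal points, and then read off (2) from the same construction.

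\emph{Reduction.} I would first record three standard facts. (a) Every minimal point is (uniformly) recurrent, hence non‑wandering, so $M(f)\subseteq\Omega(f)$; also $\Omega(f)$ is closed. (b) For any $f$‑invariant Borel probability $\mu$, Poincar\'e recurrence makes $\mu$‑almost every point recurrent, hence recurrent points are dense in $\supp\mu$ and therefore $\supp\mu\subseteq\Omega(f)$; since the measure center equals $\overline{\bigcup_{\mu}\supp\mu}$, it is contained in $\Omega(f)$. (c) Every minimal set $M_0$ carries (Krylov--Bogolyubov applied to $f|_{M_0}$) an invariant measure whose support is a nonempty closed forward‑invariant subset of $M_0$, hence equals $M_0$; thus $M_0$ lies in the measure center, so $M(f)$, and hence $\overline{M(f)}$, lies in the measure center. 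Granting that $M(f)$ is dense in $\Omega(f)$, (a)--(c) pinch $\Omega(f)=\overline{M(f)}=\text{measure center}$. Finally $M(f)$ is strongly invariant (images of minimal points are minimal, and each minimal point has a preimage inside its own minimal set, which is strongly invariant), and $f$ is a closed map on the compact $X$, so $f(\Omega(f))=f(\overline{M(f)})=\overline{f(M(f))}=\overline{M(f)}=\Omega(f)$. This leaves only the density statement.

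\emph{The construction (heart of the proof).} I would prove: for every $\eps>0$, every $s\ge 1$, all points $y_1,\dots,y_s\in X$, and all integers $0=j_1\le k_1<\dots<j_s\le k_s$ with each gap at least the specification constant for precision $\eps/2$, there is a point $z\in X$ which $\eps$‑traces $(y_m)$ on the windows $[j_m,k_m]$ and is \emph{uniformly recurrent} (i.e.\ $N_f(z,U)$ is syndetic for every neighborhood $U$ of $z$). Since a uniformly recurrent point lies in a minimal set, $z\in M(f)$; applying this with $s=1$, $y_1=x\in\Omega(f)$, $[j_1,k_1]=\{0\}$ yields a minimal point $\eps$‑close to $x$, giving the density. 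To build $z$ I would first use specification to realize the prescribed data with precision $\eps/2$ as a ``stage‑$0$'' finite orbit block $B_0$ on $[0,k_s]$ (one may assume the relevant point is aperiodic — a periodic point is already in $M(f)$ — which lets approximate return times be taken arbitrarily large), then recursively build longer finite blocks $B_{n+1}$ that are self‑similar concatenations of copies of $B_n$ admissibly spaced, realized by specification with precisions $\eps_n\downarrow 0$, and pass to a subsequential limit $z$ of the stage‑$n$ realizations (the copy of $B_0$ sitting at the front of every $B_n$ forces the limit to still $\eps$‑trace the original data). \emph{The main obstacle} is to choose the precisions $\eps_n$, the block lengths, and the numbers of repetitions so that the return times of $z$ to every small ball about $z$ are genuinely syndetic — not merely that $z$ is ``$\eps_0$‑recurrent'' — that is, to reconcile the shrinking precisions with the fact that the specification constant grows as the precision improves; this is a standard Toeplitz‑type argument, which I would carry out in the usual way.

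\emph{Part (2).} With (1) in hand, let $\eps>0$ and let $M$ be the specification constant of $(X,f)$ for precision $\eps/2$. Given $q_1,\dots,q_s\in\Omega(f)$ and admissible windows with gaps $\ge M$, the construction above (with $y_m=q_m$) produces a uniformly recurrent point $z$ which $\eps$‑traces the $q_m$'s on the given windows; being uniformly recurrent, $z\in M(f)\subseteq\Omega(f)$, and since $\Omega(f)$ is $f$‑invariant the whole traced orbit stays in $\Omega(f)$. Hence $f|_{\Omega(f)}$ has the specification property (with constant $M(\eps/2)$ for precision $\eps$). I expect the reduction and part (2) to be routine once the construction lemma is available; essentially all the work is in that lemma.
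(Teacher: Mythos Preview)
Your proposal is correct but takes a genuinely different route from the paper's proof. You build the tracing minimal point \emph{by hand} via a recursive Toeplitz-type specification construction; the paper instead avoids this entirely by invoking the Auslander--Ellis proximal-cell lemma (Lemma~\ref{lem:prox_min}). Concretely, the paper fixes $y_1,\dots,y_n\in\Omega(f)$, uses the non-wandering hypothesis to manufacture deep ``preimages'' $y_{jn+i}$ with $f^{jp}(y_{jn+i})=y_i$ (so the prescribed pattern can be repeated with period $p=b_n+M$), applies specification once to the resulting infinite periodic pattern to get a single point $z$, and then replaces $z$ by a proximal minimal point $q'$: proximality forces $d(f^k(z),f^k(q'))<\eps$ on some full period $[jp,(j+1)p]$, and $q=f^{jp}(q')\in M(f)$ does the $2\eps$-tracing. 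Your approach buys self-containment (no appeal to Auslander--Ellis) and in fact yields the stronger conclusion that the tracing point can be taken in $M(f)$ for \emph{arbitrary} $y_i\in X$, not just $y_i\in\Omega(f)$; the paper's approach is much shorter because the delicate bookkeeping you flag as ``the main obstacle'' is absorbed into the black-box lemma, at the cost of needing the non-wandering hypothesis on the $y_i$ to set up the periodic repetition. Both reductions (measure center $=\overline{M(f)}=\Omega(f)$, and part~(2) following from ``tracing point in $M(f)$'') are essentially the same in the two arguments.
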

\begin{proof}
Fix an $\eps>0$ and let $M$ be provided by the specification property.
Fix any $y_1,\ldots ,y_n\in \Omega(f)$ and any sequence of natural numbers $0\leq a_1\leq b_1 < a_2 \leq b_2 <\ldots \leq b_n$
such that for every $2\leq i \leq n$ we have $a_i-b_{i-1}\geq M$. Denote $p=b_n+M$.

Put $a_{jn+i}=pj+a_i$ and $b_{jn+i}=pj+b_i$ for $j=0,1,\ldots$ and $i=1,\ldots,n$.
Fix any $1\leq i\leq n$ and observe that since $y_i\in \Omega(f)$
there exists an increasing sequence $n_k$ and points $z_k$ such that $\lim_{k\to \infty} f^{n_k}(z_k)=y_i$.
In particular, passing to a subsequence if necessary, for every $j\geq 0$ we can find $k$ such that $n_k>jp$
and hence the following point is well defined:
$$
y_{jn+i}=\lim_{k\to \infty} f^{n_k-jp}(z_k).
$$
Clearly, $f^{jp}(y_{jn+i})=y_i$ and since $X$ is compact, we can apply the specification property also to infinite sequences of points and times. Therefore, there is $z$ such that for any $1\leq i \leq n$, any $j\geq 0$ and any $a_{jn+i}\leq k \leq b_{jn+i}$, we have $d(f^k(y_{jn+i}),f^k(z))\leq\varepsilon$.

Directly from definition, we obtain that
$$
d(f^k(y_{jn+i}),f^k(z))=d(f^{k-jp}(y_{i}),f^k(z)).
$$
By Lemma~\ref{lem:prox_min}, there exists $q'\in M(f)$ such that $q',z$ are proximal. In particular,
there is $j>0$ such that $d(f^k(z),f^k(q'))<\eps$ for every $jp\leq k \leq (j+1)p$. Denote $q=f^{jp}(q')$
and observe that for any $i=1,\ldots,n$ and any $a_i\leq s \leq b_i$, we have
\begin{eqnarray*}
d(f^s(q),f^s(y_i)) &\leq& d(f^{jp+s}(q'),f^{jp+s}(z))+d(f^{jp+s}(z),f^s(y_i))\\
&<&2\eps.
\end{eqnarray*}
This shows that in the definition of specification we can assume that $z\in M(f)$, provided that all the points $y_i \in \Omega(f)$.
This observation has several consequences.

First of all, any point in $\Omega(f)$ can be approximated by a point from $M(f)\subset \Omega(f)$, so indeed $\Omega(f)=\overline{M(f)}$. But $f(M(f))=M(f)$,
so we immediately obtain that $f(\Omega(f))=\Omega(f)$. Since for any choice of segments of orbits in $\Omega(f)$ the tracing point
in $X$ can also belong to $\Omega(f)$, we obtain that $f$ restricted to $\Omega(f)$ has the specification property.

Finally, since the measure center is contained in $\Omega(f)$ and every minimal set is a support of an invariant measure,
we obtain that $\Omega(f)$ is the measure center.
\end{proof}
\begin{thm}\label{mc:almost_s}
Let $(X,f)$ be a dynamical system and $Y$ be the measure center.
Then, $(X, f)$ has the almost specification property if and only if
$(Y,f|_Y)$ has the almost specification property.
\end{thm}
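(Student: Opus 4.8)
The plan is to prove both implications separately. The forward implication (almost specification on $X$ gives almost specification on $Y$) is the harder and more interesting direction; the reverse should follow from the fact that one can always ``extend'' a tracing orbit from $Y$ to $X$ because $Y$ contains all recurrent dynamics and $f$ maps a neighbourhood of $Y$ close to $Y$ (quantitatively, by Corollary~2.4 in \cite{KKO14}, which was recalled in the proof of Lemma~\ref{7.1}). Concretely, for the reverse direction I would take the mistake function $g$ and function $k_g$ witnessing almost specification on $Y$, and given points $x_1,\dots,x_m\in X$ I would first replace each $x_j$ by a nearby point $x_j'\in Y$ (using that every point of $X$ is proximal to a minimal point by Lemma~\ref{lem:prox_min}, and minimal points lie in $Y$), at the cost of shrinking $\eps_j$; then apply almost specification on $Y$ to the $x_j'$. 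The point $z$ obtained lies in $Y\subset X$, and enlarging the allowed mistake set slightly (adjusting $g$ by a bounded additive constant, which does not affect the defining limit $g(n,\eps)/n\to 0$) absorbs the error coming from the $x_j\mapsto x_j'$ replacement.

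For the forward implication I would argue as in the proof of Theorem~\ref{thm:spec}: the guiding principle is that whenever the reference points $y_1,\dots,y_m$ already lie in $Y$, the tracing point $z$ furnished by almost specification on $X$ can itself be taken in $Y$. First I would note that $Y\subset\Omega(f)$ and, since $Y$ is closed and invariant and supports all invariant measures, it suffices to show that almost specification on $X$ with reference points in $Y$ can be traced from inside $Y$. Given $y_1,\dots,y_m\in Y$, lengths $n_j\geq k_g(\eps_j)$, and the corresponding data, I would periodize the configuration exactly as in the proof of Theorem~\ref{thm:spec}: choose a gap $p$ large enough (depending on the mistake function and the $k_g$ values) and, using that each $y_i$ is a limit $\lim_{k}f^{n_k}(z_k)$ of iterates (as $y_i\in\Omega(f)$), build preimages $y_{jm+i}$ with $f^{jp}(y_{jm+i})=y_i$ for all $j\geq 0$. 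Applying almost specification on $X$ to this infinite periodized family (legitimate by compactness, as in Theorem~\ref{thm:spec}) yields a point $z\in X$ whose orbit, on each of infinitely many time windows $[jp,(j+1)p)$, $(g;\cdot,\cdot)$-traces a full copy of the original configuration. Then I would invoke Lemma~\ref{lem:prox_min} to find a minimal point $q'\in M(f)\subset Y$ proximal to $z$; along a suitable return time $jp$ the point $q:=f^{jp}(q')$ is uniformly $\eps$-close to $z$ on the window $[jp,(j+1)p)$, hence $q\in Y$ $(g';\cdot,\cdot)$-traces $y_1,\dots,y_m$ with a slightly enlarged mistake function $g'$ and doubled $\eps_j$'s. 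This establishes almost specification for $(Y,f|_Y)$ after the routine rescaling of $\eps$ and the (harmless) additive adjustment of $g$ and $k_g$.

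The main obstacle I anticipate is bookkeeping the mistake function through the periodization. In the specification case (Theorem~\ref{thm:spec}) there is no ``mistake budget'' to track, so the windows concatenate cleanly; here, when almost specification on $X$ is applied to the long periodized orbit of total length $\approx Np$, it is allowed $g(Np,\eps)$ mistakes \emph{in total}, and one must ensure that at least one window $[jp,(j+1)p)$ receives at most $g(p,\eps)$ of them (or at most a bounded multiple, which can be absorbed into $g'$). This is a pigeonhole/averaging argument: since $g(Np,\eps)/(Np)\to 0$, for $N$ large the average number of mistakes per window is $o(p)$, so infinitely many windows carry few mistakes; one then has to check that ``few mistakes in a window'' still fits inside a set of the form $I(g';p,\eps)$ for an admissible mistake function $g'$, and that the proximality correction via $q'$ does not spoil this. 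The other delicate point, as in Theorem~\ref{thm:spec}, is verifying that the preimages $y_{jm+i}$ are well-defined and that compactness genuinely allows applying almost specification to an infinite family — but this is exactly the argument already carried out there, so it can be cited rather than repeated.
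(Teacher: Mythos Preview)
Your overall strategy for the forward direction (almost specification on $X$ implies almost specification on $Y$) matches the paper's: periodize the given configuration, apply almost specification on $X$, pass to a minimal point via the Auslander--Ellis lemma, and observe that $M(f)\subset Y$. However, two parts of your proposal rest on a misreading of the almost specification property, and your treatment of the reverse direction is not right.

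\textbf{The mistake budget is per piece, not global.} In the definition of almost specification, when you feed in pieces $(x_1,n_1),\ldots,(x_m,n_m)$, the tracing point $z$ satisfies $f^{l_j}(z)\in B_{n_j}(g;x_j,\eps_j)$ for each $j$ separately; that is, each piece gets its own quota of $g(n_j,\eps_j)$ mistakes. So if you periodize the data to $s$ copies, you are applying almost specification to $sm$ pieces, and \emph{every} period automatically receives at most $\sum_j g(n_j,\eps_j)$ mistakes. There is no global budget $g(Np,\eps)$ to distribute, and no pigeonhole argument is needed. This matters, because your proposed pigeonhole fix would not actually close the gap: proximality only hands you \emph{some} window $[rK,(r+1)K)$ where $z$ and the minimal point are close, with no control on which one, so you cannot assume that window is among the ``good'' ones produced by averaging. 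In the paper's (and the correct) argument every window is good, so whatever $r$ proximality gives you works.

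\textbf{Preimages are unnecessary.} The preimage construction you borrow from the proof of the specification theorem is an artifact of how specification is phrased there (tracing $f^i(y_m)$ for $i$ in a fixed window forces you to shift the base point when you shift the window). Almost specification is phrased so that each piece carries its own base point and length; to periodize you simply list the pieces $(x_1,n_1),\ldots,(x_m,n_m)$ again, verbatim. The paper does exactly this: set $K=\sum n_j$, take $z_s$ tracing $s$ repetitions with accuracy $\eps_j/2$, pass to a limit $z$, then choose the proximal window.

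\textbf{The reverse direction.} Your plan to replace each $x_j\in X$ by a ``nearby'' $x_j'\in Y$ via proximality does not work: proximality of $x_j$ to a minimal point $q$ only says $f^n(x_j)$ and $f^n(q)$ are close for \emph{some} (possibly huge) $n$, not that $x_j$ itself is close to anything in $Y$; in general $x_j$ can be far from $Y$. This direction is in fact already known (it is \cite[Theorem~5.1]{KKO14}, and uses instead that orbits spend most of their time near $Y$, absorbing the remaining times into the mistake function), so you should simply cite it.
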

\begin{proof}
The sufficiency has been proved in \cite[Theorem 5.1]{KKO14}. To prove the necessity let us take
a mistake function $g$ for $f$ and let $k_g\colon (0,\infty)\longrightarrow \N$
be the second function from the almost specification property. For any $n$ and $\eps$, denote $G(n,\eps)=g(n,\eps/2)$.
Fix an $m\geq 1$, $\eps_1,\ldots,\eps_m > 0$, points $x_1, \ldots, x_m \in X$, and integers
$n_1 \geq k_{g}(\eps_1/2),\ldots,n_m \geq k_{g}(\eps_m/2)$. As usual, set $n_0=0$ and
$$
l_j=\sum_{s=0}^{j-1}n_s,\,\text{for }j=1, \ldots, m+1.
$$
Let $K=l_m$ and observe that by the almost specification property, for any $s$ there is a point $z_s$
such that for any $i=0, \ldots, s$ and any $j=1, \ldots, m$, we have
$$
f^{l_j+i K}(z_s)\in B_{n_j}(g;x_j,\eps_j/2).
$$
In other words, we repeat $s$-times periodically segments of the orbits of $x_1,\ldots, x_m$
and then use tracing provided by almost specification.
Without loss of generality, we may assume that there exists $z=\lim_{s\to \infty} z_s$.
By Lemma~\ref{lem:prox_min}, there exists a point $y\in M(f)$ such that $\liminf_{n\to \infty} d(f^n(y),f^n(z))=0$.
In particular, if we put $\eps=\min_{j} \eps_j/2$ then there is $r$ such that $d(f^t(z),f^t(y))<\eps$ for all integers $t\in [rK, (r+1)K]$.

Since $I(g;n_j,\eps_j/2)$ is finite, without loss of generality, we may assume that for any $j$ there is $A_j\in I(g;n_j,\eps_j/2)$
such that $f^{l_j+rK}(z_s)\in B_{A_j}(g;x_j,\eps_j/2)$ for any $s$ sufficiently large.
But, then, $d(f^{l_j+rK+i}(z), f^{i}(x_j))<\eps_j+\eps$ for every $i\in A_j$. Clearly,
$f^{rK}(z)\in M(f)$, hence in the definition of almost specification we may assume that
the tracing point comes from $M(f)$ (simply, we take as the mistake function $G$
and put $k_G(\gamma)=k_g(\gamma/2)$ for $\gamma>0$).

The result follows from the fact that every minimal set is a support of an
invariant measure, and so the measure center must contain the set $M(f)$.
\end{proof}

Examples presented in \cite{KKO14} show that it may happen that the map with almost specification is transitive
while the measure center is a single point. Hence it is not possible to repeat the statement of Theorem~\ref{thm:spec}
in the case of a dynamical system with the almost specification property. Still, we have the following property.

\begin{thm}
If $(X,f)$ has the almost specification property, then $\overline{M(f)}$ is the measure center of $f$.
\end{thm}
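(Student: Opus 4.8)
The plan is to prove separately the two inclusions $\overline{M(f)}\subseteq Y$ and $Y\subseteq\overline{M(f)}$, where $Y$ denotes the measure center; only the second will use almost specification. For the first inclusion, recall (as already used in the proof of Theorem~\ref{mc:almost_s}) that every minimal set is the support of an invariant measure, obtained by applying the Krylov--Bogolyubov theorem to the restricted system; hence $M(f)\subseteq Y$. Since the closure of a measure saturated set is measure saturated, $Y$ is closed, so $\overline{M(f)}\subseteq Y$.

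For the reverse inclusion I would first reduce to a convenient form. The measure center equals $\overline{\bigcup_\mu\supp\mu}$ with $\mu$ ranging over the \emph{ergodic} invariant measures (this is immediate from the definition of measure saturated set together with the ergodic decomposition), so it suffices to fix an ergodic $\mu$, a point $p\in\supp\mu$, and an $\eps>0$, and to produce a point of $M(f)$ within $\eps$ of $p$. Pick a $\mu$-generic point $x$; since $B(p,\eps/4)$ is open and $p\in\supp\mu$, the ergodic theorem gives $\liminf_n\frac1n|D\cap[0,n)|\ge\mu(B(p,\eps/4))=:c>0$, where $D:=\{i\in\Zp: d(f^i(x),p)<\eps/4\}$. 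Then choose a block length $N$ so large that $N\ge k_g(\eps/4)$, that $g(N,\eps/4)<\tfrac c3N$, and that $|D\cap[0,N)|\ge\tfrac{2c}{3}N$; in particular $|D\cap[0,N)|$ plus $N-g(N,\eps/4)$ exceeds $N$, so any $A\in I(g;N,\eps/4)$ meets $D\cap[0,N)$.

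Next I would feed the single orbit segment $x,f(x),\dots,f^{N-1}(x)$ into almost specification repeatedly: for each $s\ge1$ apply it to the $s$ points $x_1=\dots=x_s=x$ with error $\eps/4$ and lengths $n_1=\dots=n_s=N$, getting $z_s$ with $f^{(j-1)N}(z_s)\in B_N(g;x,\eps/4)$ for $j=1,\dots,s$. Since $I(g;N,\eps/4)$ is finite, a diagonal subsequence produces $z=\lim z_{s}$ together with, for every $j\ge1$, a fixed $A_j\in I(g;N,\eps/4)$ such that $d(f^{(j-1)N+i}(z),f^i(x))<\eps/4$ for all $i\in A_j$ (to keep strict inequalities one simply runs the whole construction with $\eps/5$ and relaxes at the end). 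By the Auslander--Ellis lemma (Lemma~\ref{lem:prox_min}) there is $w\in M(f)$ proximal to $z$. It remains to transfer the ``visits near $p$'' of $z$ to $w$, and this is the step I expect to be the only real obstacle: a priori proximality supplies closeness of $z$ and $w$ only along one sequence of times, whereas the near-$p$ times of $z$ form merely a positive-density (not syndetic) set, so they could miss that sequence. The resolution is the periodic block structure, exactly as in the proof of Theorem~\ref{mc:almost_s}: choose $\eta>0$ with $d(a,b)<\eta\Rightarrow d(f^m(a),f^m(b))<\eps/4$ for $0\le m\le2N$, pick by proximality a time $k$ with $d(f^k(z),f^k(w))<\eta$ so that $z$ and $w$ are $\eps/4$-close on all of $[k,k+2N]$, and note that this window of length $2N$ contains a full block $[(j-1)N,jN)$ for some $j\ge1$. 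On that block $z$ traces $x$ off the almost full set $A_j$, which meets $D\cap[0,N)$ at some $i^*$; then $d(f^{(j-1)N+i^*}(w),f^{i^*}(x))<\eps/2$ and $d(f^{i^*}(x),p)<\eps/4$, so $f^{(j-1)N+i^*}(w)$, which lies in the minimal set of $w$ and hence in $M(f)$, is within $\eps$ of $p$. As $\eps$ was arbitrary this gives $p\in\overline{M(f)}$, completing the argument.
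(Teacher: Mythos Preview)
Your argument is correct and follows essentially the same route as the paper's proof. Both reduce via ergodic decomposition and the pointwise ergodic theorem to a generic point whose orbit visits a small ball with positive lower density, then feed that orbit segment periodically into almost specification, pass to a limit, invoke Auslander--Ellis to obtain a proximal minimal point, and finally use a pigeonhole count (good tracing indices plus visit indices exceed the block length) to land a minimal point near the target. The paper compresses the middle steps into a reference to ``the technique employed in the proof of Theorem~\ref{mc:almost_s}'', while you spell them out; your framing (fix $p\in\supp\mu$) versus the paper's (fix an open $U$ meeting the measure center) is a cosmetic difference only. Your explicit discussion of why proximality along a sparse sequence suffices---because the periodic block structure guarantees that any window of length $2N$ contains a full block---is exactly the mechanism hidden in the paper's back-reference.
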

\begin{proof}
Let $A$ be the measure center for $(X,f)$. Clearly, $M(f)\subset A$.
Now,  let $U$ be any open set, such that $A\cap U\neq \emptyset$.
There is an open set $V$ and $\eps>0$ such that $B(\overline{V},\eps)\subset U$ and $V\cap A\neq \emptyset$.
Then, there is an $f$-invariant measure $\mu$
such that $\mu(V)>0$. Using ergodic decomposition \cite[p. 153]{W1982}, we get an
ergodic measure $\nu$ such that $\nu(V) > 0$. By the Pointwise Ergodic
Theorem \cite[Thmeorem 1.14]{W1982}, there is a point $x \in X$ such that $\underline{d}(N(x,V)) = \nu(V) > 0$.
In particular, there are $\lambda>0$ and $K>0$ such that $|N(x,V)\cap [0,n)|>\lambda n$ for all $n\geq K$.
We can increase $K$ to ensure also that $g(n,\eps)< (1-\lambda)n$ for all $n\geq K$.
By the technique employed in the proof of Theorem~\ref{mc:almost_s}, there is $z\in M(f)$ such that
$$
|\set{0\leq i \leq n : d(f^i(z),f^i(x))<\eps}|\geq (1-\lambda)n,
$$
and, hence, there is $i$ such that $f^i(x)\in V$ and $d(f^i(z),f^i(x))<\eps$,
showing that $f^i(z)\in U$. We have just proved that $M(f)\cap U\neq \emptyset$
and hence $A\subset \overline{M(f)}$. The proof is completed.
\end{proof}
\begin{cor}\label{Corollary 6.9}
If $(X,f)$ has the almost specification property, then it has the asymptotic average shadowing property.
\end{cor}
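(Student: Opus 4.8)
The plan is to push the problem down to the measure center, use the known surjective case of the implication ``almost specification $\Rightarrow$ asymptotic average shadowing'' there, and then pull the conclusion back up to $X$.

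Concretely, write $Y$ for the measure center of $(X,f)$. By Theorem~\ref{mc:almost_s}, $(Y,f|_Y)$ has the almost specification property. I would then note that $f|_Y\colon Y\to Y$ is onto: the measure center is the closure of $\bigcup_\mu\supp\mu$ taken over all $f$-invariant Borel probability measures $\mu$, and each $\supp\mu$ is a closed invariant set on which $f$ acts surjectively (if some point of $\supp\mu$ were not in $f(\supp\mu)$ one would get an open set $V$ with $\mu(V)>0$ and $f^{-1}(V)\cap\supp\mu=\emptyset$, contradicting $\mu(f^{-1}(V))=\mu(V)$); taking the union over $\mu$, then the closure, and using continuity of $f$ together with compactness of $Y$ gives $f(Y)=Y$. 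Consequently the result of \cite{KKO14} that a surjective system with the almost specification property has the asymptotic average shadowing property applies on $Y$, so $(Y,f|_Y)$ has the asymptotic average shadowing property.

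It remains to transfer this property from $Y$ to $X$. This is exactly the content of a result in \cite{KKO14}, but it can also be read off from Lemma~\ref{7.1}: given an asymptotic average pseudo-orbit $\set{x_i}_{i=0}^\infty$ of $f$, Lemma~\ref{7.1} supplies an asymptotic average pseudo-orbit $\set{y_i}_{i=0}^\infty\subset Y$ of $f|_Y$ and a density-zero set $J$ with $\lim_{i\notin J}d(x_i,y_i)=0$. Picking $z\in Y$ with $\lim_{n\to\infty}\frac1n\sum_{i=0}^{n-1}d(f^i(z),y_i)=0$ and splitting $\frac1n\sum_{i=0}^{n-1}d(f^i(z),x_i)$ into the sum over $[0,n)\cap J$ (bounded by $\diam X\cdot|J\cap[0,n)|/n\to 0$) and the sum over $[0,n)\setminus J$ (bounded, via the triangle inequality, by $\frac1n\sum_{i=0}^{n-1}d(f^i(z),y_i)$ plus the Ces\`aro average of a bounded sequence that tends to $0$ off a density-zero set), one gets $\lim_{n\to\infty}\frac1n\sum_{i=0}^{n-1}d(f^i(z),x_i)=0$. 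Hence every asymptotic average pseudo-orbit of $f$ is asymptotically shadowed on average by a point of $X$, i.e.\ $(X,f)$ has the asymptotic average shadowing property.

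The substantive work has already been absorbed into Theorem~\ref{mc:almost_s}, so no real obstacle remains in the corollary itself. The only two points needing (minor) attention are the surjectivity of $f|_Y$, which is what makes the \cite{KKO14} implication available on $Y$, and the elementary observation that a uniformly bounded sequence converging to $0$ outside a set of density zero has vanishing Ces\`aro averages.
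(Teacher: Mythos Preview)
Your proposal is correct and follows essentially the same route as the paper: pass to the measure center $Y$ via Theorem~\ref{mc:almost_s}, use surjectivity of $f|_Y$ to invoke the surjective case of ``almost specification $\Rightarrow$ AASP'' from \cite{KKO14}, and then lift AASP back to $X$. The only difference is that you spell out the surjectivity of $f|_Y$ and the lifting step (via Lemma~\ref{7.1}) explicitly, whereas the paper simply asserts the former and cites \cite[Theorem~5.1]{KKO14} for the latter.
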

\begin{proof}
By Theorem~\ref{mc:almost_s}, $(Y, f|_Y)$ has the almost specification property. Clearly, $f|_Y$ is surjective,
hence we may apply \cite[Theorem~3.5]{KKO14} to obtain that $(Y, f|_Y)$ has the asymptotic average shadowing property. Now,
\cite[Theorem~5.1]{KKO14} implies that $(X,f)$ has the asymptotic average shadowing property, and so the proof is completed.
\end{proof}

\section{Sensitivity in systems with $\underline{d}$-shadowing and $\overline{d}$-shadowing  properties}\label{sec:sens}

In this section, we continue the research of \cite{DH2010} and \cite{Gu2007-1, KO2010, Niu2011, ODH2014},
proving some consequences of $\underline{d}$-shadowing and $\overline{d}$-shadowing
properties for the dynamics of the systems. Among other things, we show that in most cases these systems are
syndetically transitive and syndetically sensitive.

\subsection{Syndetic transitivity for
systems with the $d$-shadowing property}

This subsection shows that every system with dense minimal points having the $\overline{d}$-shadowing
property or the $\underline{d}$-shadowing property is totally syndetically transitive, which
extends Theorem 2.1 and Theorem 2.2 in \cite{Niu2011}.

The proof of the following Lemma is straightforward and is left to the reader.

\begin{lem}\label{Lemma 8.1}
If $(X, f)$ is a topologically transitive system with a dense set of
minimal points, then $(X, f)$ is totally syndetically transitive.
\end{lem}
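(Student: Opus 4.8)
The plan is to extract the following core fact, which is the real content: \emph{if $(X,f)$ is transitive and $M(f)$ is dense, then $N_{f}(U,V)\in\F_{s}$ for every pair of nonempty open sets $U,V\subset X$}. Once this is known, one upgrades it to the statement of the lemma by applying it to every iterate $f^{n}$: the hypotheses pass to $f^{n}$ (it is transitive, and the $f$-orbit closure of a minimal point is a finite union of $f^{n}$-minimal sets cyclically permuted by $f$, so $M(f)\subset M(f^{n})$ and this set is still dense), hence $N_{f^{n}}(U,V)\in\F_{s}$ for every $n$, which is total syndetic transitivity.

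To prove the core fact, fix nonempty open sets $U,V$. The step I expect to be the only delicate one is to manufacture a \emph{single} minimal point whose forward orbit visits both $U$ and $V$: merely picking a minimal point inside $U$ (or inside $V$) does not work, because its orbit closure is a minimal set that may avoid the other set entirely. Instead, set
\[
W_{U}=\bigcup_{n\ge 0}f^{-n}(U),\qquad W_{V}=\bigcup_{n\ge 0}f^{-n}(V).
\]
Both are open, and both are dense: given any nonempty open $O$, transitivity furnishes $n$ with $f^{n}(O)\cap U\ne\emptyset$, i.e.\ $O\cap f^{-n}(U)\ne\emptyset$, so $O$ meets $W_{U}$, and likewise $W_{V}$. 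Hence $W_{U}\cap W_{V}$ is nonempty and open, so by density of $M(f)$ it contains a minimal point $q$; by construction there are $k_{0},m\ge 0$ with $f^{k_{0}}(q)\in U$ and $f^{m}(q)\in V$.

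Let $K=\overline{\{f^{i}(q):i\ge 0\}}$, a minimal set with $q\in K$. Since $f^{m}(q)\in V\cap K$, the set $V\cap K$ is nonempty and relatively open in $K$, so by minimality $\bigcup_{i\ge 0}f^{-i}(V\cap K)=K$, and compactness of $K$ gives $L\in\N$ with $K=\bigcup_{i=0}^{L}f^{-i}(V\cap K)$. Thus for every $q'\in K$ and every $a\ge 0$ there is $i\in\{0,\dots,L\}$ with $f^{a+i}(q')\in V$; taking $q'=q$ shows $N_{f}(q,V)\cap[a,a+L]\ne\emptyset$ for all $a\ge 0$. Now fix $a\ge 0$ and pick $t\in N_{f}(q,V)\cap[k_{0}+a,\,k_{0}+a+L]$. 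Then $f^{k_{0}}(q)\in U$ and $f^{t-k_{0}}(f^{k_{0}}(q))=f^{t}(q)\in V$, so $t-k_{0}\in N_{f}(U,V)$ while $t-k_{0}\in[a,a+L]$. As $a$ was arbitrary, $N_{f}(U,V)$ has bounded gaps, that is $N_{f}(U,V)\in\F_{s}$, proving the core fact and hence the lemma. The only genuine obstacle here is the bridging construction via $W_{U}\cap W_{V}$; everything after it is just the standard observation (packaged through compactness) that a point of a minimal set enters any nonempty relatively open subset along a syndetic set of times.
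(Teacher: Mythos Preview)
The paper leaves this proof to the reader, so there is no argument to compare against directly. Your proof of the ``core fact'' (transitive plus dense minimal points implies $N_f(U,V)\in\F_s$) is correct and is the standard one; the bridging construction through $W_U\cap W_V$ to produce a single minimal point whose orbit visits both $U$ and $V$ is exactly the right move.

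The gap is in your upgrading step: you assert that $f^{n}$ is transitive, but this does not follow from the hypotheses and is false in general. Take two disjoint copies of the full $2$-shift and let $f$ act as the shift composed with swapping the two copies; then $f$ is transitive with dense periodic (hence minimal) points and $X$ has no isolated points, yet $f^{2}$ preserves each copy and is not transitive. Even more simply, $X=\Z/6\Z$ with $f(x)=x+1$ is minimal, but $f^{2}$ has two orbits. Since syndetic transitivity of $f^{n}$ would in particular force $f^{n}$ to be transitive, the lemma read literally as ``every iterate is syndetically transitive'' cannot hold, and your passage to $f^{n}$ cannot be repaired. What the paper actually \emph{uses} (in the proof of Theorem~\ref{Theorem 8.2}) is exactly your core fact: having shown $(X,f)$ transitive, the authors invoke Lemma~\ref{Lemma 8.1} only to conclude syndetic transitivity of $f$ itself; the word ``totally'' in Theorem~\ref{Theorem 8.2} is then obtained by appealing to Corollary~\ref{dshad_higher} to see that each $(X,f^{n})$ separately inherits the shadowing hypothesis and rerunning the entire transitivity argument for $f^{n}$. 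So your core fact is precisely the content that is needed, and the ``totally'' in the lemma statement appears to be a slip.
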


\begin{thm}\label{Theorem 8.2}
Let $(X, f)$ be a dynamical system with a dense set of minimal points. If $(X,f)$ has
the $\overline{d}$-shadowing property or the $\underline{d}$-shadowing property, then it is totally syndetically transitive.
\end{thm}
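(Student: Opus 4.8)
The plan is to prove that $(X,f)$ is topologically transitive; the theorem then follows at once from Lemma~\ref{Lemma 8.1}. So fix nonempty open sets $U,V\subset X$; I want to produce $n\in\N$ with $f^n(U)\cap V\neq\emptyset$. Using density of minimal points, I choose minimal $p\in U$ and $q\in V$ and $\eps>0$ with $B(p,\eps)\subset U$ and $B(q,\eps)\subset V$. The orbit closures $\overline{O(p)},\overline{O(q)}$ are minimal sets, so by the standard uniform recurrence in minimal systems there is $M\in\N$ such that for every $x\in\overline{O(p)}$ (resp. $x\in\overline{O(q)}$) and every $j\ge 0$ some $s\in\{0,\dots,M\}$ satisfies $f^{j+s}(x)\in B(p,\eps/3)$ (resp. $B(q,\eps/3)$). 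Using uniform continuity of $f^0,\dots,f^M$ I pick $\eps_0>0$ with $d(x,y)<\eps_0\Rightarrow d(f^s(x),f^s(y))<\eps/3$ for all $s\le M$, and let $\delta>0$ be furnished by the $\overline{d}$- (resp. $\underline{d}$-) shadowing property for $\eps_0$.

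Next I build the approximate trajectory. Let $\xi=\set{x_i}_{i=0}^\infty$ be a concatenation of finite blocks $B_1,B_2,\dots$, where $B_k$ is the initial orbit segment $p,f(p),\dots,f^{L_k-1}(p)$ when $k$ is odd and $q,f(q),\dots,f^{L_k-1}(q)$ when $k$ is even, with lengths $L_k$ to be fixed below. The only indices $i$ with $d(f(x_i),x_{i+1})\ge\delta$ are the block boundaries, and in all cases the number of these inside $[0,n)$ is $o(n)$, so $\xi$ is a $\delta$-ergodic pseudo-orbit. The shadowing property then yields $z\in X$ with $G:=\Lambda(z,\xi,\eps_0)$ of upper density $>1/2$ (resp. lower density $>0$). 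Write $\widetilde P$ for the union of the odd blocks and $\widetilde Q$ for the union of the even ones.

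The heart of the matter is to arrange the $L_k$ so that $G$ is forced to meet both some $p$-block and some later $q$-block. In the $\overline{d}$-shadowing case I take $L_k=k$; then $\widetilde P$ and $\widetilde Q$ each have density $1/2$, so from $\overline{d}(G)>1/2$ and subadditivity of upper density one gets $\overline{d}(G\cap\widetilde P)>0$ and $\overline{d}(G\cap\widetilde Q)>0$, hence $G$ meets infinitely many blocks of each parity. In the $\underline{d}$-shadowing case I take $L_k$ growing so fast that $L_k/(L_1+\dots+L_{k-1})\to\infty$; a one-line density estimate then shows that every set of positive lower density, in particular $G$, meets every $B_k$ with $k$ large, in a number of positions tending to infinity. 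In either case I may pick $i_1\in G$ lying in a $p$-block and $i_2\in G$ lying in a $q$-block with $i_2>i_1+M$.

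It remains to convert this into a genuine transition $U\to V$ along the orbit of $z$. Say $i_1$ sits at position $j$ of its $p$-block, so $x_{i_1}=f^j(p)\in\overline{O(p)}$ and $d(f^{i_1}(z),f^j(p))<\eps_0$; I choose $s\le M$ with $f^{j+s}(p)\in B(p,\eps/3)$, whence $d(f^{i_1+s}(z),f^{j+s}(p))<\eps/3$ and therefore $f^{i_1+s}(z)\in B(p,\eps)\subset U$. Symmetrically, from $i_2$ and the $q$-block I obtain $t\le M$ with $f^{i_2+t}(z)\in B(q,\eps)\subset V$, and $i_2+t\ge i_2>i_1+M\ge i_1+s$. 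Setting $w=f^{i_1+s}(z)\in U$ and $n=(i_2+t)-(i_1+s)\ge 1$ gives $f^n(w)=f^{i_2+t}(z)\in V$, so $n\in N_f(U,V)$; hence $(X,f)$ is transitive, and Lemma~\ref{Lemma 8.1} completes the proof. The step I expect to be the obstacle is exactly the one in the two previous paragraphs: unlike for average shadowing, where the traced set has density close to $1$, here $G$ need only be of density $>1/2$ or $>0$, so one cannot assume that $z$ traces $\xi$ near the ``return-to-$p$'' positions; the block-growth dichotomy together with the finite-iterate uniform-continuity trick, which turns a single good tracing instant into an honest visit to $U$ (resp. $V$) a bounded number of steps later, is what makes the weaker hypothesis suffice.
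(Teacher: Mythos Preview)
Your proof is correct and follows essentially the same approach as the paper: the paper also reduces to transitivity via Lemma~\ref{Lemma 8.1}, builds an ergodic pseudo-orbit alternating orbit segments of two minimal points with block lengths $L_k\sim k$ in the $\overline{d}$-case and exponentially growing $L_k$ in the $\underline{d}$-case, uses the same density dichotomy to land the tracing set in blocks of both parities, and applies the identical uniform-continuity plus syndetic-return trick to convert nearby tracing instants into genuine visits to $U$ and $V$. The only cosmetic difference is that the paper writes the pseudo-orbit as $f^i(u)$, $f^i(v)$ with a single running index rather than restarting each block; the density argument you spell out via subadditivity is exactly what the paper asserts without detail.
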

\begin{proof}
First, we present a proof for the case of the $\overline{d}$-shadowing property.
Given any pair of nonempty open subsets $U, V\subset X$, there exist
$u\in U\cap M(f)$, $v\in V\cap M(f)$ and $\eps>0$,
such that $B(u, 2\eps)\subset U$ and $B(v, 2\eps)\subset V$. Points
$u, v$ are minimal, hence there exists $K>0$ such that for any $n\in \N$, we have $\left[n,
n+K\right]\cap N_{f}(u, B(u, \eps))\neq
\emptyset \neq \left[n, n+K\right]\cap N_{f}(v,
B(v, \eps))$. There exists $\delta>0$ such that for any $y, z\in X$,
$$
d(y, z)<\delta \quad \Longrightarrow \quad d(f^{n}(y), f^{n}(z))<\eps\quad \text{for all }0\leq n \leq K.
$$
Put $L_{1}=2$ and $L_{n}=L_{n-1}+n$ for $n\geq 2$ and then denote
$\mathscr{A}=\N \cap \bigcup_{n\in \mathbb{N}}\left[L_{2n},
L_{2n+1}\right)\cup \left[0,L_1\right)$ and $\mathscr{B}=\N \cap \bigcup_{n\in
\mathbb{N}}\left[L_{2n-1}, L_{2n}\right)$.
It is not difficult to
check that
$$
d\left(\mathbb{Z}^{+}\setminus \left\{L_{1}, L_{2},
\ldots\right\}\right)=1,
\quad \text{ and }\quad
d(\mathscr{A})=d(\mathscr{B})=\frac{1}{2}.
$$
Choose a sequence $\left\{x_{i}\right\}_{i=0}^{\infty}$ with
$$
x_{i}=\begin{cases}
f^{i}(u), & {\rm } \ i\in\mathscr{A}, \\
f^{i}(v), & {\rm } \ i\in\mathscr{B}.
\end{cases}
$$
Directly by definition, we obtain that
$\left\{x_{i}\right\}_{i=0}^{\infty}$ is a $\gamma$-ergodic
pseudo-orbit for any $\gamma>0$ and so by the $\overline{d}$-shadowing
property of $f$, there exists $x\in X$ such that $\overline{d}\left(\Lambda(x,
\left\{x_{i}\right\}_{i=0} ^{\infty}, \delta)\right)>1/2$.
But, then, both sets $\mathscr{A}\cap
\Lambda(x, \left\{x_{i}\right\}_{i=0}^{\infty}, \delta)$ and
$\mathscr{B}\cap \Lambda(x, \left\{x_{i}\right\}_{i=0}^{\infty}, \delta)$ are infinite. Therefore, there exist $s\in
\mathscr{A}\cap \Lambda(x, \left\{x_{i}\right\}_{i=0}^{\infty}, \delta)$ and $t\in \mathscr{B}\cap \Lambda(x,
\left\{x_{i}\right\}_{i=0}^{\infty}, \delta)$ such that
$K < t-s$. Clearly, $d(f^{s}(x), f^{s}(u))=d(f^{s}(x), x_{s})<\delta$ and $d(f^{t}(x), f^{t}(v))
=d(f^{t}(x), x_{t})<\delta$, and points $u,v$ are minimal hence there exist $0\leq s', t'\leq K$
such that
$f^{s+s'}(u)\in B(u, \eps)$ and $f^{t+t'}(v)\in
B(v, \eps)$ and also $d(f^{s+s'}(u),f^{s+s'}(x))<\eps$ and $d(f^{t+t'}(v),f^{t+t'}(x))<\eps$.
In particular, $(t+t')-(s+s')>0$ and $f^{s+s'}(x)\in B(u,2\eps)\subset U$, $f^{t+t'}(x)\in B(v,2\eps)\subset V$.
This proves that $(X,f)$ is transitive, hence syndetically transitive by Lemma \ref{Lemma 8.1}.

By Corollary~\ref{dshad_higher}, the dynamical system $(X,f^n)$ has $\underline{d}$-shadowing for
every $n=1,2,\ldots$, which completes the proof of the case of the $\overline{d}$-shadowing property.

If $(X,f)$ has the $\underline{d}$-shadowing property then the same proof works,
with the only modification of the definitions
$l_{1}=L_{1}=2$, $l_{n}=2^{l_{1}+\cdots +l_{n-1}}$ and
$L_{n}=l_{1}+\cdots +l_{n}$ for $n\geq 2$.
\end{proof}

Since $\M_0$ contains $\M_\alpha$ for any $\alpha \in [0,1)$, we obtain the following.

\begin{cor}\label{Corollary 8.3}
If a dynamical system $(X, f)$ with a dense set of minimal points has
the $\mathscr{M}^{\alpha}$-shadowing property for some $\alpha\in \left[1/2, 1\right)$
or the $\mathscr{M}_\alpha$-shadowing for some $\alpha\in [0,1)$, then
it is totally syndetically transitive.
\end{cor}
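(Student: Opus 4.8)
The plan is to derive Corollary~\ref{Corollary 8.3} directly from Theorem~\ref{Theorem 8.2} by showing that the hypotheses of the corollary imply (in each case) one of the two shadowing properties appearing in that theorem. For the case $\alpha\in[1/2,1)$ and the $\mathscr{M}^{\alpha}$-shadowing property: since $\overline{d}$-shadowing is, by definition, the $\mathscr{M}^{1/2}$-shadowing property and $\mathscr{M}^{\alpha}\subset\mathscr{M}^{1/2}$ for every $\alpha\geq 1/2$ (a set with $\overline{d}>\alpha\geq 1/2$ certainly has $\overline{d}>1/2$), the $\mathscr{M}^{\alpha}$-shadowing property implies the $\overline{d}$-shadowing property. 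Hence Theorem~\ref{Theorem 8.2} applies and $(X,f)$ is totally syndetically transitive.

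For the case $\alpha\in[0,1)$ and the $\mathscr{M}_{\alpha}$-shadowing property: I would first observe that $\mathscr{M}_{\alpha}\subset\mathscr{M}_{0}$ for every $\alpha\in[0,1)$ (a set with $\underline{d}>\alpha\geq 0$ has $\underline{d}>0$), so the $\mathscr{M}_{\alpha}$-shadowing property implies the $\mathscr{M}_{0}$-shadowing property, which is precisely the $\underline{d}$-shadowing property. Again Theorem~\ref{Theorem 8.2} gives total syndetic transitivity. The only mild subtlety worth spelling out is the direction of the inclusion on the level of shadowing: if $\mathscr{F}_1\subset\mathscr{F}_2$ are Furstenberg families, then the ($\mathscr{F}_1$)-shadowing property implies the ($\mathscr{F}_2$)-shadowing property, because a tracing point $z$ with $\Lambda(z,\xi,\eps)\in\mathscr{F}_1$ automatically satisfies $\Lambda(z,\xi,\eps)\in\mathscr{F}_2$; the same $\delta$ works.

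There is essentially no obstacle here — the corollary is a pure monotonicity observation and the sentence preceding it in the excerpt (``Since $\mathscr{M}_0$ contains $\mathscr{M}_\alpha$ for any $\alpha\in[0,1)$'') already isolates the key inclusion. The proof can therefore be stated in two or three lines, and the only thing to be careful about is handling both the $\mathscr{M}^{\alpha}$ branch (reducing to $\overline{d}$-shadowing) and the $\mathscr{M}_{\alpha}$ branch (reducing to $\underline{d}$-shadowing) explicitly, then invoking Theorem~\ref{Theorem 8.2} in each.
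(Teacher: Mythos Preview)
Your proposal is correct and follows essentially the same approach as the paper, whose entire justification is the single sentence preceding the corollary noting that $\mathscr{M}_0$ contains $\mathscr{M}_\alpha$ for all $\alpha\in[0,1)$. You have in fact been more thorough than the paper by also spelling out the analogous inclusion $\mathscr{M}^{\alpha}\subset\mathscr{M}^{1/2}$ for $\alpha\geq 1/2$, which reduces the first branch to $\overline{d}$-shadowing; the paper leaves this implicit.
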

\begin{rem}
Example \ref{Example 4.4} shows that for a system with dense minimal points, the $\mathscr{M}^{\alpha}$-shadowing
property ($\alpha\in \left[0, 1/2\right)$) is not sufficient for having the transitivity.
\end{rem}

\begin{cor}\label{Corollary 8.4}
If a dynamical system $(X, f)$ with a dense set of minimal points has the $\mathscr{M}_\alpha$-shadowing
for some $\alpha\in\left[1/2, 1\right)$, then it is weakly mixing.
\end{cor}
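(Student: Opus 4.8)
The plan is to reduce the hypothesis to $\mathscr M_{1/2}$-shadowing and then, for arbitrary nonempty open $U_1,U_2,V_1,V_2\subseteq X$, to produce a single $n\in\N$ with $f^n(U_i)\cap V_i\neq\emptyset$ for $i=1,2$; since products of open sets form a base of $X\times X$ and transitivity can be tested on a base, this yields transitivity of $f\times f$, i.e. weak mixing. The reduction is immediate: $\mathscr M_\alpha\subseteq\mathscr M_{1/2}$ whenever $\alpha\geq 1/2$, so $\mathscr M_\alpha$-shadowing implies $\mathscr M_{1/2}$-shadowing, and in particular $f$ is transitive (Corollary~\ref{Corollary 8.3}). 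The reason the value $1/2$ is the right threshold is that two subsets of $\Zp$ with lower density \emph{strictly} above $1/2$ necessarily intersect — in fact in a set of positive lower density — whereas the analogous statement for upper density is false.

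Fix the four open sets. Using density of minimal points choose minimal points $u_i\in U_i$, $v_i\in V_i$ of $f$ and $\eps>0$ with $B(u_i,2\eps)\subseteq U_i$ and $B(v_i,2\eps)\subseteq V_i$. As in the proof of Theorem~\ref{Theorem 8.2} I would form a partition $\Zp=\mathscr A\sqcup\mathscr B$ into consecutive blocks whose endpoints $L_0<L_1<\cdots$ grow very fast, say $L_{m+1}\geq 2^{L_m}$, with $\mathscr A=\bigcup_k[L_{2k},L_{2k+1})$ and $\mathscr B=\bigcup_k[L_{2k+1},L_{2k+2})$, and two coupled sequences
\[
\xi^{(j)}_i=
\begin{cases}
 f^i(u_j), & i\in\mathscr A,\\
 f^i(v_j), & i\in\mathscr B,
\end{cases}
\qquad j=1,2.
\]
Each $\xi^{(j)}$ agrees with a genuine orbit except at the block boundaries, a set of density zero, hence is a $\gamma$-ergodic pseudo-orbit of $f$ for every $\gamma>0$.

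Next I would apply $\mathscr M_{1/2}$-shadowing, with $\delta>0$ chosen small enough that $\delta$-closeness is preserved, up to $\eps$, over a fixed number $K$ of iterations (the bound $K$ coming from minimality of the chosen points), to obtain $z_1,z_2$ with $\underline d(\Lambda(z_j,\xi^{(j)},\delta))>1/2$. Then $\Lambda:=\Lambda(z_1,\xi^{(1)},\delta)\cap\Lambda(z_2,\xi^{(2)},\delta)$ still has $\underline d(\Lambda)>0$, hence is infinite; and because the block lengths grow superexponentially, any set of positive lower density meets both $\mathscr A$ and $\mathscr B$ in infinitely many blocks (if $\underline d(\Lambda)\geq\rho>0$ then $|\Lambda\cap[L_{2k+1},L_{2k+2})|\geq\tfrac{\rho}{2}L_{2k+2}-L_{2k+1}\to\infty$, and similarly for $\mathscr A$). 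Choose $s\in\Lambda\cap\mathscr A$ and $t\in\Lambda\cap\mathscr B$ lying in blocks far enough apart that $t-s>K$. From $s,t\in\Lambda$ one reads off $d(f^s(z_j),f^s(u_j))<\delta$ and $d(f^t(z_j),f^t(v_j))<\delta$ for $j=1,2$, and it remains to push the pair $(f^s(z_1),f^s(z_2))$ into $U_1\times U_2$ and $(f^t(z_1),f^t(z_2))$ into $V_1\times V_2$ within $\leq K$ steps by a \emph{single} offset in each case, and set $n=(t+t')-(s+s')$.

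The crux, and where the real work lies, is precisely this resynchronisation: one offset $s'\leq K$ must bring $f^{s+s'}(z_1)$ into $U_1$ and $f^{s+s'}(z_2)$ into $U_2$ at once, but the orbits of $u_1$ and $u_2$ return to $U_1,U_2$ only along a priori unrelated syndetic sets. To get a common offset one must replace $(u_1,u_2)$ (and likewise $(v_1,v_2)$) by a point that is uniformly recurrent for $f\times f$ and whose product orbit visits $U_1\times U_2$, which is where the Auslander--Ellis theorem (Lemma~\ref{lem:prox_min}) has to be combined carefully with density of minimal points; granting this, $s',t'$ exist, $n>0$, and $f^n(U_i)\cap V_i\neq\emptyset$ for $i=1,2$. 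A secondary technical point is that $\Lambda$ is only known to have positive, not $>1/2$, lower density, so the balanced density-$1/2$ blocks from the proof of Theorem~\ref{Theorem 8.2} cannot be reused verbatim; the superexponential growth of the blocks is exactly what makes positive lower density sufficient to meet both families. The hypothesis $\alpha\geq 1/2$ enters only through the positivity of $\underline d(\Lambda)$.
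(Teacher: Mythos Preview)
Your core idea is exactly the paper's: two sets of lower density strictly above $1/2$ intersect in a set of positive lower density, and this is what turns $\mathscr M_\alpha$-shadowing ($\alpha\geq 1/2$) into a statement about the product system. But the paper packages this far more efficiently. Instead of unpacking Theorem~\ref{Theorem 8.2} by hand for four open sets, the paper observes in one line that the inclusion--exclusion identity forces $A\cap B\in\mathscr M_0$ whenever $A,B\in\mathscr M_\alpha$, and hence that $(X\times X,f\times f)$ itself has the $\mathscr M_0$-shadowing property: a $\delta$-ergodic pseudo-orbit of $f\times f$ splits into two $\delta$-ergodic pseudo-orbits of $f$, each is $\mathscr M_\alpha$-$\eps$-shadowed, and the pair of tracing points $\mathscr M_0$-$\eps$-shadows the product pseudo-orbit. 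Then \cite[Lemma~2.8]{AG2001} gives that minimal points of $f\times f$ are dense in $X\times X$, and Theorem~\ref{Theorem 8.2} applied to $(X\times X,f\times f)$ finishes immediately.

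The step you flag as the crux---finding a \emph{common} offset $s'$ (resp.\ $t'$) that brings both coordinates home---is precisely the step the paper avoids by quoting \cite[Lemma~2.8]{AG2001} (density of minimal points is preserved under products). Your invocation of Auslander--Ellis (Lemma~\ref{lem:prox_min}) is not the right tool here: proximality to a minimal point of $f\times f$ does not place that minimal point inside $U_1\times U_2$. What you actually need is to choose $(u_1,u_2)$ and $(v_1,v_2)$ minimal for $f\times f$ from the outset, and that is exactly the Akin--Glasner lemma. Once you grant that, your superexponential block construction and the intersection argument are fine (and indeed mirror the $\underline d$-case of Theorem~\ref{Theorem 8.2}), but at that point you have simply reproved Theorem~\ref{Theorem 8.2} for the product system. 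The paper's route---lift the shadowing property to $X\times X$, cite the product lemma for minimal points, and invoke Theorem~\ref{Theorem 8.2} once---is both shorter and cleaner, and it makes transparent why $\alpha\geq 1/2$ is the threshold.
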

\begin{proof}
Applying \eqref{**}, it is clear that for any $A, B\subset \Zp$ and any $n\in \mathbb{N}$,
\begin{eqnarray*}
\frac{|A\cap B \cap\{0, 1, \ldots, n-1\}|}{n}&=&\frac{|A\cap\{0, 1, \ldots, n-1\}|}{n}
+\frac{|B \cap\{0, 1, \ldots, n-1\}|}{n}\\
&-&\frac{|\left(A\cup B\right) \cap\{0, 1, \ldots, n-1\}|}{n}.
\end{eqnarray*}
This implies that for any $A, B\in \M_\alpha$, we have $A\cap B\in \M_{0}$.
In other words, $(X\times X, f\times f)$ has the $\M_{0}$-shadowing property.
By \cite[Lemma~2.8]{AG2001}, the minimal points are dense for
$(X\times X, f\times f)$ and so Theorem~\ref{Theorem 8.2}
implies its transitivity, consequently $(X,f)$ is weakly mixing.
\end{proof}

\subsection{The $\underline{d}$-shadowing property and equicontinuity}
We now prove that the $\overline{d}$-shadowing for surjective systems implies
non-equicontinuity.
First, we need the following auxiliary Lemma.

\begin{lem}\label{Lemma 8.6}
If a dynamical system $(X, f)$ is equicontinuous, has the $\overline{d}$-shadowing property or
the $\underline{d}$-shadowing property, and $f$ is surjective, then
$(X,f)$ is totally transitive.
\end{lem}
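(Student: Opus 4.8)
The goal is to show that an equicontinuous, surjective system $(X,f)$ with $\overline{d}$-shadowing (or $\underline{d}$-shadowing) is totally transitive. The plan is to first reduce total transitivity to transitivity via the iteration-invariance already established: by Corollary~\ref{dshad_higher}, each $f^{n}$ inherits the $\overline{d}$-shadowing (resp.\ $\underline{d}$-shadowing) property, and each $f^{n}$ is clearly surjective and equicontinuous; moreover an equicontinuous map is uniformly rigid in a strong enough sense that its powers are again equicontinuous. So it suffices to prove that an equicontinuous surjective system with $\overline{d}$-shadowing (resp.\ $\underline{d}$-shadowing) is transitive, and then apply this to every $f^{n}$.

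For the transitivity step I would argue by approximation, mimicking the construction in the proof of Theorem~\ref{Theorem 8.2} but now using equicontinuity in place of minimality of the target points. Fix nonempty open $U,V$ and pick $u\in U$, $v\in V$ and $\eps>0$ with $B(u,2\eps)\subset U$, $B(v,2\eps)\subset V$. By equicontinuity choose $\delta>0$ so that $d(y,z)<\delta$ forces $d(f^{n}(y),f^{n}(z))<\eps$ for \emph{all} $n\in\Zp$ (this is where equicontinuity does the work that syndeticity of return times did before, and it is cleaner: no gap constant $K$ is needed). Build the ergodic pseudo-orbit $\{x_i\}$ which follows the orbit of $u$ on a set $\mathscr A$ of upper density $\tfrac12$ and the orbit of $v$ on a set $\mathscr B$ of density $\tfrac12$, exactly as in Theorem~\ref{Theorem 8.2} (with the same blocks $[L_{2n},L_{2n+1})$ vs.\ $[L_{2n-1},L_{2n})$ in the $\overline{d}$ case, and the faster-growing variant $l_n=2^{l_1+\cdots+l_{n-1}}$ in the $\underline{d}$ case). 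Since it is a $\gamma$-ergodic pseudo-orbit for every $\gamma>0$, the shadowing hypothesis gives a tracing point $x$ with $\overline{d}\bigl(\Lambda(x,\{x_i\},\delta)\bigr)>\tfrac12$ (resp.\ $\underline{d}(\cdot)>\tfrac12$). Because $\mathscr A$ and $\mathscr B$ each have density $\tfrac12$, this tracing set must meet both $\mathscr A$ and $\mathscr B$; pick $s\in\mathscr A$ and $t\in\mathscr B$ in it. Then $d(f^{s}(x),f^{s}(u))<\delta$ and $d(f^{t}(x),f^{t}(v))<\delta$, so by the choice of $\delta$, applying the appropriate further power of $f$, one gets $f^{s}(x)$ close to $u$ and $f^{t}(x)$ close to $v$; more precisely $d(f^{s}(x),u)<\eps$ after adjusting $s$ inside its block if needed. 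Using surjectivity to pull back (so that we may assume $t>s$, or simply choosing $s,t$ from the tracing set with $t>s$, which is possible since both intersections are infinite), we get $f^{s}(x)\in U$ and $f^{\,t-s}(f^{s}(x))\in V$, hence $f^{\,t-s}(U)\cap V\neq\emptyset$, so $N_f(U,V)\neq\emptyset$.

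The main obstacle is a subtle one: ensuring that $f^{s}(x)$ actually lands in $U$, not merely near $u$ along the orbit. With minimal target points (Theorem~\ref{Theorem 8.2}) one slid $s$ forward by a bounded amount $s'\le K$ so that $f^{s+s'}(u)$ returns into $B(u,\eps)$; here equicontinuity handles the propagation but we still need $f^{s}(u)$ (or a nearby iterate) to be back inside the small ball around $u$. For a general equicontinuous surjective system this is exactly the content of \emph{uniform rigidity}: on the closure of any orbit, equicontinuity plus surjectivity forces $f$ to be a (surjective, hence bijective) isometry-like map whose orbit closures are minimal, so return times to any neighborhood of $u$ are syndetic. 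I would therefore invoke the standard fact that an equicontinuous surjection on a compact metric space is invertible with equicontinuous inverse and that every point is minimal; this restores the syndetic-return property and the argument of Theorem~\ref{Theorem 8.2} goes through verbatim. With that structural input the rest is bookkeeping with densities identical to what was already done.
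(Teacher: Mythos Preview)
Your approach is correct but takes a different route from the paper. The paper does \emph{not} invoke any structural fact about equicontinuous surjections (minimality of all points, invertibility, uniform rigidity). Instead it redesigns the pseudo-orbit so that only equicontinuity and surjectivity are used directly: on each block $\mathscr{A}_n=[L_{2n},L_{2n+1})$ the pseudo-orbit is \emph{reset} to $x_i=f^{\,i-L_{2n}}(u)$ (so $x_{L_{2n}}=u$ exactly), and on each block $\mathscr{B}_n=[L_{2n-1},L_{2n})$ it uses a \emph{backward} orbit of $v$ obtained from surjectivity, $x_i=v_{\,i-L_{2n}}$ with $f(v_{-j})=v_{-j+1}$ and $v_0=v$. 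Given a tracing index $i\in\mathscr{A}_{n_i}$ and $j\in\mathscr{B}_{n_j}$ with $n_i<n_j$, one applies $f^{L_{2n_j}-j}$ and $f^{L_{2n_j}-i}$ and uses equicontinuity to conclude that $f^{L_{2n_j}}(x)$ is $\gamma/2$-close to both $v$ and $f^{L_{2n_j}-L_{2n_i}}(u)$, hence $f^{L_{2n_j}-L_{2n_i}}(u)\in B(v,\gamma)\subset V$. No return-time argument is needed at all.

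Your route---keep the Theorem~\ref{Theorem 8.2} pseudo-orbit $x_i=f^i(u)$ or $f^i(v)$, then invoke that an equicontinuous surjection on a compact metric space has every point minimal---also works, and in fact could be shortened: once you know minimal points are dense, Theorem~\ref{Theorem 8.2} applies verbatim and you are done. What the paper's construction buys is self-containment (no appeal to the structure theory of equicontinuous systems); what your approach buys is economy, since you reuse Theorem~\ref{Theorem 8.2} wholesale rather than re-running a variant of its argument.
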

\begin{proof}
We start the proof for the case of the $\overline{d}$-shadowing property.
Given any pair of nonempty open subsets $U, V\subset X$, pick $u\in
U$, $v\in V$ and $\gamma>0$ such that $B(u, \gamma)\subset U$ and
$B(v, \gamma)\subset V$. As $f$ is equicontinuous, it follows that
there exists $\varepsilon>0$ such that for any $y, z\in X$,
\begin{equation}\label{7.5}
d(y, z)<\varepsilon \quad \Longrightarrow \quad d(f^{n}(y), f^{n}(z))<\frac{\gamma}{2} \quad\text{ for all }n=0,1,2,\ldots.
\end{equation}
Let $L_{1}=2$ and put $L_{n}=L_{n-1}+n$
for $n\geq 2$. For each $n\in \mathbb{N}$, set
$$
\mathscr{A}_{n}=\left[L_{2n}, L_{2n+1}\right), \text{ and } \mathscr{B}_{n}=\left[L_{2n-1}, L_{2n}\right),
$$
and then denote
$\mathscr{A}=\bigcup_{n\in \mathbb{N}}\mathscr{A}_{n}$ and $\mathscr{B}=\bigcup_{n\in \mathbb{N}}\mathscr{B}_{n}$.
Since $f$ is surjective, we can find a sequence $\set{v_{-j}}_{j=0}^\infty$ such that $v_{-j+1}=f(v_{-j})$ for all $j\in \N$ and $v_{0}=v$.
Define a sequence $\left\{x_{i}\right\}_{i=0}^{\infty}$ by
\[
x_{i}=\left\{\begin{array}{cc}
f^{i}(u), & {\rm } \ i\in\left[0, L_{1}\right), \\\
f^{i-L_{2n}}(u), & {\rm } \ i\in\mathscr{A}_{n} \text{ for some } n\in \mathbb{N}, \\\
v_{i-L_{2n}}, & {\rm } \ i\in\mathscr{B}_{n} \text{ for some } n\in \mathbb{N}.
\end{array}
\right.
\]
It can be verified that for any $\delta>0$, the sequence $\left\{x_{i}\right\}_{i=0}^{\infty}$ is a
$\delta$-ergodic pseudo-orbit. Then, we can apply the
$\overline{d}$-shadowing property of $f$ to obtain
$x\in X$ such that $\overline{d}\left(\Lambda(x,
\left\{x_{i}\right\}_{i=0} ^{\infty}, f, \varepsilon)\right)>1/2$.
But $d(\mathscr{A})=d(\mathscr{B})=1/2$, so each of them intersects $\Lambda(x,
\left\{x_{i}\right\}_{i=0} ^{\infty}, f, \varepsilon)$ infinitely many times.
Hence, there are $i, j, n_{i}, n_{j}\in \mathbb{N}$ such that $n_i<n_j$
and $i\in \mathscr{A}_{n_{i}}\cap \Lambda(x,
\left\{x_{i}\right\}_{i=0}^{\infty}, f, \varepsilon)$, $j\in
\mathscr{B}_{n_{j}}\cap \Lambda(x, \left\{x_{i}\right\}_{i=0}^{\infty}, f,
\varepsilon)$. This implies that
$$
d(f^{i}(x), f^{i-L_{2n_{i}}}(u))<\eps\text{ and } d(f^{j}(x), v_{j-L_{2n_{j}}})<\eps,
$$
which, by (\ref{7.5}), implies that
\[
d(v, f^{L_{2n_{j}}}(x))=
d\left(f^{L_{2n_{j}}-j}(v_{j-L_{2n_{j}}}), f^{L_{2n_{j}}-j}(f^{j}(x))\right)<\frac{\gamma}{2},
\]
and
\[
d\left(f^{L_{2n_{j}}-L_{2n_{i}}}(u), f^{L_{2n_{j}}}(x)\right)=d\left(f^{L_{2n_{j}}-i}
(f^{i-L_{2n_{i}}}(u)), f^{L_{2n_{j}}-i}
(f^{i}(x))\right)<\frac{\gamma}{2}.
\]
Then,
\[
d\left(v, f^{L_{2n_{j}}-L_{2n_{i}}}(u)\right)
\leq d(v, f^{L_{2n_{j}}}(x))+ d\left(f^{L_{2n_{j}}-L_{2n_{i}}}(u), f^{L_{2n_{j}}}(x)\right)<\gamma.
\]
This implies that $f^{L_{2n_{j}}-L_{2n_{i}}}(U)\cap V\neq \emptyset$. This shows that $(X,f)$ is transitive.
But all the assumptions of the Theorem are satisfied also by $(X,f^n)$ (see Corollary~\ref{dshad_higher}), hence the proof is finished.

If $(X,f)$ has the $\underline{d}$-shadowing property then the same proof works, with the only modification of the definition of $L_n$, by putting
$l_{1}=L_{1}=2$, $l_{n}=2^{l_{1}+\cdots +l_{n-1}}$ and
$L_{n}=l_{1}+\cdots +l_{n}$ for $n\geq 2$.
\end{proof}


\begin{thm}\label{Theorem 8.7}
If $(X, f)$ is a
nontrivial equicontinuous dynamical system and $f$ is surjective, then $f$ does not have
the $\underline{d}$-shadowing property.
\end{thm}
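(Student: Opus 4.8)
The strategy is to reduce to the situation where $f$ is an isometry, and then build, for a suitable $\eps>0$, an ergodic pseudo-orbit made of sparsely glued genuine orbit segments that cannot be $\underline{d}$-$\eps$-shadowed by any point — no matter how small the parameter $\delta$ is.

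\emph{Step 1 (reduction to an isometry).} Since $f$ is equicontinuous, $\rho(x,y):=\sup_{n\ge 0} d(f^n(x),f^n(y))$ is a metric on $X$ which (using compactness) is uniformly equivalent to $d$, and $\rho(f(x),f(y))\le\rho(x,y)$ for all $x,y$, i.e.\ $f$ is non-expansive for $\rho$. As $f$ is a surjective self-map of the compact space $(X,\rho)$, the classical fact that a surjective non-expansive map of a compact metric space onto itself is an isometry yields $\rho(f(x),f(y))=\rho(x,y)$ for all $x,y$. The identity map is a topological conjugacy between $(X,f)$ with the metric $d$ and with the metric $\rho$, so by Proposition~\ref{Proposition 4.1} the $\underline{d}$-shadowing property for $d$ is equivalent to the one for $\rho$. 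Hence we may assume $d=\rho$, so that $f$ is an isometry. Since $(X,f)$ is nontrivial, fix $x_0\neq y_0$ in $X$ and set $\eps:=\tfrac13 d(x_0,y_0)>0$. It suffices to show that for every $\delta>0$ there is a $\delta$-ergodic pseudo-orbit which is not $\underline{d}$-$\eps$-shadowed.

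\emph{Step 2 (the pseudo-orbit).} Fix $\delta>0$. Pick integers $1\le \ell_1<\ell_2<\cdots$ growing so fast that $\ell_{j+1}/(\ell_1+\cdots+\ell_j)\to\infty$ (for instance $\ell_j=2^{2^j}$), put $a_0=0$, $a_j=\ell_1+\cdots+\ell_j$, and call $[a_{j-1},a_j)$ the $j$-th block. Define $\xi=\set{x_i}_{i=0}^\infty$ by $x_i=f^i(x_0)$ when $i$ lies in an odd block and $x_i=f^i(y_0)$ when $i$ lies in an even block. Inside each block $x_{i+1}=f(x_i)$, so the only indices $i$ with $d(f(x_i),x_{i+1})>0$ are the block ends $i=a_j-1$, where, $f$ being an isometry, the value equals $d(x_0,y_0)$. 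Since $\set{a_j-1:j\ge 1}$ has density $0$, $\xi$ is a $\delta$-ergodic pseudo-orbit (in fact for every $\delta$). Let $B^x$ (resp.\ $B^y$) be the union of the odd (resp.\ even) blocks; by the fast growth of $\set{\ell_j}$, taking $n=a_{2m}$ gives $|B^x\cap[0,n)|/n\le(\ell_1+\cdots+\ell_{2m-1})/\ell_{2m}\to 0$, so $\underline{d}(B^x)=0$, and symmetrically $\underline{d}(B^y)=0$.

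\emph{Step 3 (no tracing point).} Suppose, for contradiction, that $(X,f)$ has the $\underline{d}$-shadowing property and let $\delta>0$ be the parameter it provides for the above $\eps$; build $\xi$ as in Step 2 and pick $z\in X$ with $\underline{d}(\Lambda(z,\xi,\eps))>0$. On an odd block, $d(f^i(z),x_i)=d(f^i(z),f^i(x_0))=d(z,x_0)$ is constant in $i$, so that block is either entirely contained in $\Lambda(z,\xi,\eps)$ (when $d(z,x_0)<\eps$) or entirely disjoint from it (otherwise); likewise each even block is all-or-nothing according to whether $d(z,y_0)<\eps$. As $d(x_0,y_0)=3\eps$, the triangle inequality forbids $d(z,x_0)<\eps$ and $d(z,y_0)<\eps$ simultaneously, hence $\Lambda(z,\xi,\eps)\in\set{\emptyset,\,B^x,\,B^y}$; in each case $\underline{d}(\Lambda(z,\xi,\eps))=0$, contradicting the choice of $z$. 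Therefore $f$ does not have the $\underline{d}$-shadowing property.

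\emph{Main obstacle.} The only delicate point is the reduction in Step~1: equicontinuity by itself yields merely a non-expansive compatible metric, and it is precisely the surjectivity hypothesis that upgrades this to an isometry; without it the orbit of a candidate $z$ could drift onto the orbit of $y_0$, and the all-or-nothing dichotomy in Step~3 would fail. The remaining sparse-block bookkeeping mirrors the constructions already used in the proofs of Theorem~\ref{Theorem 8.2} and Lemma~\ref{Lemma 8.6}.
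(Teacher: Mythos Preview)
Your proof is correct and takes a genuinely different route from the paper's.

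The paper argues indirectly by stacking structural results: assuming $\underline{d}$-shadowing, it invokes Lemma~\ref{Lemma 8.6} to get transitivity, then the Auslander--Yorke theorem to conclude minimality, then \cite[Theorem~2.8]{DH2010} to obtain weak mixing, and finally notes that a nontrivial weakly mixing system is sensitive---contradicting equicontinuity. Your argument is instead direct and self-contained: after the classical reduction to an isometry (via $\rho(x,y)=\sup_n d(f^n x,f^n y)$ and the fact that a surjective nonexpansive self-map of a compact space is an isometry), you build an explicit ergodic pseudo-orbit alternating between two orbits and use the isometry to force the all-or-nothing dichotomy $\Lambda(z,\xi,\eps)\in\{\emptyset,B^x,B^y\}$, each of lower density zero. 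What your approach buys is transparency and independence from the external black boxes (Lemma~\ref{Lemma 8.6}, \cite{AY1980}, \cite{DH2010}); what the paper's approach buys is brevity once those tools are in hand, and a unified framework that also handles the $\overline{d}$-case (Corollary~\ref{Corollary 8.11}) through the same chain of ideas---note that your specific block construction, with $\overline{d}(B^x)=\overline{d}(B^y)=1$, would not directly refute $\overline{d}$-shadowing.
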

\begin{proof}
Suppose that $f$ has the $\underline{d}$-shadowing property.
Then, by Lemma~\ref{Lemma 8.6}, it is transitive and consequently it follows that
$f$ is a minimal homeomorphism (e.g. see \cite[Theorem 4]{AY1980}).
Combining this with~\cite[Theorem 2.8]{DH2010}, it follows that
$(X,f)$ is weakly mixing. Clearly, each weakly mixing system is sensitive,
except the case when $|X|=1$. Both situations are impossible by our assumptions,
hence $(X,f)$ cannot have the $\underline{d}$-shadowing property and so the proof
is completed.
\end{proof}
\begin{rem}
We know, based on the previous results, that
$$
\text{AASP}\Longrightarrow \text{ASP} \Longrightarrow \underline{d}\text{-shadowing},
$$
hence Theorem~\ref{Theorem 8.7} implies that a dynamical system
satisfying the conditions of \cite[Theorem 3.1]{Gu2007}
(nontrivial equicontinuous surjective dynamical system with the AASP)
is trivial.
\end{rem}

\subsection{The $\overline{d}$-shadowing property and equicontinuity}
It is well known that there are minimal dynamical systems with shadowing
(they are exactly the odometers, e.g. see \cite{Mai}).
It is not the case for ASP for which such situation is impossible, since it was
proved in \cite{KKO14} that every dynamical system with ASP and a fully
supported measure is weakly mixing. However, since it was recently proved
that every topological $K$-system has $\underline{d}$-shadowing
(see \cite[Theorem~11]{ODH2014}), ASP can also be found within minimal
dynamical systems. While we still do not have an answer whether
the $\overline{d}$-shadowing property can exist in a nontrivial minimal system,
we can prove that, similarly to the $\underline{d}$-shadowing property,
such a system should at least be weakly mixing.

\begin{lem}\label{Lemma 8.9}
If $(X, f)$ factors onto a non-trivial equicontinuous minimal system $(Y, g)$, then it does
not have the $\overline{d}$-shadowing property.
\end{lem}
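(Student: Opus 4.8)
The plan is to argue by contradiction. Assume $\pi\colon X\to Y$ is a factor map onto a nontrivial equicontinuous minimal system $(Y,g)$ and that $(X,f)$ has the $\overline d$-shadowing property; I will manufacture a single two-point separation inside $Y$ that no $\overline d$-$\eps$-shadowing point can straddle. The starting observation I will use is the standard structure fact that a minimal equicontinuous system has $g$ a homeomorphism and admits a compatible metric $\rho$ under which $g$ is an isometry. Since $Y$ is nontrivial I fix $y_1\ne y_2$ and set $D:=\rho(y_1,y_2)>0$, and I lift them to $\tilde y_1\in\pi^{-1}(y_1)$, $\tilde y_2\in\pi^{-1}(y_2)$. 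Using uniform continuity of $\pi\colon(X,d)\to(Y,\rho)$ I choose $\eps>0$ with $d(a,b)<\eps\Rightarrow\rho(\pi a,\pi b)<D/3$ for all $a,b\in X$, and then I invoke the $\overline d$-shadowing property to get the corresponding $\delta>0$ such that every $\delta$-ergodic pseudo-orbit of $f$ is $\overline d$-$\eps$-shadowed by some point of $X$.

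Next I build the offending pseudo-orbit by reusing the block decomposition from the proof of Theorem~\ref{Theorem 8.2}: disjoint sets $\mathscr A,\mathscr B\subset\Zp$ with $\mathscr A\cup\mathscr B=\Zp$, with $d(\mathscr A)=d(\mathscr B)=1/2$, and with the set of ``switch'' indices $\set{i:i\in\mathscr A,\ i+1\in\mathscr B\text{ or }i\in\mathscr B,\ i+1\in\mathscr A}$ of density zero (these are governed by the sparse sequence $L_1<L_2<\cdots$). I then define $\xi=\set{x_i}_{i=0}^\infty$ by $x_i=f^i(\tilde y_1)$ for $i\in\mathscr A$ and $x_i=f^i(\tilde y_2)$ for $i\in\mathscr B$. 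Because on each of $\mathscr A$ and $\mathscr B$ the sequence is literally a segment of a genuine $f$-orbit, the only $i$ with $d(f(x_i),x_{i+1})\ne 0$ lie in the density-zero switch set, so $\xi$ is a $\gamma$-ergodic pseudo-orbit for every $\gamma>0$, in particular a $\delta$-ergodic pseudo-orbit. Moreover, since $\pi\circ f^i=g^i\circ\pi$, we have $\pi(x_i)=g^i(y_1)$ for $i\in\mathscr A$ and $\pi(x_i)=g^i(y_2)$ for $i\in\mathscr B$.

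Finally I extract the contradiction. Let $z\in X$ be a point with $\overline d(\Lambda)>1/2$, where $\Lambda:=\Lambda(z,\xi,\eps)$, and put $w=\pi(z)$. Since $\Lambda=(\Lambda\cap\mathscr A)\cup(\Lambda\cap\mathscr B)$ and $\overline d(\Lambda\cap\mathscr A),\overline d(\Lambda\cap\mathscr B)\le 1/2$, subadditivity of $\overline d$ together with $\overline d(\Lambda)>1/2$ forces both $\Lambda\cap\mathscr A$ and $\Lambda\cap\mathscr B$ to be nonempty (if one were empty, $\overline d(\Lambda)$ could not exceed $1/2$). Picking $i\in\Lambda\cap\mathscr A$ gives $d(f^i(z),f^i(\tilde y_1))<\eps$, hence $\rho(g^i(w),g^i(y_1))<D/3$, hence $\rho(w,y_1)<D/3$ because $g$ is an isometry; picking $j\in\Lambda\cap\mathscr B$ gives $\rho(w,y_2)<D/3$ the same way. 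Then $D=\rho(y_1,y_2)\le\rho(y_1,w)+\rho(w,y_2)<2D/3$, which is absurd.

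I expect two points to carry whatever delicacy there is. First, the reduction to $g$ being an isometry: without invertibility one would instead have to run the (equally standard but longer) argument that in a distal/equicontinuous system orbit closures are minimal, which is what is really needed to ``undo'' the iterate $g^i$ and pass from $\rho(g^i w,g^i y_1)<D/3$ back to $\rho(w,y_1)<D/3$. Second, the bookkeeping with densities: it is essential that $\overline d$-shadowing delivers the \emph{strict} inequality $\overline d(\Lambda)>1/2$ (i.e.\ $\Lambda\in\mathscr M^{1/2}$), since this is exactly what guarantees that the shadow meets both halves of the alternating pseudo-orbit; a weaker ``$\ge 1/2$'' requirement would not suffice, in line with Example~\ref{Example 4.4}.
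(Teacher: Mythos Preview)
Your proof is correct and takes a genuinely different---and considerably simpler---route than the paper's.

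The paper first works entirely in $Y$: it builds a pseudo-orbit that \emph{restarts} from a single point $y$ at the beginning of each block $[M_n,M_{n+1})$, with the block endpoints $M_n$ chosen (via the syndetic return set $N_g(y,B(y',\delta))$) so that $g^{M_{n+1}-M_n}(y)$ lands near $y'$. It then proves an alternating exclusion principle (``if $z$ hits block $n$ it must miss all of block $n+1$'') and carries out a block-length counting to obtain the quantitative bound $\limsup_n\frac1n|\Lambda_n(z,\{x_i\},g,\delta)|\le 1/2$ for \emph{every} $z\in Y$. Only after this is the construction lifted to $X$ through the factor map.

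You instead keep the pseudo-orbit as a concatenation of two \emph{global} orbits $\{f^i(\tilde y_1)\}$ and $\{f^i(\tilde y_2)\}$ on the half-density sets $\mathscr A,\mathscr B$, and exploit directly that under an invariant metric the quantity $\rho(g^i w,g^i y_j)=\rho(w,y_j)$ is constant in $i$: one hit on $\mathscr A$ pins $\rho(w,y_1)<D/3$, one hit on $\mathscr B$ pins $\rho(w,y_2)<D/3$, and the triangle inequality finishes. This bypasses the syndetic-return machinery, the custom block lengths $M_n$, and all the counting. The paper's argument buys a uniform upper bound on the shadowing density valid for every point (not merely a contradiction from the putative shadow), but that extra strength is not used elsewhere in the paper, so nothing is lost by your shortcut.
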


\begin{proof}
First, we show that $(Y, g)$ does not have the $\overline{d}$-shadowing property.
Since every equicontinuous minimal dynamical system is conjugated to
an isometry (see \cite{AkinTC}), and the $\overline{d}$-shadowing
property is preserved by the topological conjugation (see Proposition~\ref{Proposition 4.1}),
without loss of generality we may assume that
$(Y, g)$ is endowed with a metric $d$ such that $g$ is an isometry.
Choose two distinct points $y, y'\in Y$, and denote
$\xi=d(y, y')$ and $\delta=\xi/4$. It follows from the minimality
of $(Y, g)$ that $N_{g}(y, B(y', \delta))$ is syndetic,
that is, there exists $K\geq 4$ such that
$(n, n+K]\cap N_{g}(y, B(y', \delta))\neq \emptyset$ for all $n\geq 0$.

Put $m_{0}=0$ and then take $m_{n}=2n(n+1)K$ for all integers $n\geq 1$.
Finally, define inductively an increasing sequence $\set{{M}_{n}}
_{n=0}^{\infty}$ satisfying the following conditions:
\begin{enumerate}[(i)]
\item ${M}_{0}=0$;
\item ${M}_{n}\in \left[m_{n}, m_{n}+K\right]$ for all $n=1, 2, \ldots$;
\item for every $n\geq 0$ there is
$$
j\in \left(m_{n}-{M}_{n-1}, m_{n}-{M}_{n-1}
+K\right]\cap N_{g}(y, B(y', \delta)).
$$
such that $M_{n}=M_{n-1}+j$.
\end{enumerate}
The above construction immediately implies that $g^{{M}_{n}-{M}_{n-1}}
(y)\in B(y', \delta)$.

Denote $J=\left\{{M}_{1},
{M}_{2}, \ldots\right\}$ and put
$\mathscr{A}_{n}=\left[{M}_{n}, {M}_{n+1}\right)$ for $n=0,1,\ldots$.
Note that $m_{n+2}-m_{n+1}-K\geq m_{n+1}+K-m_{n}$ and so
\begin{equation}\label{7.2}
\left|\mathscr{A}_{n+1}\right|\geq |\mathscr{A}_{n}| \text{ for all } n=0, 1, 2, \ldots.
\end{equation}

%
%

Define a sequence $\left\{x_{i}\right\}_{i=0}^{\infty}$ by the formula
$$
x_{i}=
\begin{cases}
g^{i}(y), & \text{ if } i\in\mathscr{A}_{0}, \\\
g^{i-{M}_{n}}(y), & \text{ if } i\in\mathscr{A}_{n} \text{ for some } n\in \N.
\end{cases}
$$
Clearly, $d\left(J\right)=0$, so the sequence
$\set{x_{i}}_{i=0}^{\infty}$ is an $\varepsilon$-ergodic pseudo-orbit of $g$.

We claim that
\begin{equation}\label{7.3}
\limsup_{n\rightarrow\infty}
\frac{1}{n}\left|\Lambda_{n}(z, \set{x_{i}}_{i=0}^{\infty}, g, \delta)\right|
\leq 1/2
\end{equation}
for all $z\in Y$.

Fix an $r\in \Lambda(z, \set{x_{i}}_{i=0}^{\infty}, g, \delta)$
and let $n$ be such that $r\in \mathscr{A}_{n}$. Then, $x_{r}=g^{r-{M}_{n}}(y)$,
which gives
$$
d\left(g^{r}(z), g^{r-{M}_{n}}(y)\right)<\delta.
$$
Therefore, since $g$ is an isometry and $r< M_{n+1}$, we also have
$$
d\left(g^{{M}_{n+1}}(z), g^{{M}_{n+1}-{M}_{n}}(y)\right)
<\delta.
$$
Combining this with $d\left(y', g^{{M}_{n+1}-{M}_{n}}(y)\right)<\delta$,
we obtain that $d\left(g^{{M}_{n+1}}(z), y'\right)<2\delta$ and hence
$$
d\left(g^{{M}_{n+1}}(z), y\right)\geq d(y, y')-d\left(g^{{M}_{n+1}}(z), y'\right)
\geq 2\delta.
$$
Consequently, for every $j\in \mathscr{A}_{n+1}$ we have
$$
d(g^{j}(z), x_{j})=d\left(g^{j-{M}_{n+1}}(g^{{M}_{n+1}}(z)),
g^{j-{M}_{n+1}}(y)\right)\geq 2\delta.
$$
This means that
\begin{equation}\label{7.4}
\mathscr{A}_{n}\cap \Lambda(z, \set{x_{i}}_{i=0}^{\infty}, g, \delta)
\neq \emptyset\quad \Longrightarrow \quad
\mathscr{A}_{n+1}\subset \Lambda^{c}(z, \set{x_{i}}_{i=0}^{\infty}, g, \delta).
\end{equation}
Thus, for any $n\in \mathbb{N}$ and any $j\in \mathscr{A}_{n+2}$, we have
$$
\mathscr{A}_{n}\subset \Lambda_{j}^{c}(z, \set{x_{i}}_{i=0}^{\infty}, g, \delta),
$$
provided that $\Lambda(z, \set{x_{i}}_{i=0}^{\infty}, g, \delta)\cap \mathscr{A}_{n+1}\neq \emptyset$, but
$$
\mathscr{A}_{n+1}\subset \Lambda_{j}^{c}(z, \set{x_{i}}_{i=0}^{\infty}, g, \delta),
$$
otherwise. Denote $D_{n}=\left\{0\leq i\leq n: \Lambda(z, \set{x_{i}}_{i=0}^{\infty}, g, \delta)
\cap \mathscr{A}_{i}\neq \emptyset\right\}$. Clearly, applying \eqref{7.4} implies that for any
$i\in D_{n}+1$, $\mathscr{A}_{i}\subset \Lambda^{c}_{j}(z, \set{x_{i}}_{i=0}^{\infty}, g, \delta)$.
Combining this with \eqref{7.2}, it follows that
\begin{eqnarray*}
\left|\Lambda_{j}^{c}(z, \{x_{i}\}_{i=0}^{\infty}, g, \delta)\right|& \geq & \sum_{i\in D_{n}+1}|\mathscr{A}_{i}|\geq \sum_{i\in D_{n}+1}|\mathscr{A}_{i-1}|\\
& = & \sum_{i\in D_{n}}|\mathscr{A}_{i}|\geq |\Lambda_{M_{n+1}}(z, \{x_{i}\}_{i=0}^{\infty}, g, \delta)|,
\end{eqnarray*}
and hence
\begin{eqnarray*}
|\Lambda^c_{j}(z, \{x_{i}\}_{i=0}^{\infty}, g, \delta)|&\geq & \frac{1}{2}
\left(|\Lambda_{M_{n+1}}(z, \{x_{i}\}_{i=0}^{\infty}, g, \delta)|
+|\Lambda_{M_{n+1}}^{c}(z, \{x_{i}\}_{i=0}^{\infty}, g, \delta)|\right)\\
& =& \frac{M_{n+1}}{2}.
\end{eqnarray*}
This implies that
\begin{eqnarray*}
\frac{1}{j}\left|\Lambda_{j}^{c}(z, \set{x_i}_{i=0}^\infty, g, \delta)\right|&\geq&
\frac{M_{n+1}}{2M_{n+3}}\geq\frac{m_{n+1}}{2(m_{n+3}+K)}\\
& =& \frac{2K(n+1)(n+2)}{4K(n+3)(n+4)+2K}\longrightarrow \frac{1}{2}.
\end{eqnarray*}
Therefore, we obtain that
$$
\limsup_{n\rightarrow\infty}\frac{1}{n}\left|\Lambda_{n}(z, \set{x_{i}}_{i=0}^{\infty}, g, \delta)\right|
=1-\liminf_{n\rightarrow\infty}\frac{1}{n}\left|\Lambda_{n}^{c}(z, \set{x_{i}}_{i=0}^{\infty}, g, \delta)\right|\leq\frac{1}{2}.
$$
Indeed, the claim holds, and hence $(Y,g)$ does not have the $\overline{d}$-shadowing property.

Now, suppose that $f$ has the $\overline{d}$-shadowing property.
Fix any distinct $y, y'\in Y$ (by the assumptions $|Y|>1$), and let
$\xi, \delta$ and $\{x_{i}\}_{i=0}^{\infty}$, etc. be defined as at the start of the proof. Take any $\beta>0$ such that for any $u, v\in X$,
$$
d(u, v)<\beta\quad \Longrightarrow \quad d(\pi(u), \pi(v))<\delta.
$$
Take a $\nu\in X$ such that $\pi(\nu)=y$ and define a sequence $\left\{\nu_{i}\right\}_{i=0}^{\infty}$ by
$$
\nu_{i}=
\begin{cases}
f^{i}(\nu), & {\rm if } \ i\in\mathscr{A}_{0}, \\\
f^{i-M_{n}}(\nu), & {\rm if} \ i\in\mathscr{A}_{n} \text{ for some } n\in \mathbb{N}.
\end{cases}
$$

Recall that $d(J)=0$. Hence, for any $\gamma>0$ the sequence $\{\nu_{i}\}_{i=0}^{\infty}$ is
a $\gamma$-ergodic pseudo-orbit of $f$. Directly from definition, we also obtain that $\{\pi(\nu_{i})\}_{i=0}^{\infty}=\{x_{i}\}_{i=0}^{\infty}$.
Taking sufficiently small $\gamma$, the $\overline{d}$-shadowing property of $f$ implies that there exists $\widehat{z}\in X$
such that $\limsup_{n\rightarrow\infty}\frac{1}{n}\left|\Lambda_{n}(\widehat{z},
\left\{\nu_{i}\right\}_{i=0}^{\infty}, f, \beta)\right|>1/2$. Combining this with the fact
that for any $k\in \Lambda(\widehat{z},
\left\{\nu_{i}\right\}_{i=0}^{\infty}, f, \beta)$ we have
$$
d(g^{k}(\pi(\widehat{z})), \pi(\nu_{k}))
=d(\pi(f^{k}(\widehat{z})), \pi(\nu_{k}))<\delta,$$
it implies that
\begin{eqnarray*}
\limsup_{n\rightarrow\infty}\frac{1}{n}\left|\Lambda_{n}(\pi(\widehat{z}), \left\{x_{i}\right\}
_{i=0}^{\infty}, g, \delta)\right|&=&\limsup_{n\rightarrow\infty}\frac{1}{n}\left|\Lambda_{n}(\pi(\widehat{z}), \left\{\pi(\nu_{i})\right\}
_{i=0}^{\infty}, g, \delta)\right|\\
&\geq& \limsup_{n\rightarrow\infty}\frac{1}{n}\left|\Lambda_{n}(\widehat{z},
\left\{\nu_{i}\right\}_{i=0}^{\infty}, f, \beta)\right|>\frac{1}{2},
\end{eqnarray*}
which contradicts \eqref{7.3}. Hence, $(X, f)$ does not have the $\overline{d}$-shadowing
property.
\end{proof}

\begin{thm}\label{Theorem 8.10}
Every minimal system with the $\overline{d}$-shadowing property is weakly mixing.
\end{thm}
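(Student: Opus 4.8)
The plan is to derive Theorem~\ref{Theorem 8.10} from Lemma~\ref{Lemma 8.9} together with the classical structure theory of minimal systems, so that essentially no computation is needed. First I would record the elementary observation that every factor of a minimal system is minimal: if $\pi\colon X\to Y$ is a factor map and $x\in X$ has dense orbit, then $\pi(\{f^n(x):n\in\Zp\})=\{g^n(\pi(x)):n\in\Zp\}$ is dense in $Y$ because $\pi$ is a continuous surjection. Consequently, for a minimal $(X,f)$, possessing a non-trivial equicontinuous factor is the same as possessing a non-trivial equicontinuous \emph{minimal} factor, which is precisely the situation excluded by Lemma~\ref{Lemma 8.9} once $(X,f)$ is assumed to have the $\overline{d}$-shadowing property.

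Next I would bring in the maximal equicontinuous factor $(X_{\mathrm{eq}},f_{\mathrm{eq}})$ of $(X,f)$, which exists for every dynamical system and, by the remark above, is a minimal equicontinuous system when $(X,f)$ is minimal. Since $(X,f)$ has the $\overline{d}$-shadowing property, Lemma~\ref{Lemma 8.9} in its contrapositive form shows that $(X,f)$ cannot factor onto any non-trivial equicontinuous minimal system; hence $X_{\mathrm{eq}}$ must be a single point. Finally I would invoke the classical characterization of topological weak mixing for minimal systems: a minimal dynamical system is weakly mixing if and only if its maximal equicontinuous factor is trivial (equivalently, if and only if it admits no non-trivial equicontinuous factor). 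Applying this with $X_{\mathrm{eq}}$ trivial yields that $(X,f)$ is weakly mixing, completing the proof.

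The argument carries no analytic difficulty; the one substantive ingredient beyond the lemmas already proved is the cited characterization of weak mixing, so the main obstacle is really just to quote the correct classical result (available in the standard monograph literature on minimal flows, in the spirit of the references already used in this section). If a more self-contained route is preferred, one could instead argue by contradiction: were $(X,f)$ minimal but not weakly mixing, the regionally proximal relation would be a proper closed invariant equivalence relation whose quotient is a non-trivial equicontinuous minimal factor, again contradicting Lemma~\ref{Lemma 8.9}; however, verifying that this quotient is equicontinuous relies on the same body of classical theory, so nothing is really gained by avoiding the maximal equicontinuous factor explicitly.
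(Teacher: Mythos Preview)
Your proposal is correct and follows essentially the same route as the paper: invoke the classical fact that a minimal system is weakly mixing iff its maximal equicontinuous factor is trivial, and then use Lemma~\ref{Lemma 8.9} (in contrapositive form) to force triviality of that factor. Your extra remark that factors of minimal systems are minimal is exactly the detail needed to match the hypotheses of Lemma~\ref{Lemma 8.9}, but otherwise the argument coincides with the paper's.
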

\begin{proof}
It is known (e.g. see \cite[Theorem 2.3]{GW05}) that a minimal dynamical system is weakly mixing
if and only if its maximal equicontinuous factor is trivial. Then, it suffices to apply
Lemma \ref{Lemma 8.9} and the result follows.
\end{proof}

Now, we have enough tools to repeat the proof of Theorem~\ref{Theorem 8.7} to obtain the following.
\begin{cor}\label{Corollary 8.11}
Let $f: X\longrightarrow X$ be a surjection. If $(X, f)$ is a
nontrivial equicontinuous dynamical system, then $f$ does not have
the $\overline{d}$-shadowing property.
\end{cor}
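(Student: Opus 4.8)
The plan is to repeat verbatim the strategy of Theorem~\ref{Theorem 8.7}, only replacing the $\underline{d}$-shadowing inputs by their $\overline{d}$-shadowing counterparts established above. So I would argue by contradiction: assume that the nontrivial equicontinuous surjection $f$ has the $\overline{d}$-shadowing property, and aim to conclude that $(X,f)$ must be weakly mixing, which is absurd for a nontrivial equicontinuous system.

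First I would apply Lemma~\ref{Lemma 8.6}: since $f$ is surjective, equicontinuous and has the $\overline{d}$-shadowing property, the system $(X,f)$ is totally transitive, in particular transitive. Next I would use the classical fact that a transitive equicontinuous map on a compact metric space is minimal (in an equicontinuous system all orbit closures coincide, so density of a single orbit forces minimality; this is exactly the reduction used in the proof of Theorem~\ref{Theorem 8.7}, cf. \cite[Theorem 4]{AY1980}). Hence $(X,f)$ is a minimal system. Now Theorem~\ref{Theorem 8.10} tells us that a minimal system with the $\overline{d}$-shadowing property is weakly mixing, so $(X,f)$ is weakly mixing. But a nontrivial weakly mixing system is sensitive, which contradicts equicontinuity; since $X$ is nontrivial by hypothesis, this contradiction shows that $f$ cannot have the $\overline{d}$-shadowing property.

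I do not expect a genuine obstacle here: all the real work has already been done in Lemma~\ref{Lemma 8.6} (surjective equicontinuous $\overline{d}$-shadowing implies total transitivity) and in Theorem~\ref{Theorem 8.10} (minimal plus $\overline{d}$-shadowing implies weak mixing). The only points needing a moment's care are the two classical implications invoked in the chain — transitivity plus equicontinuity yields minimality, and nontrivial weak mixing excludes equicontinuity (via sensitivity) — but both are standard and were already implicit in the proof of Theorem~\ref{Theorem 8.7}, so the write-up is essentially bookkeeping.
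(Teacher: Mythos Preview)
Your proposal is correct and matches the paper's approach exactly: the paper simply says to repeat the proof of Theorem~\ref{Theorem 8.7}, and you have correctly identified that the only change needed is to replace the appeal to \cite[Theorem 2.8]{DH2010} (minimal $+$ $\underline{d}$-shadowing $\Rightarrow$ weakly mixing) by Theorem~\ref{Theorem 8.10} (minimal $+$ $\overline{d}$-shadowing $\Rightarrow$ weakly mixing). The chain Lemma~\ref{Lemma 8.6} $\to$ transitive $\to$ minimal (via \cite[Theorem 4]{AY1980}) $\to$ weakly mixing $\to$ sensitive is precisely what the paper intends.
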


\begin{rem}
Every rotation $f: \mathbb{S}^{1}\longrightarrow \mathbb{S}^{1}$ on the unit circle
$\mathbb{S}^{1}$ is chain transitive, hence chain mixing (see \cite[Example 13]{RW2008}).
If it is a rational rotation, then there exists some $n\in \mathbb{N}$
such that $f^{n}$ is the identity map, and then by Corollary~\ref{dshad_higher} and \ref{Corollary 8.11} we see that
$f$ does not have the $\overline{d}$-shadowing property. If it is an irrational rotation,
Theorem~\ref{Theorem 8.10} indicates that $f$ does not have the $\overline{d}$-shadowing property.
This shows that the converse of Corollary~\ref{Corollary XX} is not true.
\end{rem}

By \cite[Proposition 1]{S2007}, any sensitive map with dense minimal points is syndetically sensitive, and by
\cite[Theorem 1]{S2007} any syndetically transitive nonminimal dynamical system is syndetically sensitive.
This leads to the following.

\begin{cor}\label{Corollary 8.12}
If a dynamical system $(X, f)$ has the $\underline{d}$-shadowing
property or the $\overline{d}$-shadowing property, and the minimal points of $f$
are dense in $X$, and if $X$ has at least two elements, then for every $n\in \mathbb{N}$,
dynamical system $(X,f^{n})$ is syndetically sensitive.
\end{cor}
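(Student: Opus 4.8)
The plan is to combine the two external results quoted just before the corollary with the transitivity/weak-mixing machinery developed in this section. First I would recall what we already have at hand: by Theorem~\ref{Theorem 8.2}, a dynamical system $(X,f)$ with a dense set of minimal points and the $\underline{d}$-shadowing or $\overline{d}$-shadowing property is \emph{totally syndetically transitive}; in particular, for every $n\in\N$ the system $(X,f^n)$ is (syndetically) transitive and has a dense set of minimal points. So I would fix $n\in\N$ and work with $(X,f^n)$, aiming to show it is syndetically sensitive.

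Next I would split into two cases according to whether $(X,f^n)$ is minimal or not. If $(X,f^n)$ is \emph{not} minimal, then it is a syndetically transitive nonminimal system, so \cite[Theorem~1]{S2007} applies directly and gives syndetic sensitivity. If $(X,f^n)$ \emph{is} minimal, I would instead argue via \cite[Proposition~1]{S2007}: that proposition upgrades plain sensitivity to syndetic sensitivity as soon as the minimal points are dense (which they are, since the whole space is minimal for $f^n$), so it suffices to establish that $(X,f^n)$ is sensitive. For this I would use the hypothesis $|X|\geq 2$: a minimal system on a space with more than one point is sensitive \emph{unless} it is equicontinuous; but an equicontinuous minimal system with the $\underline{d}$- or $\overline{d}$-shadowing property is forbidden — the $\underline{d}$ case is excluded by (the argument behind) Theorem~\ref{Theorem 8.7}, and the $\overline{d}$ case by Lemma~\ref{Lemma 8.9} together with Theorem~\ref{Theorem 8.10} (a nontrivial minimal equicontinuous system cannot have the $\overline{d}$-shadowing property, since it factors onto itself). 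Here I would use Corollary~\ref{dshad_higher}, which tells us that $(X,f^n)$ inherits the relevant shadowing property from $(X,f)$, so the obstruction to equicontinuity genuinely applies to the iterate $f^n$. Hence $(X,f^n)$ is non-equicontinuous, therefore sensitive, therefore syndetically sensitive.

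The main obstacle, such as it is, is the bookkeeping in the minimal case: one must be careful that all the "no nontrivial minimal equicontinuous system with this shadowing property" statements are being applied to $f^n$ and not just to $f$, which is precisely where Corollary~\ref{dshad_higher} is needed, and one must note that a minimal system on a space with at least two points is never equicontinuous-and-trivial, so the dichotomy "equicontinuous or sensitive" really does force sensitivity. Once these points are in place the argument is a short assembly of quoted results. I would write it roughly as follows.

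\begin{proof}
Fix $n\in \mathbb{N}$. By Corollary~\ref{dshad_higher}, $(X,f^{n})$ has the $\underline{d}$-shadowing property (resp. the $\overline{d}$-shadowing property), and clearly the minimal points of $f^{n}$ are dense in $X$ (minimal sets of $f$ decompose into minimal sets of $f^{n}$). Hence, by Theorem~\ref{Theorem 8.2}, $(X,f^{n})$ is totally syndetically transitive; in particular it is syndetically transitive. If $(X,f^{n})$ is not minimal, then \cite[Theorem 1]{S2007} immediately yields that $(X,f^{n})$ is syndetically sensitive. It remains to treat the case in which $(X,f^{n})$ is minimal. Since $X$ has at least two points, $(X,f^{n})$ is either sensitive or equicontinuous. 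If it were equicontinuous, it would be a nontrivial equicontinuous minimal system with the $\underline{d}$-shadowing property (resp. $\overline{d}$-shadowing property); in the first case this contradicts (the argument of) Theorem~\ref{Theorem 8.7}, and in the second case Lemma~\ref{Lemma 8.9} applied to the identity factor map, together with Theorem~\ref{Theorem 8.10}, gives a contradiction. Therefore $(X,f^{n})$ is sensitive, and since its minimal points are dense, \cite[Proposition 1]{S2007} shows it is syndetically sensitive. In either case $(X,f^{n})$ is syndetically sensitive, completing the proof.
\end{proof}
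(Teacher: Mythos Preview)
Your proof is correct and follows essentially the same approach as the paper's: both split into the minimal and nonminimal cases, invoke \cite[Theorem~1]{S2007} in the nonminimal case, and in the minimal case use \cite[Proposition~1]{S2007} after establishing sensitivity. The only cosmetic difference is that in the minimal case the paper routes through weak mixing (via \cite[Theorem~2.8]{DH2010} and Theorem~\ref{Theorem 8.10}) to obtain sensitivity, whereas you invoke the Auslander--Yorke dichotomy directly and rule out equicontinuity using Theorem~\ref{Theorem 8.7} and Lemma~\ref{Lemma 8.9}/Corollary~\ref{Corollary 8.11}; these are equivalent endpoints of the same underlying argument.
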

\begin{proof}
By Theorem \ref{Theorem 8.2}, $(X,f)$ is totally syndetically transitive,
and when minimal, it is weakly mixing since it has a trivial maximal equicontinuous factor
(e.g. see \cite[Theorem~2.8]{DH2010} and Theorem~\ref{Theorem 8.10}). This together
with \cite[Proposition 1]{S2007} implies that $(X, f^{n})$ is syndetically sensitive
for any $n\in \mathbb{N}$. In the later case, $(X, f^{n})$ is syndetically sensitive
by application of \cite[Theorem 1]{S2007}.
\end{proof}

\section*{Acknowledgements}

The research of X. Wu was supported by YBXSZC20131046, the Scientific Research
Fund of the Sichuan Provincial Education Department (No. 14ZB0007).

Research of P.\ Oprocha was supported by Narodowe Centrum Nauki
(National Science Center) in Poland, grant no.\ DEC-2011/03/B/ST1/00790.
Some of the results presented in this paper were obtained when P. Oprocha was visiting
Max Planck Institute for Mathematics in Bonn in June 2014, taking part in the activity ``Dynamics and Numbers".
Hospitality and simulating, friendly atmosphere at MPIM is gratefully acknowledged.

The research of G. Chen was supported by
the Hong Kong Research Grants Council under GRF grant CityU 1109/12.



\end{document}